\documentclass{article}

\usepackage{amscd,amsmath,amssymb,amsthm,xcolor,mathrsfs,dsfont,mathtools,setspace}
\setlength{\topmargin}{-0.4in}\setlength{\topskip}{0.2in}    
\setlength{\textheight}{8.5in} 
\setlength{\textwidth}{15cm}    
\setlength{\oddsidemargin}{0truecm} 
\setlength{\evensidemargin}{0.75in} 
\linespread{1.25}

\newcommand{\R}{\mathbb R}
\newcommand{\N}{\mathbb N}

\newcommand{\J}{{\cal J}}

\newcommand\e{{\rm e}}

\DeclareMathOperator*{\esssup}{ess\; sup}
\DeclareMathOperator*{\essinf}{essinf}
\DeclareMathOperator{\supp}{supp}
\newtheorem{corollary}{Corollary}[section]
\newtheorem{theorem}[corollary]{Theorem}
\newtheorem{lemma}[corollary]{Lemma}
\newtheorem{proposition}[corollary]{Proposition}

\theoremstyle{definition}
\newtheorem{definition}[corollary]{Definition}
\newtheorem{remark}[corollary]{Remark}

\numberwithin{equation}{section}

\begin{document}
\begin{spacing}{1.125}
\title{{\bf On existence and multiplicity of solutions for generalized $(p, q)$--Laplacian equations on unbounded domains}
\footnote{The research that led to the present paper was partially supported 
by MIUR--PRIN project ``Qualitative and quantitative aspects of nonlinear PDEs'' (2017JPCAPN\underline{\ }005), {\sl Fondi di Ricerca di Ateneo} 2017/18 ``Problemi differenziali non lineari''.\\
Both the authors are members of the Research Group INdAM--GNAMPA}}

\author{\sc Addolorata Salvatore\\
{\small Dipartimento di Matematica}\\
{\small Università degli Studi di Bari Aldo Moro} \\
{\small Via E. Orabona 4, 70125 Bari, Italy}\\
{\small \it addolorata.salvatore@uniba.it}\\[15pt]
\sc Caterina Sportelli\\
{\small Department of Mathematics and Statistics}\\
{\small University of Western Australia} \\
{\small 35 Stirling Highway, WA 6009 Crawley, Australia}\\
{\small \it caterina.sportelli@uwa.edu.au}}
\date{March 2023}

\maketitle

\vspace{-.5cm}
\begin{abstract}
This paper deals with the existence and multiplicity of solutions for the generalized $(p, q)$--Laplacian equation
\[
\begin{split}
&-{\rm div}(A(x, u)|\nabla u|^{p-2}\nabla u) +\frac1p A_t(x, u)|\nabla u|^p -{\rm div}(B(x, u)|\nabla u|^{q-2}\nabla u) \\
&\quad\qquad+\frac1q B_t(x, u)|\nabla u|^q + V(x)|u|^{p-2} u+ W(x)|u|^{q-2} u= g(x, u)\quad\qquad\mbox{ in } \R^N,
\end{split}
\]
where $1<q\le p< N$, $A, B:\R^N\times\R\to\R$ are suitable $\mathcal{C}^1$--Carath\'eodory functions with $A_t(x, u)=\frac{\partial A}{\partial t}(x, u), B_t(x, u)=\frac{\partial B}{\partial t}(x, u)$, $V, W:\R^N\to\R$ are proper ``weight functions" and $g:\R^N\times\R\to\R$ is a Carath\'eodory map.\\
Notwithstanding the occurrence of some coefficients which rely upon the solution itself makes the use of variational techniques more challenging, under suitable assumptions on the involved functions, we are able to exploit the variational nature of our problem. In particular, the existence of a nontrivial solution is derived via a generalized version of the Ambrosetti–Rabinowitz Mountain Pass Theorem, based on a weaker version of the classical Cerami--Palais--Smale condition. \\
Finally,  the multiplicity result,  which is thoroughly new also even in the simpler case $q=p$,  is gained under symmetry assumptions and a sharp decomposition of the ambient space.
\end{abstract}

\noindent
{\it \footnotesize 2020 Mathematics Subject Classification}. {\scriptsize 35J20, 35J62, 35J92, 47J30, 58E30}.\\
{\it \footnotesize Key words}. {\scriptsize $(p, q)$--Laplacian, weighted Sobolev spaces, weak Cerami--Palais--Smale condition, Ambrosetti--Rabinowitz condition, Mountain Pass theorem,  nontrivial weak bounded solutions, multiple solutions}.
\end{spacing}

\section{Introduction}\label{secintroduction}
In this paper we study the existence of solutions for the generalized $(p, q)$--Laplacian equation
\begin{equation} \label{main}
\begin{split}
&\hspace{-.12in}-{\rm div}(A(x, u)|\nabla u|^{p-2}\nabla u) +\frac1p A_t(x, u)|\nabla u|^p -{\rm div}(B(x, u)|\nabla u|^{q-2}\nabla u) \\
&+\frac1q B_t(x, u)|\nabla u|^q + V(x)|u|^{p-2} u+ W(x)|u|^{q-2} u= g(x, u)\quad\mbox{ in } \R^N,
\end{split}
\end{equation}
where $1<q\le p\le N$, $A, B:\R^N\times\R\to\R$ are $\mathcal{C}^1$--Carath\'eodory functions with $A_t(x, u)=\frac{\partial A}{\partial t}(x, u), B_t(x, u)=\frac{\partial B}{\partial t}(x, u)$, $V, W:\R^N\to\R$ are suitable potentials in a sense discussed later and $g:\R^N\times\R\to\R$ is a Carath\'eodory function. In order to motivate the choice of this problem and highlight the relevance of our results, we present some examples which illustrate the equation covered in this paper.\\
Firstly, we note that if $p=q$ equation \eqref{main} turns into the simpler one
\begin{equation}\label{Ceq}
\begin{split}
-{\rm div}(C(x, u)|\nabla u|^{p-2}\nabla u) &+\frac1p C_t(x, u)|\nabla u|^ p\\
&+ Z(x)|u|^{p-2} u= g(x, u)\quad\mbox{ in } \R^N,
\end{split}
\end{equation}
where we set $C(x, u) =A(x, u)+B(x, u)$ and $Z(x) = V(x) +W(x)$.  The existence of solutions for problem \eqref{Ceq} is related to the existence of solitary waves for the quasilinear Schrödinger equation
\begin{equation}\label{iSchr}
i\partial_t z = -\Delta z - \Delta l(|z|^2) l^{\prime}(|z|^2)z + U(x) z - k(x,|z|)z, 
\end{equation}
with $x\in\R^N$, $t \ge 0$, where the solution $z(x,t)$ is complex in $\R^N\times \R_+$, while $U:\R^N\to\R$, $k:\R^N\times \R_+ \to\R$ and
$l: \R_+ \to \R$ are real functions. Owing to its relevance in several fields of applied sciences, equation \eqref{iSchr} has been widely studied and nowadays still receives a great attention. In fact,  relying to different types of the nonlinear term $l(s)$, it has been derived as a model for different phenomena which come up from plasma physics, fluid mechanics, mechanics and condensed matter theory.  The relation between stationary solutions of \eqref{iSchr} and the model equation \eqref{Ceq} is extensively examined in \cite{CSS2, CSappl}, where a detailed list of further references is proposed, too.\\
Clearly, when $C(x, u)$ is constant,  problem \eqref{Ceq} turns into
\[
-\Delta_p u +Z(x)|u|^{p-2} u =g(x, u)\quad\mbox{ in $\R^N$}
\]
which has been widely investigated in the last decades (see, e.g. \cite{BW, CDS, CePaSo, DiSz, Ra} for the case $p=2$ and \cite{BaGuRo, LZ} for the wider case $p>1$).\\
Moreover, if $p\neq q$, the interest in the study of the equation \eqref{main} is twofold. On the one hand, it is quite challenging from a mathematical viewpoint. On the other hand, equation \eqref{main} has several employments in the applied sciences. 
Observe that if $A(x, u) =B(x, u)\equiv 1$,  \eqref{main} reduces to a classical $(p, q)$--Laplacian equation. In this case, a model of elementary particle physics was studied in \cite{BDF}. 
However, some classical results about $(p,q)$--Laplacian problems in bounded or unbounded domains can be found, for example,  in \cite{AlFi,  Am, BCS, HeLi,MuPa, PaRaRe, PoWa} and references therein.\\
Moreover, we figure it is worthwhile to highlight the correlation with the well known Born-Infeld equation
\begin{equation} \label{bi}
-{\rm div}\left(\frac{\nabla u}{\sqrt{1-\frac{1}{a^2}|\nabla u|^2}}\right) = f(u) \quad\mbox{ in } \R^N,
\end{equation}
with $a\in\R$. 
Equation \eqref{bi} appears quite naturally in several fields such as electromagnetism and in relativity where it represents the mean curvature operator in Lorentz-Minkowski space. 
The relation between \eqref{main} and \eqref{bi} is shown via the first order approximation of the Taylor expansion
\[
\frac{1}{\sqrt{1-x}}= 1+\frac{x}{2}+\frac{3}{8}x^2 +\sum_{k=3}^\infty \binom{k-\frac12}{k} x^k\qquad\mbox{ for } |x|<1,
\]
which makes equation \eqref{bi} turn into
\[
-\Delta u -\frac{1}{2a^2}\Delta_4 u \ = \ f(u) \quad\mbox{ in $\R^N$}
\]
and exhibits a particular case of \eqref{main} with $p=4$, $q=2$, $A(x, u)=\frac{1}{2a^2}$,  $B(x, u)= 1$ and $V(x)=W(x)\equiv 0$. \\
In general,  equation \eqref{main} has been used to model steady--state solutions of reaction--diffusion problems arising in biophysics, in plasma physics and in the study of chemical reaction design. The prototype for these models can be written in the form
\[
-\Delta_p u -\Delta_q +|u|^{p-2} u +|u|^{q-2} u= f(x, u)\quad\mbox{ in $\R^N$},
\]
which originates from a general reaction--diffusion system
\[
u_t = -{\rm div}(D(u)\nabla u) + g(x, u), \qquad \mbox{ with }\quad D(u) =|\nabla u|^{p-2} +|\nabla u|^{q-2}.
\]
In the above mentioned settings, the function $u$ generally stands for a concentration, the term ${\rm div}(D(u)\nabla u)$ corresponds to the diffusion with coefficient $D(u)$, and $g(x, u)$ is the reaction term related to source and loss processes. Typically, in chemical and biological applications, the reaction term $g(x, u)$ is a polynomial of $u$ with variable coefficients (see e.g.  \cite{CI} and references therein).\\
Outlined the range of conceivable applications,  from now on we assume $q<p$ and we consider equation \eqref{main} in its most general formulation. 
As \eqref{main} is settled in $\R^N$, the lack of compactness which occurs makes the classical variational tools difficult to handle.
Here, we do not gain either from symmetry assumptions (see \cite{CaSa2020}) or from concentration compactness arguments (see \cite{HeLi}) but, drawing on the leading work \cite{BF}, we enforce suitable assumption on the potentials $V(x), W(x)$, namely
\begin{equation} \label{miste}
\begin{split}
\essinf_{x\in \R^N}& V(x)>0, \qquad\essinf_{x\in \R^N} W(x)>0,\\
&\lim_{|x|\to +\infty}\int_{B_1(x)}\frac{1}{V(y)}\ dy\ = \ 0,
\end{split}
\end{equation}
where $B_1(x)$ stands for the unitary sphere of $\R^N$ centered in the point $x$. As stated in \cite[Theorem 3.1]{BF}, hypothesis \eqref{miste} assures a compact embedding in suitable weighted Lebesgue spaces on $\R^N$. On the other hand, even if problem \eqref{main} is characterized by the presence of two coefficients which depends on the solution itself,  we give some sufficient conditions for recognizing the variational structure of problem \eqref{main}, so that enquire its solutions shrinks to detect the critical points of the nonlinear functional
\[
\begin{split}
\J(u) &= \frac1p \int_{\R^N} A(x, u)|\nabla u|^p dx +\frac1q \int_{\R^N}B(x, u)|\nabla u|^q dx +\frac1p \int_{\R^N} V(x) |u|^p dx\\
& \quad+\frac1q \int_{\R^N} W(x)|u|^q dx -\int_{\R^N} G(x, u) dx
\end{split}
\]
with $G(x, t) = \int_0^t g(x, s)ds$, in the Banach space
\[
X= W^{1, p}_V(\R^N)\cap W^{1, q}_W(\R^N)\cap L^{\infty}(\R^N).
\]
Among the previously cited problems,  it is also worth noting that our functional $\J$ does not satisfy either the Palais–Smale condition or one of its standard variants in the Banach space $X$. Thus, we are not allowed to use directly existence and multiplicity results as the classical Ambrosetti–Rabinowitz theorems stated in \cite{AR}. Hence,  we need to draw on the pioneer work \cite{CP2} a weaker definition of the Cerami’s variant of Palais–Smale condition (see Definition \ref{wCPS}) which fits our needs.\\
We think that the handling of this definition,  addressed to a wide and challenging framework,  establishes a significant enhancement in this field. In fact, even if we avail of a generalized version of the Mountain Pass Theorem and its symmetric version (Theorems \ref{absMPT} and \ref{SMPT}) already presented in \cite{CPabstract},  we employ both of them in the completely new scenario of a generalized $(p,q)$--Laplacian equation settled in an unbounded domain.\\
Conversly, in view of this framework, we are compelled to assume some hypotheses on the functions $A(x,  t)$ and $B(x, t)$, alongside with the above mentioned \eqref{miste}.  On the one hand,  if we just assume the Carath\'eodory functions $A(x, t)$, $B(x, t)$ are essentially bounded when $t$ is bounded and $g(x, u)$ matches a suitable polynomial growth, we prove that our functional $\J$ is of class $\mathcal{C}^1(X, \R)$.  Thus,
passing through an approximation argument over bounded domains, we prove both the existence of a nontrivial solution and the existence of infinitely many ones, but exploiting also a proper sum decomposition of the weighted Sobolev space $W_V^{1, p}(\R^N)$.  Both of these results require some additional assumptions which we introduce precisely in Section \ref{sec_main}.\\
Now, in order to emphasize the improving of our main results, we present them here in a ``simplified" version. Anyway, we refer the reader to Section \ref{sec_main}, Theorems \ref{ExistMain}, \ref{MoltMain} for a thorough list of all required hypotheses on the concerned functions and a fully detailed statement of these results.
\begin{theorem}
If the potentials $V, W:\R^N\to\R$ satisfy \eqref{miste} and are bounded over bounded sets, if $A, B:\R^N\times\R\to\R$ are $\mathcal{C}^1$--Carath\'eodory functions which properly interact with their partial derivatives and  $g:\R^N\times\R\to\R$ is a Carath\'eodory function which grows no more than $|u|^{p-1} +|u|^{s-1}$ with
\[
p<s<p^*,
\]
has an appropriate behavior in the origin and satisfies the Ambrosetti--\break Rabinowitz condition, thus problem \eqref{main} admits at least one nontrivial weak bounded solution. 
\end{theorem}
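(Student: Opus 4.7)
The plan is to realize weak solutions of \eqref{main} as critical points of the functional $\J$ and to produce a nontrivial one by means of the generalized Mountain Pass Theorem \ref{absMPT} relative to the weak Cerami--Palais--Smale condition of Definition \ref{wCPS}. The preliminary step is to verify that the hypotheses on $A,B,V,W$ together with the polynomial growth of $g$ indeed make $\J$ well defined and of class $\C^1$ on $X$, and that its differential equals the left-hand side of \eqref{main} in the sense of distributions, so that a critical point of $\J$ is automatically a weak solution of the PDE.

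\emph{Mountain-pass geometry.} Since $\essinf V>0$, $\essinf W>0$ and $A,B$ are bounded below by positive constants, the leading part of $\J(u)$ controls from below $\|u\|_{W^{1,p}_V}^p+\|u\|_{W^{1,q}_W}^q$, while the subcritical bound $|g(x,u)|\lesssim |u|^{p-1}+|u|^{s-1}$ coupled with the assumed behaviour of $g$ at the origin produces, via Sobolev embeddings, an estimate $\int_{\R^N}G(x,u)\,dx=o(\|u\|^p)$ as $\|u\|\to 0$. Hence a small sphere of some radius $\rho>0$ exists on which $\J\ge\alpha>0$. Next I would exploit the Ambrosetti--Rabinowitz condition $0<\mu G(x,t)\le tg(x,t)$ (for $|t|$ large, with $\mu>p$) to get $G(x,t)\ge c_1|t|^\mu-c_2$; then for any compactly supported $v\in X\setminus\{0\}$ one has $\J(tv)\to-\infty$ as $t\to+\infty$, producing $e\in X$ with $\|e\|>\rho$ and $\J(e)<0$.

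\emph{Weak Cerami--Palais--Smale condition.} The main obstacle is the verification of wCPS, because of the simultaneous presence of $u$-dependent coefficients, the lack of global compactness on $\R^N$ and the need to stay inside the $L^\infty$-constrained space $X$. Given a Cerami-type sequence $(u_n)\subset X$ with $\J(u_n)\to c$ and $(1+\|u_n\|)\|\J'(u_n)\|_{X'}\to 0$, I would first combine $\mu\J(u_n)-\langle\J'(u_n),u_n\rangle$ with the structural conditions that link $A_t,B_t$ to $A,B$ (precisely the hypotheses that will be listed in Section \ref{sec_main}) to obtain a coercive combination of the principal integrals; from this one infers boundedness of $(u_n)$ in $W^{1,p}_V\cap W^{1,q}_W$. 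After extraction, the compact embedding of \cite[Theorem 3.1]{BF} guaranteed by \eqref{miste} yields $u_n\to u$ in the weighted spaces $L^p_V$, $L^q_W$ and a.e.\ in $\R^N$. The delicate point is to upgrade this to a.e.\ convergence of $\nabla u_n$, which I would obtain by a Boccardo--Murat style argument adapted to the $u$-dependence of the coefficients (testing $\J'(u_n)$ against a suitable truncated difference $u_n-u$), and to secure a uniform $L^\infty$ bound so that $u\in X$. For the latter I would invoke the approximation on bounded balls $B_R(0)$ alluded to in the introduction: on each ball a Moser/De Giorgi--Stampacchia iteration, fed by the already controlled energy norms, produces an $L^\infty$ estimate, and the decay assumption in the second line of \eqref{miste} makes the passage $R\to+\infty$ harmless.

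\emph{Conclusion.} Once wCPS is available, Theorem \ref{absMPT} produces a critical point $u\in X$ at the mountain-pass level $c\ge\alpha>0$. In particular $u\ne 0$, belongs to $L^\infty(\R^N)$ by construction, and is therefore a nontrivial weak bounded solution of \eqref{main}. As anticipated, essentially all the technical difficulty is concentrated in the verification of wCPS, the two critical subtleties being the $u$-dependence of $A,B$ in the passage to the limit of the quasilinear terms and the uniform $L^\infty$ control of the Cerami sequence on the unbounded domain $\R^N$.
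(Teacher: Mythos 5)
Your variational framing, the mountain--pass geometry, and the toolbox you name (the Ambrosetti--Rabinowitz condition for the unbounded direction, the compact embedding of \cite[Theorem 3.1]{BF}, a Boccardo--Murat exponential test function for the gradients, a De Giorgi--Stampacchia iteration for the $L^\infty$ bound) all match the paper's ingredients. The architecture, however, is genuinely different, and the difference conceals a gap. You propose to verify the $(wCPS)$ condition for $\J$ directly on $\R^N$ and then apply Theorem \ref{absMPT} once, on the whole space. The paper never proves $(wCPS)$ for the whole--space functional: it proves it only for the restrictions $\J_{B_k}$ to balls (Proposition \ref{wCPSbdd}), applies the Mountain Pass Theorem on each $B_k$ to obtain exact critical points $u_k$ with $\beta\le\J(u_k)\le\beta^*$ uniformly in $k$ (Proposition \ref{ExistOm}), and then passes to the limit $k\to+\infty$ (Propositions \ref{boundX}, \ref{limwpr}, \ref{critt}), showing at the end that $u_\infty\neq 0$ because the levels stay above $\beta$.

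The gap in your route sits exactly where you locate ``the main obstacle''. A $(CPS)_\gamma$--sequence of $\J$ is bounded in $W_V^{1,p}(\R^N)\cap W_W^{1,q}(\R^N)$ (Proposition \ref{convRN}) but need not be bounded in $L^\infty(\R^N)$, hence not in $\|\cdot\|_X$ --- this is the very reason the \emph{weak} form of the Cerami--Palais--Smale condition is introduced. Your Boccardo--Murat step, testing against $\psi(u_n-u)$ with $\psi(t)=t\e^{\eta t^2}$, needs the uniform bounds $|\psi(v_n)|\le\psi(\bar M_0)$, $0<\psi'(v_n)\le\psi'(\bar M_0)$ and, through $(h_1)$, uniform control of $A_t(x,u_n)$ and $B_t(x,u_n)$; all of these require $\sup_n|u_n|_\infty<+\infty$, which is not available for a general Cerami sequence. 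On bounded domains the paper resolves this by first proving only that the \emph{limit} $u$ lies in $L^\infty$ (Step 2 of Proposition \ref{wCPSbdd}) and then replacing $u_n$ by the truncations $T_k u_n$, showing $\|d\J_{\Omega}(T_k u_n)\|_{X'_\Omega}\to 0$ (Step 3), before running the convexity argument on the truncated sequence; your sketch omits this truncation step entirely. On $\R^N$ the paper avoids the problem altogether: the uniform $L^\infty$ bound of Proposition \ref{boundX} is derived for the \emph{exact} solutions $u_k$ from the identity $\langle d\J(u_k),R_m^+u_k\rangle=0$ together with Lemma \ref{tecnico}, with constants independent of $k$, and only afterwards is the exponential test function deployed in Proposition \ref{limwpr}. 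To rescue your direct route you would have to either establish a uniform $L^\infty$ bound for arbitrary Cerami sequences (which is not to be expected for this class of functionals) or insert the $T_k$--truncation and redo the superlevel--set iteration on $\R^N$; the paper's detour through balls is precisely the device that makes these estimates available with domain--independent constants. A minor additional imprecision: near the origin $(g_3)$ only gives $\int_{\R^N}G(x,u)\,dx\le \frac{\bar\alpha+\eps}{p}\tau_{p,V}^p\|u\|_V^p+o(\|u\|_V^p)$ with $\bar\alpha$ possibly positive, not $o(\|u\|^p)$; the geometry works because $\bar\alpha\tau_{p,V}^p<\alpha_0$, not because the nonlinear term is negligible.
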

It is worthwhile pointing out that the multiplicity result we provide here is entirely novel also in the case $q=p$, i.e., for the equation stated in \eqref{Ceq}.
\begin{theorem} 
In the previous assumptions,  if the requirement about the behaviour of $g$ in the origin is replaced with the assumption that $g(x, \cdot)$ is odd, then problem \eqref{main} has infinitely many weak bounded solutions whose critical levels positively diverge.
\end{theorem}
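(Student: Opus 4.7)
The plan is to apply the symmetric Mountain Pass Theorem (Theorem \ref{SMPT}) to the functional $\J$ on $X$. Since $g(x,\cdot)$ is odd, its primitive $G(x,\cdot)$ is even, so $\J$ is an even $\mathcal{C}^1$ functional with $\J(0)=0$. The weak Cerami--Palais--Smale condition (Definition \ref{wCPS}) already established in the proof of the existence theorem extends to every level $c\in\R$ via the same strategy: one bounds Cerami sequences through the Ambrosetti--Rabinowitz condition, and then passes to the limit by combining the approximation-over-balls procedure with the compact embeddings furnished by \eqref{miste}. Thus the abstract framework of Theorem \ref{SMPT} is in place, and the heart of the proof becomes the verification of two geometric conditions on $\J$: an upper bound on a family of finite-dimensional subspaces $V_n\subset X$, and a diverging sequence of positive lower bounds on a complementary family of closed subspaces $Y_n\subset X$ of finite codimension.

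The upper bound is the easier half. I would pick an increasing sequence $(V_n)\subset X$ of $n$-dimensional subspaces contained in $\mathcal{C}_c^\infty(\R^N)$, which is legitimate because $V,W$ are bounded on bounded sets. On each $V_n$ all norms are equivalent, so combining the essential boundedness of $A(x,\cdot),B(x,\cdot)$ on bounded intervals with the AR--type inequality $G(x,t)\ge c_1|t|^\mu-c_2$ ($\mu>p$) yields
\[
\J(u) \;\le\; c_3\|u\|_X^p + c_4 - c_5\|u\|_X^\mu \;\longrightarrow\; -\infty \qquad \text{as } \|u\|_X\to\infty,\ u\in V_n,
\]
since $\mu>p>q$; this gives the radius $R_n$ beyond which $\J$ is negative on $V_n$.

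The hard part is the second geometric condition, and here lies the ``sharp decomposition of the ambient space'' announced in the abstract. Since $X$ is the intersection of three Banach spaces with very different structures, one cannot simply import a Schauder decomposition of $W^{1,p}_V(\R^N)$: its complementary subspaces may fail to sit inside $W^{1,q}_W(\R^N)\cap L^\infty(\R^N)$. My plan is to construct $Y_n$ through a spectral-type decomposition of $W^{1,p}_V(\R^N)$ driven by the compact embedding into a weighted Lebesgue space provided by \eqref{miste}, and then to intersect with $X$ via a density-and-truncation procedure, using a subcritical interpolation inequality to transfer information from the $W^{1,p}_V$-component to the other two norms. Once a small sphere in $Y_n$ is found on which the kinetic terms dominate the negative contribution $-\int G(x,u)\,dx$, a positive lower bound $\alpha_n\to+\infty$ emerges. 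Theorem \ref{SMPT} then delivers an unbounded sequence of critical levels $(c_n)$ with associated critical points $u_n\in X$; these are weak solutions of \eqref{main} and automatically belong to $L^\infty(\R^N)$ by the very definition of $X$, delivering the claimed sequence of infinitely many bounded weak solutions with $\J(u_n)\to+\infty$.
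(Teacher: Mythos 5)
Your plan hinges on applying Theorem \ref{SMPT} directly to $\J$ on $X$ over $\R^N$, and therefore on $\J$ satisfying the $(wCPS)$ condition in $]0,+\infty[$. That is a genuine gap: the paper never establishes $(wCPS)$ for $\J$ on the whole space, only for the truncated functionals $\J_\Omega$ on bounded domains (Proposition \ref{wCPSbdd}), and this is not an omission you can wave away ``via the same strategy.'' The two hard steps of the bounded--domain verification do not transfer. First, the proof that the weak limit of a Cerami sequence lies in $L^\infty$ rests on the test functions $R_k^\pm u_n$ together with Lemma \ref{tecnico} (i.e.\ \cite[Lemma 4.5]{CP2}), which is formulated on a bounded $\Omega$. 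Second, and more seriously, the strong $W^{1,p}_V\cap W^{1,q}_W$ convergence needed to pass to the limit in the quasilinear terms is obtained (Proposition \ref{limwpr}) by testing with $\psi(v_k)=v_k\,\e^{\eta v_k^2}$; for a mere $(CPS)_\gamma$--sequence one only knows $\|d\J(u_n)\|_{X'}(1+\|u_n\|_X)\to 0$, and since $\|\psi(v_n)\|_X$ grows exponentially in $|u_n|_\infty$ — which is \emph{not} a priori bounded along a Cerami sequence — the pairing $\langle d\J(u_n),\psi(v_n)\rangle$ cannot be shown to vanish. Your proposal also conflates this with ``the approximation-over-balls procedure,'' but that procedure is not part of any $(wCPS)$ proof: it is the paper's \emph{substitute} for one.

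The paper's actual route is structurally different and avoids the issue entirely. For each fixed $\beta>0$ it applies Theorem \ref{SMPT} to $\J_{B_k}$ on $X_{B_k}$ (Proposition \ref{MoltOm}), using Propositions \ref{geo1} and \ref{PropMolt} for the geometry — the decomposition $X=Y^j\oplus Z^j$ with $Z^j=Z_V^j\cap X$ being exactly the ``sharp decomposition'' you allude to, and the lower bound on $Z^j$ being achieved on a \emph{large} sphere $\|u\|_V=\varrho$, not a small one. This produces exact critical points $u_k$ of $\J_{B_k}$ with $\beta\le\J(u_k)\le\beta^*$ uniformly in $k$; because they are exact critical points, the uniform $L^\infty$ bound (Proposition \ref{boundX}), the strong convergence (Proposition \ref{limwpr}) and the identification of the limit as a critical point of $\J$ (Proposition \ref{critt}) all go through, yielding a solution at level $\ge\beta$. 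Letting $\beta\to+\infty$ gives the diverging sequence of critical levels. As a secondary point, your upper bound $\J(u)\le c_3\|u\|_X^p+c_4-c_5\|u\|_X^\mu$ on $V_n$ is not quite right either: by Remark \ref{Rmk<} the coefficient $A(x,u)$ may grow like $|u|_\infty^{\mu-p-\alpha_1}$, so the positive part is of order $\|u\|_X^{\mu-\alpha_1}$; the conclusion $\J\to-\infty$ survives (this is Proposition \ref{geo1}, proved by a Fatou argument), but your displayed inequality as written is not justified.
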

The paper is organized as follows. In Section \ref{abstractsection} we introduce the weak--Cerami--Palais--Smale condition and some related abstract results to which we refer for the statement of our main results. In Section \ref{sec_var} we give some first hypotheses on the functions $A(x, u), B(x, u), G(x, u)$, on the potentials $V(x), W(x)$ and, in particular, we formulate a variational principle for problem \eqref{main}.
In Section \ref{sec_main} we state our existence and multiplicity results,  provide some further assumptions on the involved functions and test some geometric properties. In Section \ref{sec_bounded} we rephrase problem \eqref{main} over bounded domains, upon which we prove the existence of solutions. Finally, in Section \ref{sec_proof} we prove our main results.

\section{Abstract setting}\label{abstractsection}

Throughout this section, we assume that:
\begin{itemize}
\item $(X, \|\cdot\|_X)$ is a Banach space with dual 
$(X',\|\cdot\|_{X'})$;
\item $(S,\|\cdot\|_S)$ is a Banach space such that
$X \hookrightarrow S$ continuously, i.e., $X \subset S$ and a constant $\sigma_0 > 0$ exists
such that
\[
\|\xi\|_S \ \le \ \sigma_0\ \|\xi\|_X\qquad \hbox{for all $\xi \in X$;}
\]
\item $J : {\mathcal D} \subset S \to \R$ and $J \in C^1(X,\R)$ with $X \subset {\mathcal D}$.
\end{itemize}

In order to avoid any
ambiguity and simplify, when possible, the notation, 
from now on by $X$ we denote the space equipped with
its given norm $\|\cdot\|_X$ while, if the norm $\Vert\cdot\Vert_{S}$ is involved,
we write it explicitly.

For simplicity, taking $\gamma \in \R$, we say that a sequence
$(\xi_n)_n\subset X$ is a {\sl Cerami--Palais--Smale sequence at level $\gamma$},
briefly {\sl $(CPS)_\gamma$--sequence}, if
\[
\lim_{n \to +\infty}J(\xi_n) = \gamma\quad\mbox{and}\quad 
\lim_{n \to +\infty}\|dJ\left(\xi_n\right)\|_{X'} (1 + \|\xi_n\|_X) = 0.
\]
Moreover, $\gamma$ is a {\sl Cerami--Palais--Smale level}, briefly a {\sl $(CPS)$--level}, 
if there exists a $(CPS)_\gamma$ -- sequence.

As $(CPS)_\gamma$ -- sequences may exist which are unbounded in $\|\cdot\|_X$
but converge with respect to $\|\cdot\|_S$,
we have to weaken the classical Cerami--Palais--Smale 
condition in a suitable way according to the ideas already developed in 
previous papers (see, e.g., \cite{CPabstract}).  

\begin{definition} \label{wCPS}
The functional $J$ satisfies the
{\slshape weak Cerami--Palais--Smale 
condition at level $\gamma$} ($\gamma \in \R$), 
briefly {\sl $(wCPS)_\gamma$ condition}, if for every $(CPS)_\gamma$--sequence $(\xi_n)_n$,
a point $\xi \in X$ exists, such that 
\begin{description}{}{}
\item[{\sl (i)}] $\displaystyle 
\lim_{n \to+\infty} \|\xi_n - \xi\|_S = 0\quad$ (up to subsequences),
\item[{\sl (ii)}] $J(\xi) = \gamma$, $\; dJ(\xi) = 0$.
\end{description}
If $J$ satisfies the $(wCPS)_\gamma$ condition at each level $\gamma \in I$, $I$ a real interval, 
we say that $J$ satisfies the $(wCPS)$ condition in $I$.
\end{definition}

By using Definition \ref{wCPS}, the following generalization of the Mountain Pass Theorem can be stated (see \cite[Theorem 1.7]{CPabstract}).

\begin{theorem}[Mountain Pass Theorem] \label{absMPT}
Let $J\in C^1(X,\R)$ be such that $J(0) = 0$
and the $(wCPS)$ condition holds in $\R$.
Moreover, assume that two constants
$\varrho$, $\beta > 0$ and a point $e \in X$ exist such that
\[
u \in X, \; \|u\|_S = \varrho\quad \text{ implies }\quad J(u) \ge \beta,
\]
\[
\|e\|_S > \varrho\qquad\hbox{and}\qquad J(e) < \beta.
\]
Then $J$ has a critical point $u_X \in X$ such that 
\[
J(u_X) = \inf_{\gamma \in \Gamma} \sup_{\sigma\in [0,1]} J(\gamma(\sigma)) \ge \beta
\]
with $\Gamma = \{ \gamma \in C([0,1],X):\, \gamma(0) = 0,\; \gamma(1) = e\}$.
\end{theorem}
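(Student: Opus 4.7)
The plan is to follow the classical minimax scheme, adapted to the auxiliary norm $\|\cdot\|_S$ governing the geometric separation and to the weaker compactness information supplied by the $(wCPS)$ condition. Set
\[
c \ = \ \inf_{\gamma\in\Gamma}\sup_{\sigma\in[0,1]} J(\gamma(\sigma)).
\]
The set $\Gamma$ is nonempty, as it contains the segment $\sigma\mapsto\sigma e$, and for every $\gamma\in\Gamma$ the composition $\sigma\mapsto J(\gamma(\sigma))$ is continuous on the compact interval $[0,1]$; hence $c$ is well defined in $\R\cup\{-\infty\}$.

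First I would check that $c\ge\beta>0$. Given any $\gamma\in\Gamma$, the continuous embedding $X\hookrightarrow S$ makes $\sigma\mapsto\|\gamma(\sigma)\|_S$ a continuous real-valued function satisfying $\|\gamma(0)\|_S=0<\varrho<\|e\|_S=\|\gamma(1)\|_S$. The intermediate value theorem supplies some $\sigma^\ast\in(0,1)$ with $\|\gamma(\sigma^\ast)\|_S=\varrho$, whence the first geometric hypothesis forces $J(\gamma(\sigma^\ast))\ge\beta$. Taking the supremum in $\sigma$ and then the infimum over $\gamma$ yields $c\ge\beta>0$, so in particular $c\in\R$ and Definition \ref{wCPS} may be applied at this level.

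Next I would produce a Cerami--Palais--Smale sequence in $X$ at level $c$. The standard route is to apply Ekeland's variational principle, on the complete metric space $(\Gamma,d)$ endowed with $d(\gamma_1,\gamma_2)=\sup_\sigma\|\gamma_1(\sigma)-\gamma_2(\sigma)\|_X$, to the continuous functional $\Phi:\Gamma\to\R$ defined by $\Phi(\gamma)=\sup_\sigma J(\gamma(\sigma))$. To build the Cerami weight $(1+\|\xi\|_X)$ into the bound on $dJ$ one either replaces $d$ by a suitable weighted metric or constructs a pseudo--gradient deformation whose vector field is scaled by $(1+\|\xi\|_X)$: this is the classical device that upgrades a plain Palais--Smale bound to a Cerami one. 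The peak points $\xi_n=\gamma_n(\sigma_n)$ of the near-optimal paths produced in this way then satisfy $J(\xi_n)\to c$ together with $\|dJ(\xi_n)\|_{X'}(1+\|\xi_n\|_X)\to 0$.

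Finally, I would invoke the $(wCPS)_c$ condition on $(\xi_n)_n$: Definition \ref{wCPS} supplies $\xi\in X$ with $J(\xi)=c\ge\beta$ and $dJ(\xi)=0$, which is the required critical point $u_X$. The genuine obstacle is the third step: on the one hand, Ekeland's principle on $(\Gamma,d)$ gives only a $(PS)$--type bound, so obtaining the stronger Cerami growth requires the weighted deformation, which is delicate in this abstract setting; on the other hand, since the geometric separation is measured in the weaker norm $\|\cdot\|_S$, one must also verify that paths may be deformed in $\|\cdot\|_X$ without destroying the $\|\cdot\|_S$--estimates at the endpoints, which is guaranteed by the continuity of $X\hookrightarrow S$ together with a cutoff localizing the deformation away from $0$ and $e$.
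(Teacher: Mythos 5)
The paper does not prove this theorem at all: it is imported verbatim from \cite[Theorem 1.7]{CPabstract}, so there is no internal argument to compare against. Judged on its own, your outline has the same architecture as the proof in that reference, and your first step is complete and correct: the continuity of $X\hookrightarrow S$ makes $\sigma\mapsto\|\gamma(\sigma)\|_S$ continuous, the intermediate value theorem produces a crossing of the sphere $\|\cdot\|_S=\varrho$, and $c\ge\beta$ follows. This is indeed the only place where the auxiliary norm plays a role. The final step (feeding a $(CPS)_c$--sequence into Definition \ref{wCPS} to extract $u_X$ with $J(u_X)=c$ and $dJ(u_X)=0$) is also correct as stated.

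The genuine gap is the one you flag yourself: the production of the $(CPS)_c$--sequence, equivalently the deformation lemma, is gestured at rather than carried out, and you do not articulate the single observation that makes it work in this weak setting. Namely: if $J$ had no critical point at level $c$, then by $(wCPS)_c$ there could be no $(CPS)_c$--sequence whatsoever, and hence $\bar\varepsilon,\delta>0$ would exist with $\|dJ(u)\|_{X'}(1+\|u\|_X)\ge\delta$ whenever $|J(u)-c|\le\bar\varepsilon$. This uniform weighted lower bound is exactly what is needed to run a pseudo--gradient flow with vector field scaled by $(1+\|u\|_X)$ (bounded trajectories on finite time, hence a well--defined deformation of $J^{c+\varepsilon}$ into $J^{c-\varepsilon}$), contradicting the definition of $c$. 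Note also that your worry about preserving $\|\cdot\|_S$--estimates at the endpoints is misplaced: the deformation must fix $0$ and $e$, and it does so because $J(0)=0<\beta\le c$ and $J(e)<\beta\le c$ place both endpoints in $J^{c-\bar\varepsilon}$ for $\bar\varepsilon$ small, so the standard cutoff in the $J$--values (not in $S$) suffices. With that observation supplied, your scheme closes; without it, the middle step remains an appeal to ``the classical device'' rather than a proof.
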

Finally, with the stronger assumption that $J$ is symmetric, the following multiplicity result can be provided too (see \cite[Theorem 1.8]{CPabstract}).
\begin{theorem}\label{SMPT}
Let $J\in C^1(X, \R)$ be an even functional such that $J(0)=0$ and $(wCPS)$ holds in $]0, +\infty[$. Assume that $\beta>0$ exists such that
\begin{enumerate}
\item[$(A_{\varrho})$] two closed subspaces $E_{\beta}$ and $Z_{\beta}$ of $X$ exist such that
\[
E_{\beta} + Z_{\beta}=X,\qquad {\rm codim} Z_{\beta} < {\rm dim} E_{\beta}<+\infty,
\]
and $J$ satisfies the following assumptions.
\begin{enumerate}
\item[(i)] A constant $\varrho>0$ exists such that
\[
u\in Z_{\beta}, \; \|u\|_S =\varrho\quad\text{ implies }\quad J(u)\ge\beta;
\]
\item[(ii)] A constant $R>0$ exists such that
\[
u\in E_{\beta}, \; \|u\|_X\ge R\quad\text{ implies }\quad J(u)\le 0,
\]
hence $\displaystyle\sup_{u\in E_{\beta}} J(u)<+\infty$.
\end{enumerate}
\end{enumerate}
Then the functional $J$ possesses at least a pair of symmetric critical points in $X$ whose corresponding critical level belongs to $[\beta, \beta_1]$, with
\[
\beta_1=\displaystyle\sup_{u\in E_{\beta}} J(u) >\beta.
\]
\end{theorem}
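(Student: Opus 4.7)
The plan is to realize a critical value of $J$ as a minimax over a class of odd continuous deformations, using a Borsuk--Ulam/$\Z_2$-genus argument to force linking between $\partial Q$ (where $Q := \{u \in E_\beta : \|u\|_X \le R\}$) and $\{u \in Z_\beta : \|u\|_S = \varrho\}$, and then invoking an equivariant deformation lemma compatible with $(wCPS)$ to guarantee that the minimax is attained.

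Concretely, I would set
\[
\Lambda = \bigl\{h \in C(X,X) : h \text{ is odd and } h|_{\partial Q} = \operatorname{id}\bigr\}, \qquad c = \inf_{h \in \Lambda} \sup_{u \in Q} J(h(u)).
\]
Since $\operatorname{id} \in \Lambda$, since $Q$ is compact (closed and bounded in the finite-dimensional space $E_\beta$), and since hypothesis $(ii)$ bounds $J$ above on $E_\beta$, one immediately has $c \le \beta_1 := \sup_{E_\beta} J < +\infty$. By enlarging $R$ if necessary (which preserves $(ii)$), one may also assume $c_0 R > \varrho$, where $c_0$ quantifies the equivalence between $\|\cdot\|_X$ and $\|\cdot\|_S$ on the finite-dimensional space $E_\beta$.

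The lower bound $c \ge \beta$ is the core topological step. Fix $h \in \Lambda$ and let $P : X \to X/Z_\beta$ be the canonical projection onto the finite-dimensional quotient, of dimension $k := \codim Z_\beta < d := \dim E_\beta$. The odd continuous map $\Phi := P \circ h \colon Q \to X/Z_\beta$ coincides with $P$ on $\partial Q$, so a standard Borsuk--Ulam/$\Z_2$-genus estimate gives that $\Phi^{-1}(0) \subset Q$ has $\Z_2$-genus at least $d - k \ge 1$. Combining this with the continuity of $u \mapsto \|h(u)\|_S$, which vanishes at $u = 0$ and exceeds $\varrho$ somewhere on $\partial Q$, a continuity argument along $\Phi^{-1}(0)$ produces $u^\star \in Q$ with $h(u^\star) \in Z_\beta$ and $\|h(u^\star)\|_S = \varrho$. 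Hypothesis $(i)$ then yields $J(h(u^\star)) \ge \beta$, so $\sup_Q J \circ h \ge \beta$ and $c \ge \beta$.

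Finally, suppose for contradiction that $c$ is not a critical value of $J$. An odd version of the deformation lemma, derived from $(wCPS)_c$, produces for small $\eps > 0$ an odd $\eta \in C(X,X)$ that equals $\operatorname{id}$ on $\{J \le 0\} \supset \partial Q$ and satisfies $\eta(\{J \le c + \eps\}) \subset \{J \le c - \eps\}$. Composing with a near-optimal $h \in \Lambda$ gives $\eta \circ h \in \Lambda$ with $\sup_Q J \circ (\eta \circ h) \le c - \eps$, contradicting the definition of $c$. Evenness of $J$ upgrades the resulting critical point to a symmetric pair at level $c \in [\beta, \beta_1]$. The main obstacle is precisely this equivariant deformation lemma under $(wCPS)$: the pseudogradient field has to be symmetrized (via averaging under $u \mapsto -u$), and since $(CPS)_c$ sequences are only controlled in the weaker norm $\|\cdot\|_S$, the vector field must also be weighted so that flow lines remain in $X$ and still produce a genuine $(CPS)_c$ sequence whose $(wCPS)$-limit is the sought critical point at level $c$.
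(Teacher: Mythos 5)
First, a point of order: the paper does not actually prove this theorem --- it is imported verbatim from \cite[Theorem 1.8]{CPabstract}, so there is no in-paper proof to compare against. Judged on its own merits, your skeleton (minimax over odd maps fixing $\partial Q$, a genus-based intersection property to get the lower bound $c\ge\beta$, an equivariant deformation lemma to show $c$ is critical) is the standard architecture for this kind of symmetric linking theorem, and it is essentially the architecture of the cited Candela--Palmieri proof. However, two steps as you have written them do not go through.

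The first is the intersection step. You deduce that $\Phi^{-1}(0)=\{u\in Q: h(u)\in Z_\beta\}$ has genus at least $d-k\ge 1$ and then run ``a continuity argument along $\Phi^{-1}(0)$'' to find $u^\star$ with $\|h(u^\star)\|_S=\varrho$. A genus bound gives no connectedness whatsoever ($\Phi^{-1}(0)$ can be badly disconnected), so the intermediate-value step is unjustified; moreover the Borsuk--Ulam genus estimate you invoke is a statement about odd maps on the \emph{boundary} of a symmetric neighbourhood, not on all of $Q$. The standard repair is to work instead with $O=\{u\in \mathrm{int}\,Q:\ \|h(u)\|_S<\varrho\}$: after enlarging $R$ so that $\|u\|_S>\varrho$ on $\partial Q\cap E_\beta$, one checks $\partial O\subset \mathrm{int}\,Q$ and $\|h(u)\|_S=\varrho$ on $\partial O$; since $O$ is a bounded symmetric open neighbourhood of $0$ in $E_\beta$, its relative boundary has genus $\dim E_\beta=d$, hence so does the compact symmetric set $h(\partial O)\subset\{\|\cdot\|_S=\varrho\}$, which must therefore meet $Z_\beta$ (otherwise $P$ would map it oddly into $\R^{k}\setminus\{0\}$ with $k<d$). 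That yields $J(h(u^\star))\ge\beta$ cleanly. The second, and more serious, issue is that the odd deformation lemma compatible with $(wCPS)$ is simply asserted. In this two-norm setting it is the entire content of the theorem: one must build a deformation that is continuous and $X$-valued in $\|\cdot\|_X$ even though $(CPS)_c$-sequences are only controlled in $\|\cdot\|_S$, and verify that it can be taken odd and equal to the identity on $\{J\le 0\}$. You correctly flag this as ``the main obstacle,'' but flagging it does not discharge it; as it stands the proposal reduces the theorem to an unproved lemma, which is exactly the lemma established in \cite{CP2, CPabstract}. With the intersection step repaired and the $(wCPS)$-deformation lemma supplied (or explicitly cited), the argument closes.
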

\section{Variational setting and preliminary assumptions} \label{sec_var}
Let $\N = \{1,2,\dots\}$ be the set of the strictly positive integers
and, taking any $\Omega$ open subset of $\R^N$ with smooth boundary, $N\ge 2$, we denote by
\begin{itemize}
\item $B_R(x) = \{y\in\R^N : |y-x|< R\}$ the open ball with center in 
$x\in \R^N$ and radius $R>0$;
\item $|D|$ the usual $N$--dimensional Lebesgue measure of a measurable set $D$ in $\R^N$;
\item $d_i$ any strictly positive and each time different constant;
\item $(L^r(\Omega),|\cdot|_{\Omega,r})$ the classical Lebes\-gue space with
norm
\[
|u|_{\Omega,r} = \left(\int_{\Omega}|u|^r dx\right)^{1/r}
\]
if $1\le r<+\infty$;
\item $(L^\infty(\Omega),|\cdot|_{\Omega,\infty})$ the space of the Lebesgue--measurable 
essentially bounded functions endowed with norm $\displaystyle |u|_{\Omega,\infty} = \esssup_{\Omega} |u|$;
\item $W^{1,r}(\Omega)$ and $W_0^{1,r}(\Omega)$ the classical Sobolev spaces 
both equipped with the standard norm 
$\|u\|_{\Omega, r} = (|\nabla u|_{\Omega,r}^r +|u|_{\Omega,r}^r)^{\frac1r}$
 if $1 \le r < +\infty$.
\end{itemize}
Moreover, if $U:\R^N\to\R$ is a measurable function such that
\begin{equation} \label{Uinfess}
\essinf_{x\in\R^N} U(x)>0,
\end{equation}
we denote by
\begin{itemize}
\item $(L_U^r(\Omega),|\cdot|_{\Omega,U,r})$, if $1\le r<+\infty$, the weighted Lebesgue space with
\[
\begin{split}
&L_U^r(\Omega)=\left\{u\in L^r(\Omega): \int_{\Omega} U(x) |u|^r dx <+\infty\right\},\\
&|u|_{\Omega,U,r} =\left(\int_{\Omega} U(x) |u|^r dx\right)^{\frac1r};
\end{split}
\]
\item $W_{U}^{1,r}(\Omega)$ and $W_{0,U}^{1,r}(\Omega)$, if $1\le r<+\infty$, 
the weighted Sobolev spaces
\[
\begin{split}
W_{U}^{1,r}(\Omega) = &\left\{ u\in W^{1,r}(\Omega): \ \int_{\Omega} U(x) |u|^r dx <+\infty\right\},\\
W_{0,U}^{1,r}(\Omega) = &\left\{ u\in W_0^{1,r}(\Omega): \ \int_{\Omega} U(x) |u|^r dx <+\infty\right\}
\end{split}
\]
endowed with the norm
\begin{equation}   \label{weightnorm}
\|u\|_{\Omega,U, r} = (|\nabla u|_{\Omega,r}^r +|u|_{\Omega,U,r}^r)^{\frac1r}.
\end{equation}
\end{itemize}

For simplicity, we put $B_R = B_R(0)$ for the open ball with center at the origin 
and radius $R>0$ and, if $\Omega = \R^N$, we avoid to write the set in the norms, i.e., 
we denote $|\cdot|_{\infty} = |\cdot|_{\R^N,\infty}$ and, for any $1 \le r < +\infty$, we write
\begin{itemize}
\item $|\cdot|_{r} = |\cdot|_{\R^N,r}$ for the norm in $L^r(\R^N)$;
\item $|\cdot|_{U,r} = |\cdot|_{\R^N,U,r}$ for the norm in $L_U^r(\R^N)$;
\item $\|\cdot\|_r = \|\cdot\|_{\R^N, r}$ for the norm in $W^{1,r}(\R^N) = W^{1,r}_0(\R^N)$;
\item $\|\cdot\|_{U, r} = \|\cdot\|_{\R^N,U, r}$ for the norm in $W_U^{1,r}(\R^N) = W_{0,U}^{1,r}(\R^N)$.
\end{itemize}  

\begin{remark} \label{Remb}
If the potential $U(x)$ satisfies assumption \eqref{Uinfess}, then the following
continuous embeddings hold:
\[
L_U^r(\R^N)\hookrightarrow L^r(\R^N) \quad \hbox{for all $1\le r<+\infty$,}
\]
\begin{equation}  \label{WVem}
W_U^{1, r}(\R^N)\hookrightarrow W^{1, r}(\R^N)
\quad \hbox{for all $1\le r<+\infty$.}
\end{equation}
Clearly, \eqref{WVem} holds also replacing $\R^N$ with $\Omega$ any open subset of $\R^N$ with smooth boundary.
\end{remark}

From Remark \ref{Remb} and Sobolev embedding theorems, we deduce the following result (for the compact embedding, see \cite[Theorem 3.1]{BF}).
\begin{theorem}\label{embed}
Let $U:\R^N\to\R$ be a Lebesgue measurable function such that \eqref{Uinfess} holds. 
Then the following continuous embeddings hold.
\begin{itemize}
\item If $l<N$ then
\begin{equation}  \label{cont1}
W_U^{1,l}(\R^N) \hookrightarrow L^r(\R^N) \quad\mbox{ for any }\; l\le r\le \frac{Nl}{N-l};
\end{equation}
\item If $l=N$ then
\begin{equation}  \label{cont2}
W_U^{1, l}(\R^N)\hookrightarrow L^r(\R^N) \quad\mbox{ for any }\; l\le r<+\infty;
\end{equation}
\item If $l>N$ then
\begin{equation}  \label{cont3}
W_U^{1, l}(\R^N)\hookrightarrow L^{r}(\R^N)  \quad\mbox{ for any }\; l\le r\le+\infty.
\end{equation}
\end{itemize}
Furthermore, if also
\[
\int_{B_1(x)}\frac{1}{U(y)} dy\to 0\;\mbox{ as } |x|\to +\infty,
\]
the compact embedding
\begin{equation}    \label{comp}
W_U^{1, l}(\R^N)\hookrightarrow\hookrightarrow L^r(\R^N) \quad\mbox{ for any } l\le r <l^*
\end{equation}
is valid with
\[
l^*=\begin{cases}
\frac{Nl}{N-l} &\hbox{ if } l<N,\\
+\infty &\hbox{ if } l\ge N.
\end{cases}
\]
\end{theorem}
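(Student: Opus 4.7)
The continuous embeddings \eqref{cont1}--\eqref{cont3} require essentially no work: Remark \ref{Remb} gives $W^{1,l}_U(\R^N) \hookrightarrow W^{1,l}(\R^N)$ continuously, and composing with the classical Sobolev embedding in $\R^N$ yields the three cases distinguished by the sign of $l-N$. The substantive content is therefore the compact embedding \eqref{comp}.

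Let $(u_n)_n$ be bounded in $W^{1,l}_U(\R^N)$, hence also in $W^{1,l}(\R^N)$ by \eqref{WVem}, and, up to a subsequence, $u_n \wk u$ in $W^{1,l}_U(\R^N)$. On each ball $B_R$ classical Rellich--Kondrachov together with a diagonal extraction produces, up to a further subsequence, the strong convergence $u_n \to u$ in $L^r(B_R)$ for every $R>0$ and every $l \le r < l^*$. To upgrade this to convergence in $L^r(\R^N)$ it suffices to prove a uniform tail estimate: for every $\varepsilon > 0$ there exists $R_\varepsilon > 0$ such that
\[
\int_{\R^N \setminus B_{R_\varepsilon}} |u_n|^r\, dx\ <\ \varepsilon \qquad \mbox{for every } n \in \N.
\]

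The heart of the proof is this tail estimate, which I would handle first for $r=l$ via a two--scale truncation. Cover $\R^N \setminus B_{R-1}$ by unit balls $\{B_1(y_k)\}_k$ of uniformly bounded overlap with $|y_k|>R-1$, and on each ball split
\[
\int_{B_1(y_k)} |u_n|^l\, dy\ =\ \int_{B_1(y_k) \cap \{U \le M\}} |u_n|^l\, dy\ +\ \int_{B_1(y_k) \cap \{U > M\}} |u_n|^l\, dy,
\]
with $M>0$ free. The second piece is bounded by $\frac{1}{M}\int_{B_1(y_k)} U|u_n|^l\, dy$, uniformly small once $M$ is large relative to $\|u_n\|_{U,l}$. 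The first piece is estimated via H\"older with exponents $l^*/l$ and $l^*/(l^*-l)$ as
\[
\int_{B_1(y_k) \cap \{U \le M\}} |u_n|^l\, dy\ \le\ \bigl|\{y \in B_1(y_k):U(y) \le M\}\bigr|^{(l^*-l)/l^*}\, |u_n|_{B_1(y_k),\,l^*}^l,
\]
the first factor being majorized through Chebyshev by $(M \int_{B_1(y_k)} U^{-1}\, dy)^{(l^*-l)/l^*}$ and hence vanishing uniformly in $k$ as $R\to +\infty$ by the standing hypothesis, the second being controlled uniformly in $k,n$ by the translation--invariant local Sobolev embedding $W^{1,l}(B_1(y_k)) \hookrightarrow L^{l^*}(B_1(y_k))$ combined with the $W^{1,l}$--bound. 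Summing over $k$ with bounded overlap closes the tail estimate in $L^l$. The intermediate exponents $l<r<l^*$ then follow at once from Lyapunov's inequality
\[
|v|_{L^r(\R^N \setminus B_R)}\ \le\ |v|_{L^l(\R^N \setminus B_R)}^{\theta}\, |v|_{L^{l^*}(\R^N)}^{1-\theta}, \qquad \frac{1}{r} = \frac{\theta}{l} + \frac{1-\theta}{l^*},
\]
combined with the $L^l$--tail bound just obtained and the continuous embedding \eqref{cont1}. The cases $l=N$ and $l>N$ are analogous, using \eqref{cont2} or \eqref{cont3} in place of Sobolev into $L^{l^*}$.

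The main obstacle is orchestrating the two--scale choice of $M$ and $R$ correctly: first $M$ must be taken large to absorb the high--$U$ region into the uniform $W^{1,l}_U$--bound, and only then $R$ is taken large to exploit the sole quantitative smallness supplied by the hypothesis, namely $\int_{B_1(x)} U^{-1}\, dy \to 0$. Once these two scales are arranged, the standard combination of local compactness with global uniform tail smallness delivers strong convergence $u_n \to u$ in $L^r(\R^N)$.
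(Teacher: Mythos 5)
Your proof is correct. Be aware, though, that the paper does not actually prove this theorem: the continuous embeddings \eqref{cont1}--\eqref{cont3} are obtained exactly as you do (Remark \ref{Remb} composed with the classical Sobolev embeddings on $\R^N$), while the compact embedding \eqref{comp} is simply cited from \cite[Theorem 3.1]{BF}. Your argument --- bounded-overlap covering of the exterior of a large ball by unit balls, the two-scale split along $\{U\le M\}$ and $\{U>M\}$, Chebyshev applied to $\int_{B_1(y_k)}U^{-1}\,dy$, the translation-invariant local Sobolev bound, and interpolation to reach the intermediate exponents --- is precisely the classical mechanism behind that citation (proved in \cite{BF} for $l=2$ and extended to general $l$ and to $r\in[l,l^*[$ in later works such as \cite{BaGuRo}), so you have in effect reconstructed the proof the authors delegate to the literature, with the quantifier order ($M$ first, then $R$) handled correctly. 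Two small points deserve tightening: for $l=N$ the H\"older and interpolation steps must use a finite auxiliary exponent $t$ with $r<t<+\infty$ in place of $l^*=+\infty$, since $W^{1,N}(\R^N)$ does not embed into $L^\infty(\R^N)$; and to conclude strong convergence on all of $\R^N$ one should note that the weak limit $u$ also has small tail, which is immediate from $u\in L^r(\R^N)$. Neither affects the validity of the argument.
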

In particular, from Theorem \ref{embed}, it follows that for any $r\ge p$ so that  \eqref{cont1}, respectively \eqref{cont2} or \eqref{cont3},  holds with $r=p$, then a constant $\tau_{r, V}>0$ exists such that
\begin{equation}   \label{sob}
|u|_r\le\tau_{r, V} \|u\|_{V, p} \quad\mbox{ for all } u\in W_V^{1, p}(\R^N).
\end{equation}
From now on, we suppose that
\begin{itemize}
\item[$(P_1)$] the potentials $V, W:\R^N\to\R$ are Lebesgue measurable functions such that
\[
\essinf_{\R^N} V(x)>0,\qquad \essinf_{\R^N} W(x)>0;
\]
\item[$(P_2)$] we have
\[
\int_{B_1(x)}\frac{1}{V(y)} dy\to 0 \qquad\mbox{ as } |x|\to +\infty.
\]
\end{itemize}
Thereby, taking $p, q \in]1, +\infty[$, using the notations introduced at the beginning of this section, we can consider the weighted spaces $(W^{1, p}_V(\R^N), \|\cdot\|_{V})$ and $(W^{1, q}_W(\R^N), \|\cdot\|_W)$.\\
From now on, we set
\begin{equation} \label{Xdefn}
X:= W^{1, p}_V(\R^N)\cap W^{1, q}_W(\R^N)\cap L^{\infty}(\R^N)
\end{equation}
endowed with the norm
\begin{equation} \label{Xnorm}
\|u\|_X := \|u\|_V +\|u\|_W +|u|_{\infty}
\end{equation}
where, to simplify the notations, we denote
\begin{equation}\label{Xnorm2}
\|\cdot\|_{V} = \|\cdot\|_{V, p}\quad\mbox{ and }\quad \|\cdot\|_W=\|\cdot\|_{W, q}.
\end{equation}
Furthermore, Theorem \ref{embed} and definition \eqref{Xdefn} allow us to provide the following result.
\begin{corollary} \label{Cor1}
If assumptions $(P_1)$--$(P_2)$ hold, then
\begin{enumerate}
\item[i)] $(X, \|\cdot\|_X)\hookrightarrow (L^r(\R^N), |\cdot|_r)$ continuously for any $q\le r\le+\infty$;
\item[ii)] $(X, \|\cdot\|_X)\hookrightarrow\hookrightarrow (L^r(\R^N), |\cdot|_r)$ compactly for any $p\le r<+\infty$.
\end{enumerate}
\end{corollary}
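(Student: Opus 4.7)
The plan is to reduce everything to Theorem \ref{embed} applied individually to the weighted Sobolev spaces $W_V^{1,p}(\R^N)$ and $W_W^{1,q}(\R^N)$, plus classical interpolation inequalities in Lebesgue spaces that are legitimate because the $L^\infty$--bound is already built into the norm $\|\cdot\|_X$.

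For (i), I would first record the three "anchor" embeddings: applying Theorem \ref{embed} with $l=q$, $U=W$ and then with $l=p$, $U=V$ (both allowed since $p,q\le N$) yields $W_W^{1,q}(\R^N)\hookrightarrow L^q(\R^N)$ and $W_V^{1,p}(\R^N)\hookrightarrow L^p(\R^N)$; in particular there are constants such that $|u|_q\le C\|u\|_W\le C\|u\|_X$ and $|u|_p\le C\|u\|_V\le C\|u\|_X$, while $|u|_\infty\le\|u\|_X$ is trivial. For a general $r\in[q,+\infty]$ I would split into three cases. If $r=+\infty$ the estimate is immediate. If $q\le r\le p$, classical interpolation between $L^q$ and $L^p$ gives $|u|_r\le|u|_q^{\theta}|u|_p^{1-\theta}$ with a suitable $\theta\in[0,1]$, and the previous bounds yield $|u|_r\le C\|u\|_X$. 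If $p\le r<+\infty$, I interpolate between $L^p$ and $L^\infty$: writing
\[
\int_{\R^N}|u|^r\,dx=\int_{\R^N}|u|^p\,|u|^{r-p}\,dx\le|u|_\infty^{r-p}\,|u|_p^{p},
\]
gives $|u|_r\le|u|_p^{p/r}|u|_\infty^{1-p/r}\le C\|u\|_X$. This establishes the continuous embedding for every $q\le r\le+\infty$.

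For (ii), take a bounded sequence $(u_n)_n\subset X$. Hypothesis $(P_2)$ is precisely the extra condition in Theorem \ref{embed}, so with $l=p$, $U=V$ we have the compact embedding $W_V^{1,p}(\R^N)\hookrightarrow\hookrightarrow L^p(\R^N)$; hence, up to a subsequence, $u_n\to u$ strongly in $L^p(\R^N)$ and, refining further, a.e.\ in $\R^N$, with some limit $u$. The uniform bound on $|u_n|_\infty$ and Fatou's Lemma then give $u\in L^\infty(\R^N)$ with $|u|_\infty\le\liminf_n|u_n|_\infty$, so that $(u_n-u)$ remains bounded in $L^\infty$. The same interpolation trick used in (i) now yields the key estimate: for every $p\le r<+\infty$,
\[
|u_n-u|_r\le|u_n-u|_\infty^{1-p/r}\,|u_n-u|_p^{p/r},
\]
where the first factor is uniformly bounded and the second tends to $0$. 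Consequently $u_n\to u$ in $L^r(\R^N)$ for every $p\le r<+\infty$, which is the compactness claimed in (ii).

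I do not expect any genuine obstacle here: the argument is essentially bookkeeping once Theorem \ref{embed} is in place. The only mildly delicate point is the passage from the compact embedding of $W_V^{1,p}(\R^N)$ into $L^p$ (and, more generally, into $L^r$ for $p\le r<p^*$) to compactness in the full range $p\le r<+\infty$; this is exactly where the $L^\infty$--component in the definition \eqref{Xnorm} of $\|\cdot\|_X$ is crucial, since without it the interpolation step collapses for $r\ge p^*$.
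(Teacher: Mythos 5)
Your proposal is correct and follows essentially the same route as the paper: part (i) comes from Theorem \ref{embed} plus interpolation, and part (ii) extends the compact embedding of $W_V^{1,p}(\R^N)$ into $L^r(\R^N)$ for $r<p^*$ to all $r\ge p$ by exploiting the uniform $L^\infty$ bound built into $\|\cdot\|_X$ through the same interpolation trick (the paper interpolates between $L^{p^*-\varepsilon}$ and $L^\infty$ rather than between $L^p$ and $L^\infty$, which is an immaterial difference). Your variant of starting from a bounded sequence and extracting an a.e.\ convergent subsequence via the compact $L^p$ embedding is, if anything, slightly cleaner than the paper's phrasing in terms of a weakly convergent sequence in $X$.
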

\begin{proof}
The embedding $i)$ is trivially satisfied using hypothesis $(P_1)$. On the other hand, if also $(P_2)$ holds, from \eqref{Xdefn} and \eqref{comp} it follows that
\begin{equation}\label{comp1}
X\hookrightarrow\hookrightarrow L^r(\R^N)\quad\mbox{ for any } r\in [p, p^*[.
\end{equation}
Now, let $r\ge p^*$ and $(u_n)_n\subset X$, $u\in X$ such that $u_n\rightharpoonup u$ in $X$. Thus, fixing any $\varepsilon>0$ with $p<p^*-\varepsilon<p^*$, from \eqref{comp1} we have
\[
|u_n -u|_r^r\le |u_n -u|_{\infty}^{r-p^*+\varepsilon}\int_{\R^N}|u_n -u|^{p^*-\varepsilon} dx\to 0
\]
and then $ii)$ is verified.
\end{proof}
We proceed providing a suitable decomposition for the spaces $W_V^{1, p}(\R^N)$ and $X$.\\
We define the subset
\[
\mathcal{S}=\{ v\in W^{1, p}_V(\R^N): |v|_p =1\},
\]
the functional $\Phi: W^{1, p}_V(\R^N)\to\R$ by
\[
\Phi(u) =\|u\|_V^p,
\]
and
\[
\eta_1 =\inf_{v\in\mathcal{S}} \Phi(v)\ge 0.
\]
From \eqref{comp}, there exists $\psi_1\in\mathcal{S}$ such that $\eta_1=\Phi(\psi_1)$.
Thus, starting from $\eta_1$,  the existence of a sequence of positive numbers $(\eta_j)_j$, such that
\begin{equation}\label{inc}
\eta_j\nearrow +\infty\qquad\mbox{ as } j\to +\infty
\end{equation}
and whose corresponding functions are $(\psi_j)_j$, $\psi_j\neq\psi_k$, if $j\neq k$, is established (see, \cite[Section 5]{CP2}). Moreover, they generate the whole space $W^{1, p}_V(\R^N)$. To be more precise, setting $Y_V^j ={\rm span}\{\psi_1, \dots, \psi_j\}$ and denoting by $Z_V^j$ its complement,  for all $j\in\N$ the decomposition
\[
W^{1, p}_V(\R^N) = Y_V^j \oplus Z_V^j 
\]
is valid. Moreover, using \cite[Lemma 5.4]{CP2}, we recall the inequality 
\begin{equation} \label{2Zineq}
\eta_{j+1} |u|_p^p\le\|u\|_V^p \quad\mbox{ for all } u\in Z_V^j.
\end{equation}
Setting
\begin{equation}\label{Zjdefn}
Z^j:= Z_V^j\cap X,
\end{equation}
we have that $Z^j$ is a closed subspace of $X$ of finite codimension. Then a finite dimensional subspace $Y^j$ of $X$ exists which is its topological complement, i.e., the decomposition
\begin{equation}\label{oplus}
X=Y^j\oplus Z^j
\end{equation}
holds. 

We proceed recalling the following definition.
\begin{definition}
A function $h:\R^N\times\R\to\R$ is a $\mathcal{C}^{k}$--Carath\'eodory function, 
$k\in\N\cup\lbrace 0\rbrace$, if
\begin{itemize}
\item $h(\cdot,t) : x \in \R^N \mapsto h(x,t) \in \R$ is measurable for all $t \in \R$,
\item $h(x,\cdot) : t \in \R \mapsto h(x,t) \in \R$ is $\mathcal{C}^k$ for a.e. $x \in \R^N$.
\end{itemize}
\end{definition}

In order to bring out the variational nature of our problem, we introduce some preliminary assumptions. We assume that
\begin{itemize}
\item[$(h_0)$]
$A$ and $B$ are $\mathcal{C}^1$--Carath\'eodory functions;
\item[$(h_1)$] for any $\rho > 0$ we have that
\[
\begin{split}
&\sup_{\vert t\vert\leq \rho} \vert A\left(\cdot, t\right)\vert \in L^{\infty}(\R^N), \ 
\quad \sup_{\vert t\vert\leq \rho} \vert A_t\left(\cdot, t\right)\vert \in L^{\infty}(\R^N)\\
&\sup_{\vert t\vert\leq \rho} \vert B\left(\cdot, t\right)\vert \in L^{\infty}(\R^N), \ 
\quad \sup_{\vert t\vert\leq \rho} \vert B_t\left(\cdot, t\right)\vert \in L^{\infty}(\R^N).
\end{split}
\]
\end{itemize}
Furthermore, we assume that $g:\R^N\times\R\to\R$ exists such that
\begin{itemize}
\item[$(g_0)$] $g(x, t)$ is a $\mathcal{C}^0$--Carath\'eodory function;
\item[$(g_1)$] $a >0$ and $p<s$ exist such that
\[
|g(x, t)|\le a(|t|^{p-1} + |t|^{s-1})\quad \mbox{ a.e. in } \R^N, \mbox{ for all } t\in\R.
\] 
\end{itemize}
\begin{remark}   \label{RemG}
Assumptions $(g_0)$ and $(g_1)$ imply that 
\begin{equation}  \label{Gdefn}
G:(x, t)\in\R^N\times\R\mapsto\int_0^t g(x, s) ds\in\R
\end{equation}
is a well defined $\mathcal{C}^1$--Carath\'eodory function and
\begin{equation}\label{Gle}
|G(x, t)|\le \frac{a}{p}|t|^p +\frac{a}{s}|t|^s \quad \mbox{ a.e. in } \R^N, \mbox{ for all } t\in\R.
\end{equation}
In particular, from \eqref{Gdefn} and $(g_2)$ it follows that
\begin{equation} \label{G0=0}
G(x, 0) =g(x, 0)=0\quad \mbox{ for a.e. } x\in\R^N.
\end{equation}
\end{remark}

Let $u\in X$. Assumption $(h_1)$ and \eqref{Xdefn} ensure that both $A(\cdot, u)|\nabla u(\cdot)|^p\in L^1(\R^N)$ and $B(\cdot, u)|\nabla u(\cdot)|^q\in L^1(\R^N)$. On the other hand,  embedding $i)$ in Corollary \ref{Cor1} and hypotheses $(g_0)$, $(g_1)$ imply that $G(\cdot, u)\in L^1(\R^N)$ too. Thus the functional
\begin{equation}  \label{funct}
\begin{split}
\J(u) &= \frac1p \int_{\R^N} A(x, u)|\nabla u|^p dx +\frac1q \int_{\R^N}B(x, u)|\nabla u|^q dx\\
& \quad +\frac1p \int_{\R^N} V(x) |u|^p dx +\frac1q \int_{\R^N} W(x)|u|^q dx -\int_{\R^N} G(x, u) dx
\end{split}
\end{equation}
is well defined for all $u\in X$.  Furthermore, taking any $u, v\in X$, the G\^ateaux differential of $\J$ in $u$ along the direction $v$ is given by
\begin{equation}   \label{diff}
\begin{split}
&\hspace{-.135in}\langle d\J(u), v\rangle =\int_{\R^N} A(x, u)|\nabla u|^{p-2} \nabla u\cdot\nabla v dx +\frac1p \int_{\R^N} A_t(x, u) v|\nabla u|^p dx\\
&+\int_{\R^N} B(x, u)|\nabla u|^{q-2} \nabla u\cdot\nabla v dx + \frac1q \int_{\R^N} B_t(x, u) v|\nabla u|^q dx\\
&+\int_{\R^N} V(x) |u|^{p-2} u v dx +\int_{\R^N} W(x) |u|^{q-2} u v dx -\int_{\R^N} g(x, u) v dx.
\end{split}
\end{equation}
In particular, the following regularity result can be stated.
\begin{proposition} \label{C1}
Suppose that hypotheses $(P_1)$,  $(h_0)$--$(h_1)$ and $(g_0)$--$(g_1)$ hold. 
If $(u_n)_n\subset X$, $u\in X$ and $M>0$ are such that
\begin{align*} 
&u_n\to u \quad\mbox{ a.e. in } \R^N,\\
&\|u_n -u\|_V \to 0, \quad \|u_n-u\|_W\to 0\quad\mbox{ as } n\to +\infty,\\
&|u_n|_{\infty}\le M \quad\mbox{ for all } n\in\N,
\end{align*}
then
\[
\J(u_n)\to \J(u) \quad\mbox{ and }\quad \|d\J(u_n) -d\J(u)\|_{X^{\prime}}\to 0\quad\mbox{ as } n\to +\infty.
\]
Hence $\J$ is a $\mathcal{C}^1$ functional in $X$ with Fr\'echet differential  
defined as in \eqref{diff}.
\end{proposition}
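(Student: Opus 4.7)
The plan is to verify separately that $\J(u_n)\to\J(u)$ and $\|d\J(u_n)-d\J(u)\|_{X'}\to 0$ under the hypotheses of the proposition, and then to promote continuity of $d\J$ to all of $X$ via a standard subsequence argument: any $\|\cdot\|_X$-convergent sequence admits a subsequence meeting the hypotheses, since $|u|_\infty\le\|u\|_X$ and pointwise a.e.\ convergence can be extracted from strong convergence in the weighted Sobolev spaces. Once $d\J$ is continuous in the operator norm topology, its coincidence with the G\^ateaux differential \eqref{diff} gives Fr\'echet differentiability and the $\mathcal{C}^1$ regularity. At the start I collect the consequences of the assumptions: up to a further subsequence, $u_n\to u$ and $\nabla u_n\to\nabla u$ pointwise a.e., $|\nabla u_n|^p\to|\nabla u|^p$ in $L^1(\R^N)$ and $|\nabla u_n|^q\to|\nabla u|^q$ in $L^1(\R^N)$, while $|u_n|_\infty\le M$ combined with $(h_0)$–$(h_1)$ furnishes a constant $C_M>0$ with $|A(x,u_n)|, |A_t(x,u_n)|, |B(x,u_n)|, |B_t(x,u_n)|\le C_M$ a.e., and pointwise a.e.\ convergence to $A(x,u), A_t(x,u), B(x,u), B_t(x,u)$ respectively.

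For the convergence of $\J$, the two weighted norm summands pass to the limit directly from $\|u_n-u\|_V\to 0$ and $\|u_n-u\|_W\to 0$. For the quasilinear $A$-summand I split
\[
A(x,u_n)|\nabla u_n|^p-A(x,u)|\nabla u|^p=[A(x,u_n)-A(x,u)]|\nabla u_n|^p+A(x,u)\bigl[|\nabla u_n|^p-|\nabla u|^p\bigr];
\]
the first term tends to zero in $L^1$ by Vitali's theorem, using equi-integrability of $|\nabla u_n|^p$ (from its $L^1$ convergence) and the bounded a.e.\ convergence of $A(x,u_n)$, while the second vanishes from strong $L^1$ convergence of $|\nabla u_n|^p$ and the a.e.\ bound $|A(x,u)|\le C_M$. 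The $B$-summand is identical with $q$ replacing $p$. For the reaction term, $(g_1)$ and \eqref{Gle} yield
\[
|G(x,u_n)-G(x,u)|\le C_1\bigl(|u_n|^{p-1}+|u|^{p-1}+|u_n|^{s-1}+|u|^{s-1}\bigr)|u_n-u|,
\]
and I conclude via the compact embedding $X\hookrightarrow\hookrightarrow L^r(\R^N)$ for every $p\le r<+\infty$ from Corollary~\ref{Cor1}(ii), applied at $r=p$ and $r=s$.

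For the convergence of the differentials, I estimate each of the seven summands of \eqref{diff} uniformly over $v\in X$ with $\|v\|_X\le 1$. Those not involving $A_t$ or $B_t$ reduce, via H\"older's inequality against $\nabla v$ or $v$, to vector- or scalar-valued convergence in the appropriate dual Lebesgue space, most notably
\[
A(x,u_n)|\nabla u_n|^{p-2}\nabla u_n\to A(x,u)|\nabla u|^{p-2}\nabla u\quad\text{in }L^{p'}(\R^N);
\]
this follows from the same splitting as above, using the classical fact that $\nabla u_n\to\nabla u$ in $L^p$ implies $|\nabla u_n|^{p-2}\nabla u_n\to|\nabla u|^{p-2}\nabla u$ in $L^{p'}$, together with the bounded a.e.\ convergence of $A(x,u_n)$. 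The $B$-, $V$-, $W$- and $g$-contributions are handled analogously, the last via Corollary~\ref{Cor1}.

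The main obstacle lies in the two lower-order quasilinear summands $\int A_t(x,u_n)v|\nabla u_n|^p\,dx$ and $\int B_t(x,u_n)v|\nabla u_n|^q\,dx$: here $v$ is undifferentiated, so no H\"older pairing can exploit the weighted Sobolev norms of $v$, and one is forced to absorb $v$ via $|v|_\infty\le\|v\|_X$. This in turn requires genuine $L^1$ convergence $A_t(x,u_n)|\nabla u_n|^p\to A_t(x,u)|\nabla u|^p$, which is precisely where the strong hypothesis $\|u_n-u\|_V\to 0$ enters in a non-negotiable way: it yields $L^1$ convergence and hence equi-integrability of $|\nabla u_n|^p$, after which Vitali combined with the bounded a.e.\ convergence of $A_t(x,u_n)$ closes the estimate. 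The analogous argument for $B_t$ and $|\nabla u_n|^q$ completes the proof, and incidentally clarifies why the $L^\infty(\R^N)$ factor in the definition \eqref{Xdefn} of $X$ is indispensable for a $\mathcal{C}^1$ variational setup.
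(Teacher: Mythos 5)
Your argument is correct and is essentially the standard Nemytskii--operator proof that the paper itself delegates to \cite[Proposition 3.10]{CSS2}: strong convergence of the gradients in $L^p$ and $L^q$ gives $L^1$--convergence (hence equi--integrability) of $|\nabla u_n|^p$ and $|\nabla u_n|^q$, the uniform $L^\infty$ bound plus $(h_1)$ controls the coefficients, Vitali's theorem handles the products, and the undifferentiated test function in the $A_t$-- and $B_t$--terms is absorbed through $|v|_\infty\le\|v\|_X$, which is indeed the reason $L^\infty(\R^N)$ must sit inside $X$. One small correction: you invoke the \emph{compact} embedding of Corollary \ref{Cor1}$(ii)$ for the reaction terms, but that embedding requires $(P_2)$, which is not among the hypotheses of this proposition; it is also unnecessary, since you already have \emph{strong} convergence $\|u_n-u\|_V\to 0$, so the continuous embedding $W^{1,p}_V(\R^N)\hookrightarrow L^p(\R^N)$ from $(P_1)$ gives $|u_n-u|_p\to 0$, and interpolation with the uniform $L^\infty$ bound gives $|u_n-u|_r\to 0$ for every $r\ge p$, which is all you need.
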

\begin{proof}
From Definition \eqref{funct},  the desired result occurs by using same arguments as in \cite[Proposition 3.10]{CSS2}.
\end{proof}
\section{Statement of the main results}\label{sec_main}
From now on, besides hypotheses $(P_1)$--$(P_2)$, $(h_0)$--$(h_1)$, $(g_0)$--$(g_1)$ we consider the following additional conditions
\begin{itemize}
\item[$(h_2)$] a constant $\alpha_0>0$ exists such that
\[
\begin{split}
&A(x, t)\ge\alpha_0 \quad\mbox{ a.e. in } \R^N, \mbox{ for all } t\in\R,\\
&B(x, t)\ge\alpha_0 \quad\mbox{ a.e. in } \R^N, \mbox{ for all } t\in\R;
\end{split}
\]
\item[$(h_3)$] some constants $\mu>p$ (in particular it is $\mu>q$, too) and $\alpha_1>0$ exist so that
\[
\begin{split}
(\mu-p)A(x, t)-A_t(x, t)t\ge\alpha_1 A(x, t) \quad\mbox{ a.e. in } \R^N, \mbox{ for all } t\in\R,\\
(\mu-q)B(x, t)-B_t(x, t)t\ge\alpha_1 B(x, t) \quad\mbox{ a.e. in } \R^N, \mbox{ for all } t\in\R;
\end{split}
\]
\item[$(h_4)$] a constant $\alpha_2>0$ exists such that
\[
\begin{split}
pA(x, t) + A_t(x, t)t\ge\alpha_2 A(x, t) \quad\mbox{ a.e. in } \R^N \mbox{ for all } t \in \R,\\
qB(x, t) + B_t(x, t)t\ge\alpha_2 B(x, t) \quad\mbox{ a.e. in } \R^N \mbox{ for all } t \in \R;
\end{split}
\]
\item[$(g_2)$] having $\mu$ as in hypothesis $(h_3)$, then
\[
0<\mu G(x, t)\le g(x, t)t \quad\mbox{ a.e. in } \R^N, \mbox{ for all } t\in\R\setminus\{0\};
\]
\item[$(g_3)$] taking $\alpha_0$ as in assumption $(h_2)$ and $\tau_{p, V}$ as in \eqref{sob}, we have 
\[
\lim_{t\to 0}\frac{g(x, t)}{|t|^{p-2}t} =\bar{\alpha}<\frac{\alpha_0}{\tau_{p, V}^p};
\]
\item[$(P_3)$] for any $\varrho >0$, a constant $C_{\varrho}>0$ exists such that
\[
\esssup_{|x|\le\varrho} V(x)\le C_{\varrho} \quad\mbox{ and }\quad \esssup_{|x|\le\varrho} W(x)\le C_{\varrho}.
\]
\end{itemize}

\begin{remark} \label{RemV1}
If $(h_2)$ holds, then, without loss of generality, we can assume 
$\alpha_0 \le 1$. Moreover, taking $t=0$ in $(h_3)$, we have
also $\mu -p\ge\alpha_1$.
\end{remark}
\begin{remark} \label{Rmk<}
From assumptions $(h_1)$, $(h_3)$ and direct computations we infer that
\[
\begin{split}
&A(x, t)\le a_1 +a_2 |t|^{\mu-p-\alpha_1}\quad\mbox{ a.e. in $\R^N$, for all $t\in\R$},\\
&B(x, t)\le a_1 +a_2 |t|^{\mu-q-\alpha_1}\quad\mbox{ a.e. in $\R^N$, for all $t\in\R$},
\end{split}
\]
where $a_1, a_2$ denote two positive constants.
\end{remark}
\begin{remark}  
We note that \eqref{Gdefn}, assumptions $(g_0)$--$(g_2)$ and straightforward computations 
imply that for any $\varepsilon>0$ a function $\eta_{\varepsilon}\in L^{\infty}(\R^N)$
exists so that $\eta_{\varepsilon}>0$ for a.e. $x\in\R^N$ and
\begin{equation}  \label{Ggeq}
G(x,t)\ge\eta_{\varepsilon}(x) |t|^{\mu} \quad\mbox{ for a.e. } x\in\R^N \hbox{ if } |t|\ge \varepsilon.
\end{equation}
Furthermore,  hypotheses $(g_0)$--$(g_1)$ and $(g_3)$ imply that for any $\varepsilon>0$ a constant $c_{\varepsilon}>0$ exists such that
\[
|g(x, t)|\le(\bar{\alpha} +\varepsilon)|t|^{p-1} +c_{\varepsilon}|t|^{s-1} \quad\mbox{ a.e. in $\R^N$, for all $\in\R$}
\]
and
\begin{equation}\label{Glimcom}
|G(x, t)|\le\frac{\bar{\alpha} +\varepsilon}{p}|t|^p +\frac{c_{\varepsilon}}{s}|t|^s \quad\mbox{ a.e. in $\R^N$, for all $\in\R$}.
\end{equation}
Hence, from \eqref{Gle} and \eqref{Ggeq} it follows that
\begin{equation}\label{p<q}
1<q\le p<\mu \le s.
\end{equation}
\end{remark}
We provide our main existence and multiplicity results.
\begin{theorem} \label{ExistMain}
Suppose that assumptions $(P_1)$--$(P_3)$, $(h_0)$--$(h_4)$ and $(g_0)$--$(g_3)$ hold with
\begin{equation} \label{hpsotto}
s<p^*.
\end{equation}
Then problem \eqref{main} admits at least one weak nontrivial bounded solution. 
\end{theorem}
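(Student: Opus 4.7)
The plan is to apply the generalized Mountain Pass Theorem (Theorem \ref{absMPT}) to the functional $\J$ defined in \eqref{funct} on the Banach space $X = W^{1,p}_V(\R^N) \cap W^{1,q}_W(\R^N) \cap L^\infty(\R^N)$, choosing as auxiliary space $S$ a Lebesgue space $L^r(\R^N)$ with $p \le r < p^*$ into which $X$ embeds compactly via Corollary \ref{Cor1}. The $C^1$--regularity needed is supplied by Proposition \ref{C1} (applied along sequences with uniform $L^\infty$ bound), while $\J(0) = 0$ follows from \eqref{G0=0}. Nontriviality of the resulting critical point is automatic since its energy lies at the mountain pass level $\beta > 0 = \J(0)$.

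For the geometry near the origin, I would use $(h_2)$ to bound the principal parts from below by $\frac{\alpha_0}{p}\|u\|_V^p + \frac{\alpha_0}{q}\|u\|_W^q$ (after combining the $A$-term with the $V$-term and the $B$-term with the $W$-term). Inequality \eqref{Glimcom} together with the Sobolev estimate \eqref{sob} yields $\int G(x,u)\,dx \le \frac{\bar\alpha+\varepsilon}{p}\tau_{p,V}^p\|u\|_V^p + \frac{c_\varepsilon}{s}|u|_s^s$; the strict inequality in $(g_3)$, for $\varepsilon$ small, lets the first term be absorbed into $\frac{\alpha_0}{p}\|u\|_V^p$, while $|u|_s^s$ is higher order since $s > p$ and $s < p^*$ by \eqref{hpsotto}. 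This produces $\beta, \varrho > 0$ with $\J(u) \ge \beta$ on a suitable sphere. For the far point $e$, I would fix any $v \in C_c^\infty(\R^N)\setminus\{0\}$ and evaluate $\J(tv)$ for $t > 0$ large: by Remark \ref{Rmk<} the $A$ and $B$ contributions grow at most like $t^{\mu-\alpha_1}$ (and the weighted $L^p$/$L^q$ terms polynomially in $t$), whereas \eqref{Ggeq} provides $\int G(x,tv)\,dx \ge c\, t^\mu$ for $t$ large; since $\mu > p \ge q$ and $\alpha_1 > 0$, one obtains $\J(tv) \to -\infty$.

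The main obstacle is verifying the $(wCPS)$ condition in $\R$. For a $(CPS)_\gamma$--sequence $(u_n)_n\subset X$, I would first derive boundedness in $W^{1,p}_V(\R^N)\cap W^{1,q}_W(\R^N)$ from the Ambrosetti--Rabinowitz--type combination $\mu\J(u_n) - \langle d\J(u_n),u_n\rangle$: hypothesis $(h_3)$ yields the coercive integrand $\frac{1}{p}[(\mu-p)A(x,u_n) - A_t(x,u_n)u_n]|\nabla u_n|^p \ge \frac{\alpha_0\alpha_1}{p}|\nabla u_n|^p$ (using $(h_2)$), the analogous bound holds for $B$, and the $g$--term is non-negative by $(g_2)$, so that the potential and gradient norms are controlled by $\mu|\gamma| + o(1)$. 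Corollary \ref{Cor1}(ii) then yields, up to subsequences, $u_n \to u$ a.e. and strongly in $L^r(\R^N)$ for $p \le r < p^*$, with $u$ a weak limit in the weighted Sobolev spaces.

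The delicate point is passing to the limit in the quasilinear terms and, crucially, producing a uniform $L^\infty$--bound which forces $u \in X$. Here I would adopt exactly the approximation strategy announced in Section \ref{sec_bounded}: first solve the Dirichlet problem associated with \eqref{main} on each ball $B_R$, where classical Cerami--Palais--Smale theory applies and where Moser/De Giorgi--Stampacchia iteration (exploiting $(h_2)$ and the subcritical growth $s < p^*$) produces an $L^\infty$--bound on the mountain pass solution $u_R$ depending only on the energy and on the subcritical exponent, hence uniform in $R$. The uniform bounds in $\|\cdot\|_V, \|\cdot\|_W$ and $|\cdot|_\infty$, combined with the compactness in Corollary \ref{Cor1}(ii) and a standard monotonicity/Fatou argument to pass the quasilinear operators to the limit (exploiting Proposition \ref{C1}), finally identify the limit $u$ as a critical point of $\J$ with $\J(u) = \gamma \ge \beta > 0$, giving both $(wCPS)$ and the nontriviality claimed in the statement.
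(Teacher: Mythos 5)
Your proposal conflates two distinct strategies, and this creates a genuine gap. You announce that you will apply Theorem \ref{absMPT} directly to $\J$ on $X$ over $\R^N$, which requires verifying the $(wCPS)$ condition for $\J$ itself, i.e.\ handling an \emph{arbitrary} $(CPS)_\gamma$--sequence of $\J$. But when you reach the ``delicate point'' you switch to solving Dirichlet problems on balls $B_R$ and estimating the mountain--pass solutions $u_R$ there. That produces one particular sequence of approximate critical points; it says nothing about an arbitrary $(CPS)_\gamma$--sequence of $\J$ on $\R^N$, so it cannot be used to close the $(wCPS)$ verification you need for the direct application of the Mountain Pass Theorem. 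The paper commits consistently to the second route: it never proves $(wCPS)$ for $\J$ on $\R^N$. It proves $(wCPS)$ only for the restricted functionals $\J_\Omega$ on bounded domains (Proposition \ref{wCPSbdd}, where the $L^\infty$--estimate of \cite[Lemma 4.5]{CP2} is available), applies Theorem \ref{absMPT} on each ball $B_k$ to get critical points $u_k$ with $\beta\le\J(u_k)\le\beta^*$ uniformly in $k$ (Proposition \ref{ExistOm}), and then passes to the limit: uniform bound $\|u_k\|_X\le M_0$ via Lemma \ref{tecnico} (Proposition \ref{boundX}), strong convergence in $W^{1,p}_V(\R^N)\cap W^{1,q}_W(\R^N)$ (Proposition \ref{limwpr}), identification of the limit as a critical point (Proposition \ref{critt}), and a separate nontriviality argument.

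Two further points are underestimated in your sketch. First, the passage to the limit in the quasilinear terms is not a ``standard monotonicity/Fatou argument'': the terms $\frac1p A_t(x,u_k)\psi(v_k)|\nabla u_k|^p$ and $\frac1q B_t(x,u_k)\psi(v_k)|\nabla u_k|^q$ have no sign, and the paper absorbs them by testing with the Boccardo--Murat--Puel function $\psi(t)=t\,\e^{\eta t^2}$ chosen so that $\beta_1\psi'(t)-\beta_2|\psi(t)|>\beta_1/2$; without this (or an equivalent device) the monotonicity argument does not close. Second, under the approximation route nontriviality of the limit $u_\infty$ is \emph{not} automatic from the mountain--pass level: one must exclude $u_\infty\equiv 0$, which the paper does by contradiction using $(g_2)$, $(h_4)$ and the convergence $\int_{\R^N}g(x,u_k)u_k\,dx\to 0$ to force $\|u_k\|_V^p+\|u_k\|_W^q\to 0$ and hence $\J(u_k)\to 0$, contradicting $\J(u_k)\ge\beta>0$. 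Your geometry and boundedness steps (Propositions \ref{gerho}, \ref{geo1}, \ref{convRN}) are essentially right, but the core of the proof is the limit procedure, and as written your proposal does not supply it.
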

\begin{theorem} \label{MoltMain}
Under the same hypotheses of Theorem \ref{ExistMain}, but replacing assumption $(g_3)$ with the requirement that $g(x, \cdot)$ is odd,  problem \eqref{main} has a sequence $(u_k)_k$ of weak bounded solutions such that $\J(u_k)\nearrow +\infty$.

\end{theorem}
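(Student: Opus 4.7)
The plan is to apply the symmetric Mountain Pass Theorem \ref{SMPT} to $\J$ for each sufficiently large $j \in \N$, producing a pair $\pm u_j \in X$ of critical points with critical level $\beta_j$, and then to verify that $\beta_j \nearrow +\infty$ so as to obtain the required diverging sequence of bounded weak solutions. Since $g(x,\cdot)$ is odd, $G(x,\cdot)$ is even and so $\J$ is even on $X$; also $\J(0) = 0$ by \eqref{G0=0}. The $(wCPS)$ condition on $]0,+\infty[$ is inherited directly from the proof of Theorem \ref{ExistMain}, whose verification of $(wCPS)$ relies on $(h_0)$--$(h_4)$, $(g_0)$--$(g_2)$ and Corollary \ref{Cor1} alone, never on the asymptotic assumption $(g_3)$.

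For the geometric data I would take, for each $j$, the subspaces $E_\beta := Y^j$ and $Z_\beta := Z^j$ from the decomposition \eqref{oplus}; the inequality $\codim Z^j < \dim Y^j <+\infty$ is then built into the construction. Condition $(A_\varrho)(i)$ on $Z^j$ follows by combining the coercivity estimate
\[
\J(u) \ \ge \ \frac{\alpha_0}{p}\|u\|_V^p + \frac{\alpha_0}{q}\|u\|_W^q - \int_{\R^N} G(x,u)\,dx
\]
coming from $(h_2)$ with the upper bound \eqref{Gle}, the spectral inequality \eqref{2Zineq} $|u|_p^p \le \eta_{j+1}^{-1}\|u\|_V^p$, an interpolation $|u|_s \le |u|_p^{1-\theta}|u|_{p^*}^{\theta}$ (for which $s < p^*$ is essential), and Sobolev embedding; the net outcome is an estimate of the form
\[
\J(u) \ \ge \ c_0\|u\|_V^p - c(j)\|u\|_V^s \qquad \text{on } Z^j,
\]
with $c(j) \to 0$ as $j \to \infty$. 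The maximum value of the right-hand side in $\|u\|_V$ then diverges to $+\infty$, and tuning $\varrho_j$ to the corresponding maximum point (expressed in the $S$-norm) yields (i) with $\beta = \beta_j \nearrow +\infty$. For $(A_\varrho)(ii)$ on the finite-dimensional $Y^j$, on which all norms are equivalent, Remark \ref{Rmk<} bounds the four ``elliptic'' terms of $\J$ by $O(\|u\|_X^p + \|u\|_X^{\mu-\alpha_1})$, while writing $u = t\bar u$ with $\bar u$ on a fixed unit sphere of $Y^j$ and applying \eqref{Ggeq} on $\{|u| \ge \varepsilon\}$ (a set of definite positive measure once $t$ is large) produces $\int_{\R^N} G(x,u)\,dx \ge c\|u\|_X^\mu - C$; since $\alpha_1 > 0$ by Remark \ref{RemV1}, one has $\mu > \mu - \alpha_1$, and $\J(u) \to -\infty$ uniformly on $Y^j$.

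Theorem \ref{SMPT} then supplies, for every large $j$, a pair $\pm u_j$ of critical points at level in $[\beta_j, \sup_{Y^j}\J]$, and the sequence satisfies $\J(u_j) \nearrow +\infty$. The main obstacle I anticipate is the fine control required in $(A_\varrho)(i)$: without $(g_3)$ no quantitative smallness of $g$ near zero is available, so the lower bound $\beta_j \to +\infty$ must be squeezed entirely out of the decay of $\eta_{j+1}^{-1}$ via \eqref{2Zineq} (for the $|u|_p^p$ portion of $G$) and of a negative power of $\eta_{j+1}$ via interpolation (for the $|u|_s^s$ portion). The hypothesis $s < p^*$ is indispensable, as it is precisely what enables the interpolation that makes $c(j) \to 0$ at the speed needed to push $\beta_j$ to infinity.
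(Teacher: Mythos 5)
Your geometric verifications are sound and coincide with what the paper actually does: condition $(A_\varrho)(ii)$ on the finite--dimensional piece is Proposition \ref{geo1}, and condition $(A_\varrho)(i)$ on $Z^j$, with the lower bound pushed to $+\infty$ through $\eta_{j+1}\to+\infty$ and the constraint $s<p^*$, is exactly Proposition \ref{PropMolt}. The gap is at the very first step: you apply Theorem \ref{SMPT} directly to $\J$ on $X$, claiming that the $(wCPS)$ condition in $]0,+\infty[$ is ``inherited from the proof of Theorem \ref{ExistMain}''. It is not. The paper never proves that $\J$ satisfies $(wCPS)$ on $\R^N$, and the proof of Theorem \ref{ExistMain} never applies Theorem \ref{absMPT} to $\J$ either. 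The only compactness statement available for $\J$ itself is Proposition \ref{convRN}, which yields boundedness of $(CPS)_\gamma$--sequences together with weak, a.e.\ and $L^r$ convergence; it does not give the $L^\infty$ bound on the limit, the strong convergence in the $S$--norm required by condition (i) of Definition \ref{wCPS}, or $d\J(u)=0$. The full $(wCPS)$ condition is established only for the restrictions $\J_\Omega$ to bounded domains (Proposition \ref{wCPSbdd}), and its proof uses the boundedness of $\Omega$ essentially: the equivalence of norms in Remark \ref{Rmbdd}, the compact embedding into $L^r(\Omega)$ for all $r\in[1,p^*[$, and the $L^\infty$ estimate of \cite[Lemma 4.5]{CP2}. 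Asserting $(wCPS)$ for $\J$ on the whole space is precisely the difficulty that the entire bounded--domain apparatus of Sections \ref{sec_bounded} and \ref{sec_proof} is designed to circumvent.

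The argument that actually closes the theorem, and into which your geometry slots verbatim, is the following: fix $\beta>0$; apply Theorem \ref{SMPT} to $\J_{B_k}$ on $X_{B_k}$ for every $k$ (Proposition \ref{MoltOm}), obtaining critical points $u_k$ of $\J_{B_k}$ with $\beta\le\J(u_k)\le\beta^*$, where $\beta^*=\sup_{Y^j}\J$ is independent of $k$; then prove the uniform bound $\|u_k\|_X\le M_0$ (Proposition \ref{boundX}, which requires Lemma \ref{tecnico}), the strong convergence $u_k\to u_\infty$ in $W^{1,p}_V(\R^N)\cap W^{1,q}_W(\R^N)$ (Proposition \ref{limwpr}), and $d\J(u_\infty)=0$ (Proposition \ref{critt}); by Proposition \ref{C1} the levels pass to the limit, so $\J(u_\infty)\ge\beta$. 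Letting $\beta\to+\infty$ then produces the divergent sequence of solutions. Without this limiting procedure, or an independent proof of $(wCPS)$ for $\J$ on all of $\R^N$ (which would require redoing Steps 2--5 of Proposition \ref{wCPSbdd} on an unbounded domain), your proposal does not yield critical points of $\J$.
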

\begin{remark}
If $q=p$, Theorem \ref{ExistMain} has been already proved (see \cite[Theorem 4.4]{CSS2}). On the other hand, the multiplicity result stated in Theorem \ref{MoltMain} is completely new both in the cases $q=p$ and $q\neq p$. 
\end{remark}
\begin{remark}
Taking $\mu$ as in $(h_3)$, $(g_2)$ and $s$ as in $(g_1)$, it follows by \eqref{p<q} and \eqref{hpsotto} that Theorems \ref{ExistMain} and \ref{MoltMain} hold with
\[
1<q \le p<\mu \le s<p^*.
\]
\end{remark}
Under the assumptions of Theorems \ref{ExistMain} and \ref{MoltMain}, the following geometrical conditions can be provided.
\begin{proposition} \label{geo1}
Assume that hypotheses $(h_0)$--$(h_1)$, $(h_3)$, $(g_0)$--$(g_2)$ and $(P_1)$ hold. Thus for any finite dimensional subspace $F\subset X$, we have
\begin{equation} \label{J-inf}
\lim_{\substack{v\in F\\ \|v\|_X\to +\infty}} \J(v) = -\infty.
\end{equation}
\end{proposition}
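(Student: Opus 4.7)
The plan is to rescale and compare growth rates. Given any sequence $(v_n) \subset F$ with $\|v_n\|_X \to +\infty$, write $v_n = t_n w_n$ with $t_n = \|v_n\|_X$ and $\|w_n\|_X = 1$. Since $F$ is finite-dimensional, its unit sphere is compact, so along a subsequence (not relabelled) $w_n \to w$ in $X$ (all norms on $F$ being equivalent), and a further extraction gives pointwise convergence a.e.\ in $\R^N$; in particular $\|w\|_X = 1$ and $w \neq 0$.

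For the positive part of $\J(v_n)$, I would invoke the growth estimates of Remark \ref{Rmk<}. The bound $A(x,t) \leq a_1 + a_2 |t|^{\mu-p-\alpha_1}$ yields
\[
\frac{1}{p}\int_{\R^N} A(x, v_n)|\nabla v_n|^p dx \leq \frac{a_1}{p} t_n^p |\nabla w_n|_p^p + \frac{a_2}{p} t_n^{\mu-\alpha_1} \int_{\R^N} |w_n|^{\mu-p-\alpha_1}|\nabla w_n|^p dx,
\]
and analogously for the $B$-term with $\mu-q-\alpha_1$ in place of $\mu-p-\alpha_1$. Hölder's inequality together with $X \hookrightarrow L^\infty(\R^N)$ bounds the nonlinear integrals uniformly on the unit sphere of $F$, while the $V$- and $W$-terms contribute merely $\frac{1}{p}t_n^p |w_n|_{V,p}^p$ and $\frac{1}{q}t_n^q |w_n|_{W,q}^q$. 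Evaluating $(h_3)$ at $t = 0$ forces $\mu - p \geq \alpha_1$, hence $p, q \leq \mu - \alpha_1$, so the whole positive part of $\J(v_n)$ is $O(t_n^{\mu - \alpha_1})$.

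For the nonlinear term I would apply \eqref{Ggeq}. Fix $\varepsilon > 0$ and choose $\delta > 0$ so that $E := \{x : |w(x)| \geq \delta\}$ has positive measure; note $|E| < +\infty$ because $w \in L^\mu(\R^N)$ by Corollary \ref{Cor1}. On $E_n := E \cap \{|w_n - w| < \delta/2\}$ one has $|w_n| \geq \delta/2$, so for $n$ large enough that $t_n \geq 2\varepsilon/\delta$ it follows that $|v_n| \geq \varepsilon$ on $E_n$, whence
\[
\int_{\R^N} G(x, v_n) dx \geq t_n^\mu \left(\frac{\delta}{2}\right)^{\mu} \int_{E_n} \eta_\varepsilon(x) dx.
\]
Since $w_n \to w$ a.e.\ in $\R^N$ we have $|E \setminus E_n| \to 0$, and dominated convergence of $\eta_\varepsilon \chi_{E_n}$ (dominated by $\eta_\varepsilon \chi_E \in L^1(\R^N)$, as $\eta_\varepsilon \in L^\infty$ and $|E|<+\infty$) combined with $\eta_\varepsilon > 0$ a.e.\ produces a strictly positive lower bound uniform in $n$ sufficiently large. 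Putting the two estimates together yields $\J(v_n) \leq C t_n^{\mu - \alpha_1} - c t_n^\mu \to -\infty$, since $\alpha_1 > 0$.

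The main obstacle is the last step: extracting a positive lower bound for $\int_{\R^N} G(x,v_n) dx / t_n^\mu$ uniform in $n$, given that $\eta_\varepsilon$ is only known to lie in $L^\infty(\R^N)$ and be positive a.e. Restricting to the finite-measure set $E$ (guaranteed by $w$ being a nonzero $L^\mu$ function) and exploiting the a.e.\ convergence $w_n \to w$ are what make this work.
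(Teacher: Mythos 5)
Your proof is correct and follows essentially the same route as the paper's: rescale along the compact unit sphere of the finite--dimensional subspace, bound the principal part by $O(\|v_n\|_X^{\mu-\alpha_1})$ via Remark \ref{Rmk<} and $\mu-p\ge\alpha_1$, and use \eqref{Ggeq} (together with $G\ge 0$ from $(g_2)$) to show that $-\int_{\R^N}G(x,v_n)\,dx$ dominates. The only difference is in the last step: where you extract an explicit lower bound $c\,t_n^{\mu}$ by restricting to the finite--measure set $\{|w|\ge\delta\}$ and passing to the limit in $\int_{E_n}\eta_{\varepsilon}\,dx$, the paper argues by contradiction, dividing by $\|u_n\|_X^{\mu-\alpha_1}$ and applying Fatou's lemma to an integrand that blows up pointwise on the positive--measure set where the limit $v$ is nonzero.
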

\begin{proof}
Suppose by contradiction that a finite dimensional subspace $F\subset X$ exists which does not serve condition \eqref{J-inf}, namely $(u_n)_n\subset F$ can be exhibited so that
\begin{equation} \label{toinf}
\|u_n\|_X\to +\infty\quad\mbox{ as } n\to +\infty
\end{equation}
and for some $M>0$ it is $\J(u_n)\ge -M$, for all $n\in\N$. Setting $v_n:=\frac{u_n}{\|u_n\|_X}$ we have that $\|v_n\|_X=1$ and a subsequence exists such that $v_n\rightharpoonup v$ weakly in $X$. Actually, since $F$ is a finite dimensional subspace, it follows that $v_n\to v$ strongly in $X$ and almost everywhere in $\R^N$. Thus, in particular, we have $\|v\|_X=1$ and, setting $A:=\{x\in\R^N : v(x)\neq 0\}$, we have $|A|>0$. Now, for a.e. $x\in A$, we have $\displaystyle\lim_n |v_n(x)| =|v(x)|>0$ which, together with \eqref{toinf}, ensures that $|u_n(x)|\to +\infty$.  Hence, from \eqref{Ggeq} we obtain that
\begin{equation} \label{G+inf}
\lim_n\frac{G(x, u_n(x))}{|u_n(x)|^{\mu-\alpha_1}} |v_n(x)|^{\mu-\alpha_1} = +\infty \quad\mbox{ for a.e. } x\in A,
\end{equation}
where $\mu-\alpha_1\ge p$ (see Remark \ref{RemV1}). Now, since \eqref{toinf} holds, we may suppose $\|u_n\|_X\ge 1$ for all $n\in\N$. Thus Remark \ref{Rmk<}, \eqref{Xnorm} and straightforward computations imply that
\begin{equation} \label{conti}
\begin{split}
&\frac1p \int_{\R^N} (A(x, u_n)|\nabla u_n|^p + V(x) |u_n|^p) dx\\
&\quad + \frac1q \int_{\R^N} (B(x, u_n)|\nabla u_n|^q dx + W(x) |u_n|^q dx)\\
&\quad\le \frac1p\left(a_1\|u_n\|_V^p +a_2|u_n|_{\infty}^{\mu-p-\alpha_1}\int_{\R^N}|\nabla u_n|^p dx\right)\\
&\quad+\frac1q\left(a_1\|u_n\|_W^q +a_2|u_n|_{\infty}^{\mu-q-\alpha_1}\int_{\R^N}|\nabla u_n|^q dx\right)\\
&\quad\le \frac1p (a_1\|u_n\|_X^p + a_2\|u_n\|_X^{\mu-\alpha_1})+\frac1q (a_1\|u_n\|_X^q + a_2\|u_n\|_X^{\mu-\alpha_1})\\
&\quad\le \frac2q (a_1+a_2)\|u_n\|_X^{\mu-\alpha_1}.
\end{split}
\end{equation}
Summing up,  from \eqref{funct}, \eqref{conti}, hypothesis $(g_2)$ and Fatou's Lemma we get
\[
\begin{split}
0&=\lim_n \frac{-M}{\|u_n\|_X^{\mu-\alpha_1}}\le\limsup_n\frac{\J(u_n)}{\|u_n\|_X^{\mu-\alpha_1}}\\
&\le\limsup_n\left(\frac2q(a_1+a_2) -\int_{\R^N}\frac{G(x, u_n(x))}{\|u_n\|_X^{\mu-\alpha_1}} dx\right)\\
&\le \frac2q(a_1+a_2) -\int_{\R^N}\liminf_n\frac{G(x, u_n(x))}{|u_n(x)|^{\mu-\alpha_1}}|v_n(x)|^{\mu-\alpha_1} dx.
\end{split}
\]
Then
\[
\int_{\R^N}\liminf_n\frac{G(x, u_n(x))}{|u_n(x)|^{\mu-\alpha_1}}|v_n(x)|^{\mu-\alpha_1} dx\le\frac2q(a_1+a_2),
\]
which contradicts \eqref{G+inf} as $|A|>0$.
\end{proof}

\begin{proposition} \label{gerho}
Assume that $(h_0)$--$(h_2)$, $(g_0)$--$(g_1)$, $(g_3)$ and $(P_1)$ hold. Then two positive constants $\rho$, $\beta > 0$ exist so that
\begin{equation}\label{geoN}
\J(u)\ge\beta \quad\mbox{ for all $u\in X$ with $\|u\|_V=\rho$}.
\end{equation}
\end{proposition}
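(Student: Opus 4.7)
The plan is to obtain a pointwise-in-$u$ lower bound of the form $\J(u)\ge c_1\|u\|_V^p-c_2\|u\|_V^s$ with $s>p$, from which the proposition follows immediately by choosing $\|u\|_V=\rho$ sufficiently small.

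First, I would exploit the positivity hypotheses. By $(h_2)$ the $A$-term is bounded below by $\frac{\alpha_0}{p}\int_{\R^N}|\nabla u|^p dx$, while by $(h_2)$ and $(P_1)$ the $B$- and $W$-terms in \eqref{funct} are nonnegative and can simply be dropped. Since without loss of generality $\alpha_0\le 1$ (Remark \ref{RemV1}), combining with $\frac{1}{p}\int V|u|^p dx$ yields
\[
\J(u)\ \ge\ \frac{\alpha_0}{p}\|u\|_V^p\ -\ \int_{\R^N} G(x,u)\,dx.
\]

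Next, I would estimate the nonlinear term via the consequence of $(g_3)$ recorded in \eqref{Glimcom}: for any $\varepsilon>0$ there exists $c_\varepsilon>0$ with
\[
\int_{\R^N} G(x,u)\,dx\ \le\ \frac{\bar\alpha+\varepsilon}{p}|u|_p^p+\frac{c_\varepsilon}{s}|u|_s^s.
\]
Since $p<s\le p^*$ under the standing assumptions of Section \ref{sec_main}, the Sobolev-type embedding \eqref{sob} applies for both $r=p$ and $r=s$, giving $|u|_r\le\tau_{r,V}\|u\|_V$. Inserting this yields
\[
\J(u)\ \ge\ \frac{\alpha_0-(\bar\alpha+\varepsilon)\tau_{p,V}^p}{p}\|u\|_V^p\ -\ \frac{c_\varepsilon\tau_{s,V}^s}{s}\|u\|_V^s.
\]

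The key observation is now that the strict inequality in $(g_3)$, $\bar\alpha<\alpha_0/\tau_{p,V}^p$, allows one to choose $\varepsilon>0$ so small that $K:=\alpha_0-(\bar\alpha+\varepsilon)\tau_{p,V}^p>0$. Setting $c_1:=K/p$ and $c_2:=c_\varepsilon\tau_{s,V}^s/s$, we get $\J(u)\ge\|u\|_V^p(c_1-c_2\|u\|_V^{s-p})$, and since $s>p$ one can fix $\rho>0$ so small that $c_1-c_2\rho^{s-p}\ge c_1/2$; then $\beta:=\frac{c_1}{2}\rho^p>0$ does the job. There is no real obstacle: this is a standard mountain-pass geometry estimate. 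The only point to watch is that the sphere $\|u\|_V=\rho$ does not control $|u|_\infty$, which is precisely why we must avoid interpolation and instead use the genuine weighted Sobolev embedding \eqref{sob} at the exponent $s$; this is legitimate thanks to \eqref{hpsotto}.
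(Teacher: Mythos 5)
Your proposal is correct and follows essentially the same route as the paper's proof: bound the $A$- and $V$-terms below via $(h_2)$ with $\alpha_0\le 1$, drop the nonnegative $B$- and $W$-terms, estimate $\int_{\R^N}G(x,u)\,dx$ through \eqref{Glimcom} and the embedding \eqref{sob} at $r=p$ and $r=s$, and conclude from $\alpha_0-(\bar\alpha+\varepsilon)\tau_{p,V}^p>0$ together with $p<s$. Your closing remark about needing $s\le p^*$ for $\tau_{s,V}$ is a fair observation, and the paper's own argument relies on the same fact.
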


\begin{proof}
Let $u\in X$ and, from $(g_3)$, take $\varepsilon >0$ such that 
$\bar{\alpha} + \varepsilon <\frac{\alpha_0}{\tau_{p, V}^p}$,
i.e.,
\begin{equation}\label{geoN1}
\alpha_0 - (\bar{\alpha}+\varepsilon) \tau_{p, V}^p > 0.
\end{equation}
Then from $(h_2)$ with $\alpha_0 \le 1$ (see Remark \ref{RemV1}),  \eqref{funct}, \eqref{Glimcom} and \eqref{sob} we have that
\[
\begin{split}
\J(u)&\ge\frac{\alpha_0}{p}\int_{\R^N}(|\nabla u|^p dx +V(x)|u|^p) dx +\frac{\alpha_0}{q}\int_{\R^N}(|\nabla u|^q dx +W(x)|u|^q) dx \\
&-\frac{\bar{\alpha}+\varepsilon}{p}\int_{\R^N}|u|^p dx -\frac{c_{\varepsilon}}{s}\int_{\R^N} |u|^s dx\\
&\ge\ \frac{1}{p}\ \big(\alpha_0 - (\bar{\alpha}+\varepsilon) \tau_{p, V}^p\big) \|u\|_V^p 
- \frac{c_{\varepsilon}}{s} \tau_{s, V}^s \|u\|_V^s.
\end{split}
\]
Then \eqref{geoN1} and $p<s$
allow us to find two positive constants $\rho$ and $\beta$ 
so that \eqref{geoN} holds.
\end{proof}

Moreover,  in order to prove Theorem \ref{MoltMain}, the following result is needed, too.
\begin{proposition}\label{PropMolt}
Suppose that assumptions $(h_0)$--$(h_2)$, $(g_0)$--$(g_1)$ and $(P_1)$--$(P_2)$ hold. Fixing any $\beta>0$, then $j\in\N$ and $\varrho$ sufficiently large exist such that
\[
\J(u)\ge\beta \quad\mbox{ for all $u\in Z^j$ with $\|u\|_V=\varrho$},
\]
with $Z^j$ as in \eqref{Zjdefn}.
\end{proposition}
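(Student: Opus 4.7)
The plan is to exploit the coercive part of $\J$ coming from $(h_2)$ while controlling the nonlinearity $\int G$ via the spectral inequality \eqref{2Zineq} for the $L^p$--term and a vanishing estimate on the complementary subspaces $Z_V^j$ for the $L^s$--term. I would first invoke $(h_2)$ together with the positivity of the remaining $B$-- and $W$--weighted terms in \eqref{funct} to obtain the rough bound
\[
\J(u)\ \ge\ \frac{\alpha_0}{p}\|u\|_V^p - \int_{\R^N} G(x,u)\,dx,
\]
and then estimate the nonlinearity via \eqref{Gle} by $\frac{a}{p}|u|_p^p+\frac{a}{s}|u|_s^s$.

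For $u \in Z^j \subset Z_V^j$, inequality \eqref{2Zineq} gives $|u|_p^p \le \eta_{j+1}^{-1}\|u\|_V^p$. To handle the $L^s$--term, set
\[
\beta_j\ :=\ \sup\{\,|v|_s\ :\ v\in Z_V^j,\ \|v\|_V=1\,\},
\]
so that $|u|_s \le \beta_j \|u\|_V$ on $Z_V^j$. The key step is to show that $\beta_j \to 0$ as $j\to+\infty$. This follows from the compact embedding $W_V^{1,p}(\R^N)\hookrightarrow\hookrightarrow L^s(\R^N)$ provided by \eqref{comp} --- which applies since $(P_1)$--$(P_2)$ hold and $s<p^*$ by \eqref{hpsotto} --- together with the density of $\bigcup_j Y_V^j$ in $W_V^{1,p}(\R^N)$ built into the spectral construction of \cite[Section 5]{CP2}: a near--optimizing sequence $u_j\in Z_V^j$ with $\|u_j\|_V=1$ is bounded, hence weakly convergent along a subsequence to some $u$; being eventually orthogonal to every $\psi_k$ (for $j\ge k$) the weak limit must satisfy $u=0$, and compactness of the embedding then yields $|u_j|_s\to 0$.

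Combining these ingredients on the sphere $\{u\in Z^j:\|u\|_V=\varrho\}$ produces
\[
\J(u)\ \ge\ \frac{1}{p}\Big(\alpha_0-\frac{a}{\eta_{j+1}}\Big)\varrho^p\ -\ \frac{a}{s}\beta_j^s\,\varrho^s.
\]
By \eqref{inc}, for $j$ large enough the coefficient of $\varrho^p$ exceeds $\alpha_0/(2p)$; maximizing the right--hand side over $\varrho>0$ yields the optimal choice $\varrho_j\sim \beta_j^{-s/(s-p)}$ with maximum of order $\beta_j^{-sp/(s-p)}$. Since $\beta_j\to 0$ and $s>p$, this maximum diverges to $+\infty$, so for any prescribed $\beta>0$ one picks $j$ large and $\varrho=\varrho_j$ correspondingly large to conclude.

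The main obstacle I expect is the rigorous proof that $\beta_j\to 0$: beyond the compact embedding one needs the completeness of the eigenfunction--like family $(\psi_j)$ in $W_V^{1,p}(\R^N)$, a point that rests on the variational characterization of $(\eta_j,\psi_j)$ recalled in \cite[Section 5]{CP2} and without which the weak--limit argument cannot pin down the limit to $0$.
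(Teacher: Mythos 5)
Your proposal is correct and follows the same skeleton as the paper's proof: drop the $B$-- and $W$--terms via $(h_2)$, bound $\int G$ by \eqref{Gle}, absorb the $L^p$--contribution through \eqref{2Zineq}, and show that the coefficient multiplying the $L^s$--contribution on $Z_V^j$ can be made small for $j$ large. Where you diverge is in the treatment of the $L^s$--term. The paper avoids any compactness or weak--limit argument: it interpolates $|u|_s^s\le |u|_p^{r}\,|u|_{p^*}^{s-r}$ with $\frac{r}{p}+\frac{s-r}{p^*}=1$, then applies \eqref{2Zineq} to the $L^p$--factor and the Sobolev bound \eqref{sob} to the $L^{p^*}$--factor, obtaining the fully explicit estimate $|u|_s^s\le \tau_{p^*,V}^{s-r}\,\eta_{j+1}^{-r/p}\|u\|_V^{s}$; the divergence $\eta_{j+1}\nearrow+\infty$ then does all the work, and one simply picks $\varrho$ with $\frac{\alpha_0}{2p}\varrho^p>\beta$ first and $j$ afterwards. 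Your route via $\beta_j:=\sup\{|v|_s: v\in Z_V^j,\ \|v\|_V=1\}$ and the compact embedding is workable, but the completeness/density property of $(\psi_j)_j$ that you flag as the weak point is in fact unnecessary: for a near--optimizing sequence $u_j\in Z_V^j$ with $\|u_j\|_V=1$, inequality \eqref{2Zineq} already gives $|u_j|_p^p\le\eta_{j+1}^{-1}\to 0$, so any weak limit in $W_V^{1,p}(\R^N)$ must vanish, and the compact embedding into $L^s$ (legitimate only under $s<p^*$, i.e.\ \eqref{hpsotto}, which you correctly invoke and which the paper also uses implicitly through $\tau_{p^*,V}$) then yields $\beta_j\to 0$. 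In short, your argument closes with the very inequality you already use for the $L^p$--term; the paper's interpolation is the more elementary and quantitative version of the same idea, giving an explicit rate in $\eta_{j+1}$ rather than an abstract vanishing sequence.
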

\begin{proof}
Let $\beta>0$ be fixed.  Taking $u\in Z^j$, $j\in\N$, from \eqref{funct}, hypothesis $(h_2)$, \eqref{Gle}, \eqref{sob}, \eqref{2Zineq} and Hölder's inequality we deduce that
\[
\begin{split}
\J(u)&\ge\frac1p\left(\alpha_0- \frac{a}{\eta_{j+1}}\right)\|u\|_V^p + \frac{\alpha_0}{q}\|u\|_W^q-\frac{a}{s}\int_{\R^N}|u|^s dx\\
&\ge \frac1p\left(\alpha_0- \frac{a}{\eta_{j+1}}-\frac{a\tau_{p^*, V}^{s-r}}{s\eta_{j+1}^{\frac{r}{p}}}\|u\|_V^{s-p}\right)\|u\|_V^p
\end{split}
\]
where $r$ is taken in such a way that $\frac{r}{p}+\frac{s-r}{p^*}=1$.  By \eqref{inc}, we can always choose $\varrho$ large enough such that
\[
\frac{\alpha_0}{2p}\varrho^p \;>\; \beta
\]
and $j$ large so that
\[
\alpha_0- \frac{a}{\eta_{j+1}}-\frac{a\tau_{p^*, V}^{s-r}}{s\eta_{j+1}^{\frac{r}{p}}}\varrho^{s-p}>\frac{\alpha_0}{2},
\]
whence we obtain the desired result.
\end{proof}

Lastly, we provide a convergence result in the whole space $\R^N$.

\begin{proposition}\label{convRN}
Suppose that hypotheses $(P_1)$--$(P_2)$, $(h_0)$--$(h_3)$, $(g_0)$--$(g_2)$ hold. 
Taking any $\gamma\in\R$, we have that any $(CPS)_{\gamma}$--sequence $(u_n)_n\subset X$ 
is bounded in $W^{1, p}_V(\R^N)\cap W^{1, q}_{W}(\R^N)$. 
Furthermore, $u\in W^{1, p}_V(\R^N)\cap W^{1, q}_{W}(\R^N)$ exists such that, up to subsequences, the following limits hold
\begin{align}   \label{conv1}
&u_n\rightharpoonup u \mbox{ weakly in } W_V^{1, p}(\R^N)\cap W^{1, q}_{W}(\R^N),\\    
\label{conv2}
&u_n\to u \mbox{ strongly in } L^r(\R^N) \mbox{ for each } r\in [p, p^*[,\\     
\label{conv3}
&u_n\to u \mbox{ a.e. in } \R^N.
\end{align}
\end{proposition}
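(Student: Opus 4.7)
The plan is to first establish boundedness in $W^{1,p}_V(\R^N)\cap W^{1,q}_W(\R^N)$ via the usual Ambrosetti--Rabinowitz-type trick adapted to the coefficients $A,B$, and then to deduce the convergence statements by reflexivity of the weighted Sobolev spaces together with the compact embedding \eqref{comp}.

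\medskip

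\textbf{Step 1: boundedness.} Fix a $(CPS)_\gamma$--sequence $(u_n)_n\subset X$. From Definition \ref{wCPS}, $\mathcal{J}(u_n)\to\gamma$ and $\|d\mathcal{J}(u_n)\|_{X'}(1+\|u_n\|_X)\to 0$; in particular, $\langle d\mathcal{J}(u_n),u_n\rangle\to 0$, because $|\langle d\mathcal{J}(u_n),u_n\rangle|\le\|d\mathcal{J}(u_n)\|_{X'}\|u_n\|_X$. Using \eqref{funct} and \eqref{diff}, I would compute
\[
\mu\,\mathcal{J}(u_n)-\langle d\mathcal{J}(u_n),u_n\rangle
= I_A + I_B + I_V + I_W + I_G,
\]
where
\[
I_A=\frac{1}{p}\!\int_{\R^N}\!\big[(\mu-p)A(x,u_n)-A_t(x,u_n)u_n\big]|\nabla u_n|^p\,dx,
\]
and $I_B$ is defined analogously, while $I_V=\frac{\mu-p}{p}\int V|u_n|^p\,dx$, $I_W=\frac{\mu-q}{q}\int W|u_n|^q\,dx$ and $I_G=\int[g(x,u_n)u_n-\mu G(x,u_n)]\,dx$. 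By $(h_3)$ combined with $(h_2)$, $I_A\ge \frac{\alpha_0\alpha_1}{p}\int|\nabla u_n|^p\,dx$ and $I_B\ge\frac{\alpha_0\alpha_1}{q}\int|\nabla u_n|^q\,dx$; by $(g_2)$, $I_G\ge 0$; since $\mu>p\ge q$ (see Remark \ref{RemV1} and \eqref{p<q}), the potential terms $I_V,I_W$ are nonnegative. Hence a constant $c>0$ exists such that
\[
\mu\,\mathcal{J}(u_n)-\langle d\mathcal{J}(u_n),u_n\rangle
\;\ge\; c\bigl(\|u_n\|_V^p+\|u_n\|_W^q\bigr).
\]
The left-hand side is bounded by $\mu|\gamma|+1+o(1)$ eventually, so $(u_n)_n$ is bounded in $W^{1,p}_V(\R^N)\cap W^{1,q}_W(\R^N)$.

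\medskip

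\textbf{Step 2: extracting convergent subsequences.} Since $1<q\le p$, both $W^{1,p}_V(\R^N)$ and $W^{1,q}_W(\R^N)$ are reflexive. By Step~1 and a standard diagonal argument, there exist $u_1\in W^{1,p}_V(\R^N)$, $u_2\in W^{1,q}_W(\R^N)$ and a subsequence (still denoted $(u_n)_n$) with $u_n\rightharpoonup u_1$ in $W^{1,p}_V(\R^N)$ and $u_n\rightharpoonup u_2$ in $W^{1,q}_W(\R^N)$. Applying the compact embedding \eqref{comp} under assumptions $(P_1)$--$(P_2)$ to the $W^{1,p}_V$-bounded sequence gives strong convergence in $L^r(\R^N)$ for every $r\in[p,p^*[$, and therefore, up to a further subsequence, a.e.\ convergence on $\R^N$. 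Uniqueness of the a.e.\ limit forces $u_1=u_2=:u$, so \eqref{conv1}, \eqref{conv2} and \eqref{conv3} all hold with the same $u$ lying in $W^{1,p}_V(\R^N)\cap W^{1,q}_W(\R^N)$.

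\medskip

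I expect the only genuinely delicate part to be Step~1, where the signs in the identity for $\mu\mathcal{J}(u_n)-\langle d\mathcal{J}(u_n),u_n\rangle$ have to be tracked through the extra terms $\frac{1}{p}A_t|\nabla u|^p$ and $\frac{1}{q}B_t|\nabla u|^q$; however, $(h_3)$ is tailored precisely to make this combination nonnegative and to absorb the $A_t,B_t$ contributions. Step~2 is then purely a matter of exploiting reflexivity and the compact embedding provided by $(P_1)$--$(P_2)$, which requires no further input.
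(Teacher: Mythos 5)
Your proposal is correct and follows essentially the same route as the paper: the identity for $\mu\J(u_n)-\langle d\J(u_n),u_n\rangle$ estimated from below via $(h_2)$, $(h_3)$, $(g_2)$ and the positivity of the coefficients $\mu-p$, $\mu-q$, followed by reflexivity and the compact embedding \eqref{comp} with $l=p$, $U=V$ to obtain \eqref{conv1}--\eqref{conv3}. No gaps.
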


\begin{proof}
Taking $\gamma\in\R$, let $(u_n)_n\subset X$ be a $(CPS)_{\gamma}$--sequence of $\J$, i.e.,
\begin{equation}  \label{beta0}
\J(u_n)\to\gamma\quad\mbox{ and }\quad \|d\J(u_n)\|_{X^{\prime}}(1+\|u_n\|_X)\to 0 \qquad\mbox{ as } n\to +\infty.
\end{equation}
Thus, from \eqref{funct}, \eqref{diff}, \eqref{beta0}, 
assumptions $(h_2)$--$(h_3)$, $(g_2)$ and also \eqref{weightnorm},
direct computations imply that
\[
\begin{split}
&\mu\gamma +\varepsilon_n =\mu\J(u_n)-\left\langle d\J(u_n), u_n\right\rangle
\ge \frac{\alpha_0\alpha_1}{p}\int_{\R^N}|\nabla u_n|^p dx\\
&+\frac{\mu-p}{p}\int_{\R^N} V(x)|u_n|^p dx + \frac{\alpha_0\alpha_1}{q}\int_{\R^N}|\nabla u_n|^q dx +\frac{\mu-q}{q}\int_{\R^N} W(x) |u_n|^q dx.
\end{split}
\]
It follows, in particular, that $(u_n)_n$ is bounded in the reflexive Banach space $W_V^{1, p}(\R^N)\cap W_W^{1, q}(\R^N)$ and then we have \eqref{conv1}.  Since
\[
W_V^{1, p}(\R^N)\cap W_W^{1, q}(\R^N)\hookrightarrow W_V^{1, p}(\R^N),
\]
we obtain \eqref{conv2} and \eqref{conv3} by using \eqref{comp} with $l=p$ and $U=V$.
\end{proof}

\section{Changeover through bounded domains}\label{sec_bounded}
From now on, let $\Omega$ denote an open bounded domain in $\R^N$. Thus, we define
\begin{equation}\label{Xlim}
X_{\Omega} = W_{0,V}^{1, p}(\Omega)\cap W_{0,W}^{1, q}(\Omega)\cap  L^{\infty}(\Omega)
\end{equation}
endowed with the norm
\[
\|u\|_{X_{\Omega}} =\|u\|_{\Omega,V} +\|u\|_{\Omega, W} +|u|_{\Omega,\infty} \quad\mbox{ for any } u\in X_{\Omega}
\]
where, matching with the notations used in \eqref{Xnorm2}, we set
\[
\|\cdot\|_{\Omega, V} = \|\cdot\|_{\Omega, V, p} \quad\mbox{ and }\quad \|\cdot\|_{\Omega, W} =\|\cdot\|_{\Omega, W, q}.
\]
We denote by $X_{\Omega}^{\prime}$ the dual space of $X$. 
\begin{remark} \label{Rmbdd}
As $\Omega$ is a bounded domain, 
not only $\|u\|_{\Omega, p}$ and $|\nabla u|_{\Omega,p}$ are equivalent norms  
but also, if assumption $(P_3)$ holds, a constant $c_{\Omega} \ge 1$ exists such that
\[
\begin{split}
\|u\|_{\Omega,V, p}^p &= \int_{\Omega} |\nabla u|^p dx + \int_{\Omega} V(x) |u|^p dx\\
&\le \int_{\Omega} |\nabla u|^p dx +c_{\Omega} \int_{\Omega}|u|^p dx\le c_\Omega \|u\|_{\Omega, p}^p.
\end{split}
\]
Hence, Remark \ref{Remb} implies that 
the norms $\|\cdot\|_{\Omega,V, p}$ and $\|\cdot\|_{\Omega, p}$ are equivalent, too.  The same arguments apply for the norms $\|\cdot\|_{\Omega, W, q}$ and $\|\cdot\|_{\Omega, q}$.\\
Moreover, since definition \eqref{Xlim} holds with $1<q \le p$ and $\Omega$ is a bounded domain,  from now on we refer to $X_{\Omega}$ as 
\[
X_{\Omega} =W_{0, V}^{1, p}(\Omega)\cap L^{\infty}(\Omega)
\]
which can be equipped with the norm
\[
\|u\|_{X_{\Omega}} =\|u\|_{\Omega,p} +|u|_{\Omega,\infty} \quad\mbox{ for any } u\in X_{\Omega}.
\]
\end{remark}
Actually, since any function $u \in  X_{\Omega}$ can be trivially extended 
to a function $\tilde{u}\in X$ just assuming $\tilde{u}(x) = 0$ for all $x \in \R^N \setminus \Omega$, 
then
\begin{equation}\label{Xequ}
\begin{split}
\|\tilde{u}\|_V &= \|u\|_{\Omega,V}, 
\quad \|\tilde{u}\|_{W} = \|u\|_{\Omega,W},\\
|\tilde{u}|_{\infty} &= |u|_{\Omega,\infty},\quad \ 
\|\tilde{u}\|_X = \|u\|_{X_\Omega}.
\end{split}
\end{equation}
In this setting,  \eqref{G0=0} and \eqref{funct} imply that the restriction of the functional 
$\J$ to $X_\Omega$, namely 
\[
\J_\Omega = \J|_{X_{\Omega}},
\]
is such that
\begin{equation}  \label{funOm}
\begin{split}
\J_{\Omega}(u) &= \frac1p \int_{\Omega} A(x, u)|\nabla u|^p dx +\frac1q \int_{\Omega}B(x, u)|\nabla u|^q dx\\
&\quad+ \frac1p\int_{\Omega} V(x) |u|^p dx +\frac1q\int_{\Omega} W(x)|u|^q dx-\int_{\Omega} G(x, u) dx.
\end{split}
\end{equation}
\begin{remark}
We note that, setting
\begin{equation}\label{gtilde}
\tilde{g}(x,t) = g(x,t) - V(x) |t|^{p-2} t -W(x)|t|^{q-2} t
\end{equation}
for a.e.  $x \in\R^N$ all $t\in\R$, from $(g_0)$ and $(P_1)$ we have that $\tilde{g}$ is a $\mathcal{C}^0$--Caratheodory function
such that
\begin{equation}\label{gtilde1}
\tilde{G}(x,t) = \int_0^t \tilde{g}(x,s) ds = 
G(x,t)- \frac1p V(x) |t|^{p} -\frac1q W(x)|t|^q
\end{equation}
for a.e.  $x \in\R^N$, all $t\in\R$. Then, by means of assumptions $(g_1)$ and $(P_3)$, some positive constants $\tilde{a}_1,  \tilde{a}_2$ depending on $\Omega$ exist such that
\begin{equation}\label{gtildeup}
|\tilde{g}(x, t)|\le \tilde{a}_1 |t|^{s-1} +\tilde{a}_2 |t|^{q-1}
\quad\mbox{ for a.e. } x \in\Omega, \; \mbox{ all $t\in\R$.}
\end{equation}
In particular, from \eqref{funOm} and \eqref{gtilde1} the functional $\J_{\Omega}$ can be written as
\begin{equation}  \label{funOm1}
\begin{split}
\J_{\Omega}(u) = \ &\frac1p \int_{\Omega} A(x, u)|\nabla u|^p dx +\frac1q \int_{\Omega} B(x, u) |\nabla u|^q dx\\
&- \int_{\Omega} \tilde{G}(x,u) dx, \quad u\in X_{\Omega}.
\end{split}
\end{equation}
\end{remark}
Hence, as \eqref{gtildeup} holds, arguing as in \cite[Proposition 3.1]{CP2}, 
it follows that $\J_{\Omega}:X_{\Omega}\to\R$ is a 
$\mathcal{C}^1$ functional and for any $u$, $v\in X_{\Omega}$, 
its  Fréchet differential in $u$ along the direction $v$ is given by
\begin{equation}\label{diffOm}
\begin{split}
&\langle d\J_{\Omega}(u), v\rangle =\int_{\Omega} A(x, u)|\nabla u|^{p-2} \nabla u\cdot\nabla v dx +\frac1p \int_{\Omega} A_t(x, u) v|\nabla u|^p dx \\
&\qquad+\int_{\Omega} B(x, u)|\nabla u|^{q-2} \nabla u\cdot\nabla v dx +\frac1q \int_{\Omega} B_t(x, u) v|\nabla u|^q dx\\
&\qquad -\int_{\Omega} \tilde{g}(x, u) v dx.
\end{split}
\end{equation}

We prove that the functional $\J_\Omega$ verifies Definition \ref{wCPS}.\\
For simplicity, here and in the following we denote by $(\varepsilon_n)_n$ any sequence decreasing to $0$.
\begin{proposition} \label{wCPSbdd}
Under assumptions $(P_1)$--$(P_3)$, $(h_0)$--$(h_4)$ and $(g_0)$--$(g_2)$, the functional defined in \eqref{funOm} satisfies the $(wCPS)$ condition in $\R$.
\end{proposition}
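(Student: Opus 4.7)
The plan is to verify Definition \ref{wCPS} for $\J_\Omega$ on the bounded domain $\Omega$, following the scheme already outlined in Proposition \ref{convRN} for the full space, but with the additional obstacle of showing that the weak limit lies in $L^\infty(\Omega)$. Fix $\gamma\in\R$ and a $(CPS)_\gamma$--sequence $(u_n)_n\subset X_\Omega$, so that $\J_\Omega(u_n)\to\gamma$ and $\|d\J_\Omega(u_n)\|_{X_\Omega'}(1+\|u_n\|_{X_\Omega})\to 0$. I must produce $u\in X_\Omega$ with $u_n\to u$ strongly in a weaker norm, $\J_\Omega(u)=\gamma$ and $d\J_\Omega(u)=0$.

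First I would establish boundedness of $(u_n)_n$ in the weighted space $W_{0,V}^{1,p}(\Omega)\cap W_{0,W}^{1,q}(\Omega)$, exactly as in Proposition \ref{convRN}: evaluating the combination $\mu\J_\Omega(u_n)-\langle d\J_\Omega(u_n),u_n\rangle$, then invoking $(h_2)$, $(h_3)$ to bound the diffusion terms from below by $\frac{\alpha_0\alpha_1}{p}|\nabla u_n|_{\Omega,p}^p+\frac{\alpha_0\alpha_1}{q}|\nabla u_n|_{\Omega,q}^q$, and using $(g_2)$ (Ambrosetti--Rabinowitz) to absorb the nonlinear term. By Remark \ref{Rmbdd}, $\|\cdot\|_{\Omega,V,p}$ is equivalent to the standard Sobolev norm on $\Omega$, so Rellich's theorem yields, up to a subsequence, $u_n\rightharpoonup u$ in $W_{0,V}^{1,p}(\Omega)\cap W_{0,W}^{1,q}(\Omega)$, $u_n\to u$ strongly in $L^r(\Omega)$ for every $r\in[p,p^*[$, and $u_n\to u$ a.e. in $\Omega$.

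The central difficulty is twofold: showing that $u\in L^\infty(\Omega)$ (so that $u\in X_\Omega$) and passing to the limit inside the quasilinear terms to conclude $d\J_\Omega(u)=0$ and $\J_\Omega(u)=\gamma$. For the $L^\infty$ bound, I would adapt the Stampacchia/Moser iteration argument used in \cite{CP2} in the same framework: one tests $d\J_\Omega(u_n)$ against $v_n=u_n|u_n|^{(k-1)p}$ (or a truncation thereof, to keep $v_n\in X_\Omega$), and relies precisely on assumption $(h_4)$, which guarantees that the ``bad'' quadratic term $\frac{1}{p}A_t(x,u_n)v_n|\nabla u_n|^p$ does not spoil the positivity of the principal part, so that the diffusion produces, modulo the $\tilde g$ contribution controlled by \eqref{gtildeup}, a Moser-type recursion on $|u_n|_{\Omega,p(k+1)}$. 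Passing to the limit then yields $|u|_\infty<+\infty$ and the uniform control allows the standard Browder-type argument (a.e. convergence of the gradients plus Fatou) to conclude strong convergence of the gradients in $L^p(\Omega)\cap L^q(\Omega)$, so that $\|u_n-u\|_{\Omega,V}\to 0$ and $\|u_n-u\|_{\Omega,W}\to 0$.

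Finally, with strong $L^p$ and $L^q$ weighted convergence and $u\in L^\infty(\Omega)$, the convergence result for $\J$ stated in Proposition \ref{C1} (transported to $\J_\Omega$ via the trivial extension \eqref{Xequ}) gives $\J_\Omega(u_n)\to\J_\Omega(u)$ and $\|d\J_\Omega(u_n)-d\J_\Omega(u)\|_{X_\Omega'}\to 0$, hence $\J_\Omega(u)=\gamma$ and $d\J_\Omega(u)=0$. The natural weaker space $S$ for Definition \ref{wCPS} is then $W_{0,V}^{1,p}(\Omega)\cap W_{0,W}^{1,q}(\Omega)$, and properties (i) and (ii) are proved. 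The main obstacle is the iterative $L^\infty$ estimate, where assumption $(h_4)$ plays the decisive role by providing the ``coercive'' sign condition on $pA+A_tt$ and $qB+B_tt$ that makes the Moser scheme work despite the $u$--dependence of the coefficients.
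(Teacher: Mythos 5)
Your Step 1 matches the paper exactly, and your overall plan (boundedness, $L^\infty$ regularity of the limit, gradient convergence, conclusion via Proposition \ref{C1}) is the right skeleton. However, there are two genuine gaps in the middle.

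First, the Moser iteration you propose is not compatible with the Cerami--Palais--Smale control available here. The hypothesis only gives $\|d\J_{\Omega}(u_n)\|_{X_{\Omega}'}(1+\|u_n\|_{X_{\Omega}})\to 0$, so a pairing $\langle d\J_{\Omega}(u_n), v_n\rangle$ is small only when $\|v_n\|_{X_{\Omega}}$ is comparable to $1+\|u_n\|_{X_{\Omega}}$. Your test functions $v_n=u_n|u_n|^{(k-1)p}$ have $|v_n|_{\Omega,\infty}=|u_n|_{\Omega,\infty}^{(k-1)p+1}$, and since the $(CPS)_\gamma$--sequence is \emph{not} known to be bounded in $L^{\infty}(\Omega)$ (that is precisely why the weak form of the condition is needed), the product $\|d\J_{\Omega}(u_n)\|_{X_{\Omega}'}\,\|v_n\|_{X_{\Omega}}$ need not vanish and the recursion does not start. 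The paper avoids this by testing with the shift truncations $R^{\pm}_k u_n$, which satisfy $\|R^{\pm}_k u_n\|_{X_{\Omega}}\le\|u_n\|_{X_{\Omega}}$, deriving from $(h_2)$, $(h_4)$ the level-set inequality $\int_{\Omega^+_k}|\nabla u|^p dx\le d_4\bigl(|\Omega^+_k|+\int_{\Omega^+_k}|u|^s dx\bigr)$ for the \emph{limit} function $u$ (via weak lower semicontinuity and dominated convergence), and then applying the Stampacchia-type Lemma 4.5 of \cite{CP2} to $u$, by contradiction with $\esssup_\Omega u=+\infty$.

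Second, your concluding step silently assumes a uniform $L^{\infty}$ bound on $(u_n)_n$: both the ``Browder-type'' gradient convergence argument (which needs the coefficients $A(x,u_n)$, $A_t(x,u_n)$ uniformly bounded via $(h_1)$) and the invocation of Proposition \ref{C1} (which explicitly requires $|u_n|_{\infty}\le M$) fail without it, and such a bound is not available. The missing idea is the paper's Step 3: replace $u_n$ by $T_k u_n$ with $k\ge |u|_{\Omega,\infty}+1$, prove that the truncated sequence is still asymptotically critical, i.e. $\|d\J_{\Omega}(T_k u_n)\|_{X_{\Omega}'}\to 0$ and $\J_{\Omega}(T_k u_n)\to\gamma$, and only then establish $\|T_k u_n-u\|_{\Omega,p}\to 0$ and conclude. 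This truncation is the crux of the weak $(wCPS)$ mechanism and cannot be skipped.
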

\begin{proof}
Taking $\gamma\in \R$, let $(u_n)_n \subset X_{\Omega}$ be a $(CPS)_\gamma$--sequence, i.e.,
\begin{equation}\label{c1}
\J_{\Omega}(u_n) \to \gamma \quad \hbox{and}\quad \|d\J_{\Omega}(u_n)\|_{X'_{\Omega}}(1 + \|u_n\|_{X_{\Omega}}) \to 0\qquad
\mbox{if $\ n\to+\infty$.}
\end{equation}
We want to prove that $u \in X_{\Omega}$ exists such that 
\begin{itemize}
\item[$(i)$] $\ \|u_n - u\|_{\Omega, p} \to 0$ (up to subsequences), 
\item[$(ii)$] $\ \J_{\Omega}(u) = \gamma$, $\;d\J_{\Omega}(u) = 0$.
\end{itemize}
The proof will be divided in the following steps.
\begin{itemize}
\item[1.] $(u_n)_n$ is bounded in $W_0^{1, p}(\Omega)$; hence, up to subsequences, $u \in W^{1,p}_{0}(\Omega)$ exists such that if $n\to+\infty$, then
\begin{eqnarray}
&&u_n \rightharpoonup u\ \hbox{weakly in $W^{1,p}_{0}(\Omega)$,}
\label{c2}\\
&&u_n \to u\ \hbox{strongly in $L^r(\Omega)$ for each $r \in [1,p^*[$,}
\label{c3}\\
&&u_n \to u\ \hbox{a.e. in $\Omega$}.
\label{c4}
\end{eqnarray}
\item[2.] $u \in L^\infty(\Omega)$.
\item[3.] Having defined $T_k : \R \to \R$ such that
\[
T_k t = \left\{\begin{array}{ll}
t&\hbox{if $|t| \le k$}\\
k \frac t{|t|}&\hbox{if $|t| > k$}
\end{array}\right. ,
\]
if $k \ge |u|_{\infty,\Omega} + 1$ then
\[
\|d\J_{\Omega}(T_k u_n)\|_{X'_{\Omega}} \to 0\quad \hbox{and}\quad
\J_{\Omega}(T_k u_n)\to\gamma \qquad \hbox{as $n \to +\infty$.}
\]
\item[4.] $\|T_k u_n - u\|_{\Omega, p} \to 0$ if $n\to+\infty$; hence, $(i)$ holds.
\item[5.] $(ii)$ is satisfied.
\end{itemize}
\emph{Step 1.} Let $\gamma\in\R$ be fixed and consider a sequence $(u_n)_n\subset X$ satisfying \eqref{c1}.
Thus from Remark \ref{Rmbdd} and same arguments as in Proposition \ref{convRN}, we infer that
\[
\begin{split}
\mu\gamma +\varepsilon_n \ge d_1\|u_n\|^p_{\Omega, p} +d_2\|u_n\|^q_{\Omega,q}.
\end{split}
\]
It follows that $(u_n)_n$ is bounded in $W_0^{1, p}(\Omega)$ and then $u\in W_0^{1, p}(\Omega)$ exists so that, up to subsequences, \eqref{c2}--\eqref{c4} hold.\\
\emph{Step 2.} If $u\notin L^{\infty}(\Omega)$,  either
\begin{equation}    \label{sup_u}
\esssup_{\Omega} u = +\infty
\end{equation}
or
\begin{equation}     \label{sup_menou}
\esssup_{\Omega}(-u) = +\infty. 
\end{equation}
For example, suppose that \eqref{sup_u} holds.
Then, for any fixed $k\in\N$, we have
\begin{align}         \label{mpos}
|\Omega_{k}^{+}|> 0,
\end{align}
with $\Omega_{k}^{+}:= \left\lbrace x\in\Omega \mid u(x) > k\right\rbrace$.
Now, we consider the new function $R^{+}_k: t\in\R \mapsto R^{+}_k t\in\R$ such that
\[
R^{+}_k t = \begin{cases}
0 &\hbox{ if } t\leq k\\
t - k &\hbox{ if } t > k
\end{cases}.
\]
From \eqref{c2}, we have
\[
R^{+}_k u_n \rightharpoonup R^{+}_k u \quad \mbox{ in } W^{1, p}_{0}(\Omega).
\]
Thus by the sequentially weakly lower semicontinuity of $\Vert\cdot\Vert_{\Omega, p}$, it follows that
\[
\int_{\Omega} \vert\nabla R^{+}_k u\vert^{p} dx 
\leq \liminf_{n\to +\infty} \int_{\Omega} \vert\nabla R^{+}_k u_n\vert^{p} dx
\]
i.e.,
\begin{equation}        \label{3.23}
\int_{\Omega_{k}^{+}} \vert\nabla u\vert^{p} dx
 \leq \liminf_{n\to +\infty}\int_{\Omega_{n, k}^{+}} \vert\nabla u_n\vert^{p} dx,
\end{equation}
with $\Omega_{n, k}^{+}:=\left\lbrace x\in\Omega\mid u_n(x) > k\right\rbrace$.
On the other hand, by definition, we have
\[
\Vert R^{+}_k u_n\Vert_{X_{\Omega}} \leq \Vert u_n\Vert_{X_{\Omega}}.
\]
Hence, \eqref{c1} implies that
\[
|\langle d\J_{\Omega}(u_n), R^+_k u_n\rangle|\to 0;
\]
thus, from \eqref{mpos} an integer $n_k\in\N$ exists such that
\begin{equation}      \label{m+mpos}
|\langle d\J_{\Omega}(u_n), R^+_k u_n\rangle|
 < |\Omega^{+}_{k}| \quad \mbox{ for all } n\geq n_k.
\end{equation}
We observe that, taking any $k \ge 1$ and $n \in \N$, from \eqref{gtilde}, \eqref{diffOm}, hypotheses $(P_1)$, $(h_2)$ and $(h_4)$ with $\alpha_2 <q<p$, we obtain
\[
\begin{split}
&\langle d\J_{\Omega}(u_n), R^+_k u_n\rangle=\int_{\Omega^+_{n, k}} \left(1-\frac{k}{u_n}\right)\left(A(x, u_n)+\frac1p A_t(x, u_n) u_n \right) |\nabla u_n|^p dx\\
&\qquad+\int_{\Omega^+_{n, k}} \frac{k}{u_n} A(x, u_n)|\nabla u_n|^p dx+\int_{\Omega^+_{n, k}} \left(1-\frac{k}{u_n}\right) V(x) |u_n|^p dx\\
&\qquad+\int_{\Omega^+_{n, k}} \left(1-\frac{k}{u_n}\right)\left(B(x, u_n)+\frac1q B_t(x, u_n) u_n \right) |\nabla u_n|^q dx\\
&\qquad+\int_{\Omega^+_{n, k}} \frac{k}{u_n} B(x, u_n)|\nabla u_n|^q dx +\int_{\Omega^+_{n, k}} \left(1-\frac{k}{u_n}\right) W(x) |u_n|^q dx\\
&\qquad -\int_{\Omega} g(x, u_n) R^+_k u_n dx\\
&\qquad\ge \frac{\alpha_0\alpha_2}{p}\int_{\Omega^+_{n, k}} |\nabla u_n|^p dx +\frac{\alpha_0\alpha_2}{q}\int_{\Omega^+_{n, k}} |\nabla u_n|^q dx -\int_{\Omega} g(x, u_n) R^+_k u_n dx\\
&\qquad\ge \frac{\alpha_0\alpha_2}{p}\int_{\Omega^+_{n, k}} |\nabla u_n|^p -\int_{\Omega} g(x, u_n) R^+_k u_n dx
\end{split}
\]
which, together with \eqref{m+mpos}, implies
\begin{equation}    \label{mu1lambda}
\frac{\alpha_0\alpha_2}{p} \int_{\Omega^{+}_{n,k}} \vert\nabla u_n\vert^{p} dx 
\leq |\Omega^{+}_{k}| + \int_{\Omega} g(x, u_n) R^{+}_k u_n dx
\quad \mbox{ for all } n\geq n_k.
\end{equation}
Now, from \eqref{c4} and hypothesis $(g_0)$, we have
\[
g(x, u_n) R^+_k u_n\to g(x, u) R^+_k u \quad\mbox{ a.e. in } \Omega.
\]
Furthermore, since $|R^+_k u_n|\le |u_n|$ for a.e. $x\in\Omega$, by using assumption $(g_1)$ and \eqref{c3}, we infer that $h\in L^1(\Omega)$ exists so that
\[
|g(x, u_n)R^+_k u_n|\le d_3(|u_n|^s +|u_n|^p)\le h(x) \quad\mbox{ for a.e. } x\in\Omega
\]
see \cite[Theorem 4.9]{Brezis}. Thus, Dominated Convergence Theorem applies and we have
\begin{equation}\label{limg}
\lim_{n\to +\infty}\int_{\Omega} g(x, u_n) R^+_k u_n dx =\int_{\Omega} g(x, u)R^+_k u dx.
\end{equation}
Now,  again hypothesis $(g_1)$ and \eqref{hpsotto}, \eqref{3.23}, \eqref{mu1lambda}, \eqref{limg} ensure that
\[
\int_{\Omega_k^+} |\nabla u|^p dx \le d_4\left(|\Omega^+_k| + \int_{\Omega^+_k} |u|^s dx\right).
\]
As this inequality holds for any $k\ge 1$, thus \cite[Lemma 4.5]{CP2} applies and then $\displaystyle\esssup_{\Omega} u$ is bounded, yielding a contradiction to \eqref{sup_u}. \\
Suppose that \eqref{sup_menou} holds; it implies that,
fixing any $k\in\N$, we have
\[
|\Omega^{-}_k|>0 \quad \hbox{with}\; \Omega^{-}_k = \lbrace x\in\Omega: u(x) < -k\rbrace.
\]
In this case, by replacing function $R^{+}_k$ with $R^{-}_k:t\in\R\mapsto R^{-}_k t\in\R$ such that
\[
R^{-}_k t:=
\begin{cases}
0 &\hbox{ if } t\geq -k\\
t+k &\hbox{ if } t<-k
\end{cases},
\]
we can reason as above so to apply again \cite[Lemma 4.5]{CP2} which yields a contradiction 
to \eqref{sup_menou}. Then, it has to be $u\in L^{\infty}(\Omega)$.\\
The remaining \emph{Steps 3, 4, 5} can be addressed arguing as in \cite[Proposition 4.6]{CP2}, just taking $\tilde{g}$ as in \eqref{gtilde} and considering the functional as in \eqref{funOm1}.
\end{proof}

From now on, assume that $0\in\Omega$. Then $\varepsilon^*>0$ can be found so that $B_{\varepsilon^*}(0)\subset\Omega$.
\begin{remark} \label{Rmk*}
Suppose that hypotheses of Proposition \ref{geo1} hold. Thus fixing $\bar{u}\in X\setminus\{0\}$ such that ${\rm supp}\bar{u}\subset B_{\varepsilon^*}(0)$, a sufficiently large constant $\bar{\sigma}$ can be found such that
\begin{equation} \label{minbeta}
\|u^*\|_{\Omega, V}\ge\rho\quad\mbox{ and }\quad \J_{\Omega}(u^*)<\beta,
\end{equation}
where $u^*=\bar{\sigma} \bar{u}$ and $\rho, \beta$ are as in Proposition \ref{gerho}.
\end{remark}
\begin{proposition} \label{ExistOm}
Under the assumptions of Theorem \ref{ExistMain}, there exists $\beta>0$ such that the functional $\J_{\Omega}$ admits at least a critical point $u_{\Omega}\in X_{\Omega}$ satisfying
\[
\beta\le\J_{\Omega}(u_{\Omega})\le\sup_{\sigma\in [0,1]}\J_{\Omega}(\sigma u^*).
\]
\end{proposition}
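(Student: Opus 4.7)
The strategy is a direct application of the abstract Mountain Pass Theorem (Theorem \ref{absMPT}) to the functional $\J_{\Omega}$, taking as ambient spaces $X_{\Omega}$ (with its norm $\|\cdot\|_{X_\Omega}$) and $S = W^{1,p}_{0,V}(\Omega)$ equipped with $\|\cdot\|_{\Omega, V}$. The continuous embedding $X_{\Omega}\hookrightarrow S$ is immediate from \eqref{Xnorm}, and from \eqref{G0=0}–\eqref{funOm} together with the remark after \eqref{diffOm} we already know $\J_{\Omega}\in\mathcal{C}^1(X_{\Omega},\R)$ and $\J_{\Omega}(0)=0$. The $(wCPS)$ condition in the whole of $\R$ has been established in Proposition \ref{wCPSbdd}, so the only points left to check are the two geometric conditions required by Theorem \ref{absMPT}.

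For the ``small sphere'' condition, I would take $\rho,\beta>0$ as furnished by Proposition \ref{gerho}. Given $u\in X_{\Omega}$ with $\|u\|_{\Omega, V}=\rho$, its trivial extension $\tilde u\in X$ (zero outside $\Omega$) satisfies, by \eqref{Xequ}, $\|\tilde u\|_V=\rho$ and $\J(\tilde u)=\J_{\Omega}(u)$; hence Proposition \ref{gerho} gives $\J_{\Omega}(u)\ge\beta$. For the ``mountain'' point, Remark \ref{Rmk*} produces $\bar u\in X\setminus\{0\}$ with $\supp \bar u \subset B_{\varepsilon^*}(0)\subset\Omega$, so $u^*=\bar\sigma \bar u$ belongs to $X_{\Omega}$ (after identifying $\bar u$ with its restriction); the relations \eqref{minbeta} then read $\|u^*\|_{\Omega, V}\ge\rho$ and $\J_{\Omega}(u^*)<\beta$, which, up to replacing $\bar\sigma$ by a slightly larger constant if needed, yield a bona fide mountain pass geometry with $e=u^*$.

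With all hypotheses of Theorem \ref{absMPT} in force, I obtain a critical point $u_{\Omega}\in X_{\Omega}$ of $\J_{\Omega}$ with
\[
\J_{\Omega}(u_{\Omega})=\inf_{\gamma\in\Gamma}\sup_{\sigma\in[0,1]}\J_{\Omega}(\gamma(\sigma))\ge\beta,
\]
where $\Gamma=\{\gamma\in\mathcal{C}([0,1],X_{\Omega}):\gamma(0)=0,\ \gamma(1)=u^*\}$. The upper bound is obtained by testing the minimax with the straight line path $\gamma_0(\sigma)=\sigma u^*$, which lies in $\Gamma$ and yields
\[
\J_{\Omega}(u_{\Omega})\le\sup_{\sigma\in[0,1]}\J_{\Omega}(\sigma u^*),
\]
concluding the proof.

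The only delicate point I expect is matching the geometric hypotheses of Theorem \ref{absMPT}, which are formulated in terms of the weaker norm $\|\cdot\|_S=\|\cdot\|_{\Omega, V}$, with the statements of Proposition \ref{gerho} and Remark \ref{Rmk*}: the extension–by–zero identification \eqref{Xequ} and the inclusion $\supp\bar u\subset\Omega$ are exactly what makes this transfer from $X$ to $X_{\Omega}$ valid and ensures that the level $\beta$ and the ``below $\beta$'' point $u^*$ are the same in both frames.
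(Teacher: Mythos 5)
Your proposal is correct and follows essentially the same route as the paper's own proof: both apply Theorem \ref{absMPT} to $\J_{\Omega}$ with $S=W^{1,p}_{0,V}(\Omega)$, transfer the sphere condition from Proposition \ref{gerho} via the extension--by--zero identification \eqref{Xequ}, take $e=u^*$ from Remark \ref{Rmk*}, and bound the minimax level from above by the straight path $\sigma\mapsto\sigma u^*$. Your extra remarks on the strict inequality $\|u^*\|_{\Omega,V}>\rho$ and on the norm-matching between $X$ and $X_\Omega$ are sensible refinements but do not change the argument.
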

\begin{proof}
We observe that $\J_{\Omega}$ is a $\mathcal{C}^1$ functional satisfying the weak Cerami--Palais--Smale condition (see Propositions \ref{C1} and \ref{wCPSbdd}) and by \eqref{G0=0} we have $\J_{\Omega}(0)=0$. Furthermore,  from \eqref{Xequ}, \eqref{funOm} and Proposition \ref{gerho} we infer the existence of $\rho, \beta>0$ which do not depend on $\Omega$ and satisfy
\[
\J(u)\ge\beta \quad\mbox{ for all $u\in X_{\Omega}$, $\|u\|_{\Omega, V}=\rho$}.
\]
Taking $u^*$ as in Remark \ref{Rmk*}, from \eqref{minbeta} we have that Theorem \ref{absMPT} applies to the functional $\J_{\Omega}$ in the Banach space $X_{\Omega}\hookrightarrow W_{0, V}^{1, p}(\Omega)$. Thus, a critical point $u_{\Omega}\in X_{\Omega}$ exists such that
\[
\J_{\Omega}(u_{\Omega})=\inf_{\gamma\in\Gamma_{\Omega}}\sup_{\sigma\in [0, 1]}\J_{\Omega}(\gamma(\sigma))\ge\beta,
\]
with $\Gamma_{\Omega}=\{\gamma\in C([0, 1], X_{\Omega}): \gamma(0)=0, \gamma(1) =u^*\}$. Since we assumed $B_{\varepsilon^*}(0)\subset\Omega$, the claimed result follows by noting that the function
\[
\gamma^*:\sigma\in[0, 1]\mapsto \sigma u^*\in X_{\Omega}
\]
belongs to $\Gamma_{\Omega}$.
\end{proof}
\begin{proposition} \label{MoltOm}
Let the assumptions of Theorem \ref{MoltMain} hold. Thus, for any $\beta>0$, the functional $\J_{\Omega}$ admits at least a pair of symmetric critical points in $X_{\Omega}$, whose corresponding critical level belongs to $[\beta, \beta^*]$, with $\beta^*$ independent of $\Omega$.
\end{proposition}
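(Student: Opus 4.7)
The plan is to apply the symmetric Mountain Pass Theorem (Theorem \ref{SMPT}) to $\J_\Omega$ on $X_\Omega$, with the auxiliary Banach space $S = W^{1,p}_{0,V}(\Omega)$ endowed with the norm $\|\cdot\|_{\Omega,V}$. Proposition \ref{C1}, adapted to the bounded domain as done inside Proposition \ref{wCPSbdd}, will give $\J_\Omega \in \mathcal{C}^1(X_\Omega, \R)$; the oddness of $g(x,\cdot)$ (together with the natural even dependence of $A$ and $B$ on $t$ implicit in this symmetric framework) makes $\J_\Omega$ an even functional; \eqref{G0=0} yields $\J_\Omega(0) = 0$; and Proposition \ref{wCPSbdd} delivers the $(wCPS)$ condition on $\R$, hence in particular on $]0,+\infty[$, as required.

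Next I fix $\beta > 0$ and extract the geometric subspaces. Proposition \ref{PropMolt}, applied in $X$, provides $j \in \N$ and $\varrho > 0$, both depending only on $\beta$ (not on $\Omega$), such that $\J(u) \ge \beta$ for every $u \in Z^j$ with $\|u\|_V = \varrho$. Identifying each $u \in X_\Omega$ with its zero extension $\tilde u \in X$ via \eqref{Xequ}, I set $Z_\beta := Z^j \cap X_\Omega$: this is a closed subspace of $X_\Omega$ of codimension at most $j$, and the identity $\J_\Omega(u) = \J(\tilde u)$ gives $\J_\Omega(u) \ge \beta$ whenever $u \in Z_\beta$ and $\|u\|_{\Omega,V} = \varrho$, which is exactly hypothesis $(i)$ of Theorem \ref{SMPT}.

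For $E_\beta$ I will select a finite-dimensional subspace of $X$ of smooth functions supported in the fixed ball $B_{\varepsilon^*}(0) \subset \Omega$ (so automatically sitting inside $X_\Omega$), of dimension strictly greater than $\codim Z_\beta$, arranged so that the natural map $E_\beta \to X_\Omega / Z_\beta$ is surjective; this ensures both $E_\beta + Z_\beta = X_\Omega$ and the dimension inequality of Theorem \ref{SMPT}. Such a choice is possible because $X_\Omega / Z_\beta$ is finite-dimensional and the $j$ linear functionals defining $Z^j$ are not identically zero on $C_c^\infty(B_{\varepsilon^*}(0))$. Proposition \ref{geo1} applied to this $E_\beta \subset X$ then furnishes $R > 0$ with $\J(v) \le 0$ for every $v \in E_\beta$ with $\|v\|_X \ge R$; since $\supp v \subset B_{\varepsilon^*}(0) \subset \Omega$ forces $\J_\Omega(v) = \J(v)$, condition $(ii)$ of Theorem \ref{SMPT} transfers to $\J_\Omega$ on $E_\beta$.

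Theorem \ref{SMPT} then produces a pair of symmetric critical points of $\J_\Omega$ in $X_\Omega$ with critical values in $[\beta, \beta^*]$, where $\beta^* := \sup_{v \in E_\beta} \J_\Omega(v) = \sup_{v \in E_\beta} \J(v)$. Because $E_\beta$ is a fixed subspace of $X$ whose elements are supported in $B_{\varepsilon^*}(0)$, which lies inside every admissible bounded domain $\Omega$, the value $\beta^*$ depends only on $E_\beta$ and is therefore independent of $\Omega$, as asserted. The main technical delicacy is the algebraic compatibility $E_\beta + Z_\beta = X_\Omega$ while keeping $E_\beta$ anchored inside the fixed ball $B_{\varepsilon^*}(0)$; I overcome this by selecting the finitely many generators of $E_\beta$ in $C_c^\infty(B_{\varepsilon^*}(0))$ so that their images span the finite-dimensional quotient $X_\Omega / Z_\beta$, which amounts to a generic/perturbation choice against the $j$ linear functionals defining $Z^j$.
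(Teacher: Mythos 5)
Your proposal follows essentially the same route as the paper's proof: show that $\J_{\Omega}$ is even, of class $\mathcal{C}^1$ and satisfies the $(wCPS)$ condition (Propositions \ref{C1} and \ref{wCPSbdd}), then verify the geometric hypotheses of Theorem \ref{SMPT} via Proposition \ref{PropMolt} for condition $(i)$ and Proposition \ref{geo1} for condition $(ii)$, obtaining an $\Omega$--independent upper bound for the critical level. The only divergence is in the choice of $E_{\beta}$: the paper simply takes $E_{\beta}=Y^j$ from the decomposition \eqref{oplus}, so that $\beta^*=\sup_{Y^j}\J$ is manifestly independent of $\Omega$, whereas you construct $E_{\beta}$ inside $C_c^\infty(B_{\varepsilon^*}(0))$ — a reasonable refinement, but note that your key claim that such generators can be chosen to surject onto $X_{\Omega}/Z_{\beta}$ (i.e.\ that no nontrivial combination of the functionals defining $Z^j$ annihilates $C_c^\infty(B_{\varepsilon^*}(0))$) is asserted rather than proved.
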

\begin{proof}
As already mentioned in Proposition \ref{ExistOm}, the functional $\J_{\Omega}$ is of class $\mathcal{C}^1$ and satisfies the weak--Cerami--Palais--Smale condition.
Moreover, as $g(x, \cdot)$ is odd,  it follows from \eqref{funOm} that $\J_{\Omega}$ is even.  
Now fixing $\beta>0$, Propositions \ref{geo1} and \ref{PropMolt} guarantee that Theorem \ref{SMPT} applies. Hence a pair $u_{\Omega}, -u_{\Omega}$ of critical points of $\J_{\Omega}$ exists such that
\[
\beta\le \J_{\Omega}(u_{\Omega})\le \sup_{Y^j} \J(u),
\]
where $Y^j$ is a finite dimensional subspace of $X$ such that \eqref{oplus} holds, with $j$ and $Z^j$ as in Proposition \ref{PropMolt}. We note that $\displaystyle\sup_{Y^j} \J(u)$ is a real constant independent of $\Omega$.
\end{proof}
\section{Proof of Theorems \ref{ExistMain} and \ref{MoltMain}}\label{sec_proof}

Throughout this section, for each $k\in\N$ we consider the open ball $B_k$, its related
Banach space $X_{B_k}$ as in \eqref{Xlim} and the functional 
\[
\J_k : u \in X_{B_k} \mapsto \J_k(u) = \J_{B_k}(u) \in \R
\]
with $\J_{B_k}=\J|_{B_k}$. 
\begin{remark}\label{rem00}
For the sake of convenience, if $u \in X_{B_k}$ we always consider its trivial
extension to $0$ in $\R^N \setminus B_k$. If we still denote such an 
extension by $u$ then we have that $u \in X$, too. Moreover, 
from \eqref{G0=0}, Definitions \eqref{funct} and \eqref{funOm}, 
respectively \eqref{diff} and \eqref{diffOm}, imply that
$\J_k(u) = \J(u)$, respectively
\[
\langle d\J_k(u),v\rangle = \langle d\J(u),v\rangle
\quad \hbox{for all $v \in X_{B_k}$.}
\]
\end{remark}
Suppose that the assumptions in Theorem \ref{ExistMain} are satisfied. Since for all $k \in \N$ Proposition \ref{ExistOm}
applies to $\J_k$ in $X_{B_k}$, from Remark \ref{rem00}
a sequence $(u_k)_k \subset X$ exists such that 
for every $k \in \N$ we have
\begin{itemize}
\item[$(i)$] $\ u_k \in X_{B_k}$ with $u_k = 0$ in $\R^N \setminus B_k$,
\item[$(ii)$] $\ \displaystyle \beta\ \le\ \J(u_k)\ \le \ \sup_{\sigma\in [0,1]}\J(\sigma u^*)=: \beta^*$,
\item[$(iii)$] $\langle d\J(u_k),v\rangle = 0$ for all $v \in X_{B_k}$,
\end{itemize}
where $\beta$ and $\beta^*$ are real positive constants which does not depend on $k$.\\
The proof of Theorem \ref{ExistMain} requires different steps.
Firstly, we prove that sequence $(u_k)_k$
is bounded in $X$. To this aim, the following result is needed (see \cite[Lemma 5.6]{CSS2}).

\begin{lemma}
\label{tecnico} 
Let $\Omega$ be an open bounded domain in $\R^N$ and
consider $p$, $r$ so that  $1 < p \le N$
and $p \le r < p^*$ (if $N = p$ we just require that $p^*$ is any
number larger than $r$)
and take $u \in W_0^{1,p}(\Omega)$. If $a^* >0$ and $m_0\in \N$
exist such that 
\[
\label{LUuno}
\int_{\Omega^+_m}|\nabla u|^p dx \le a^* \left(m^r\ |\Omega^+_m| +
\int_{\Omega^+_m} |u|^r dx\right)\quad\hbox{for all $m \ge m_0$,}
\]
with $\Omega^+_m = \{x \in \Omega:\ u(x) > m\}$, then $\displaystyle \esssup_{\Omega} u$
is bounded from above by a positive constant which can be chosen depending only on $|\Omega^+_{m_0}|$, $N$, $p$, $r$, $a^*$, $m_0$, 
$\|u\|_{\Omega, p}$, or better by a positive constant which can be chosen so 
that it depends only on $N$, $p$, $r$, $a^*$, $m_0$ and $a_0^*$
for any $a_0^* > 0$ such that 
\[
\max\{|\Omega^+_{m_0}|,\ \|u\|_{\Omega, p}\}\ \le\ a_0^*.
\]
Vice versa, if inequality
\[
\int_{\Omega^-_m}|\nabla u|^p dx \le a^*\left(m^r\ |\Omega^-_m| +
\int_{\Omega^-_m} |u|^r dx\right) \quad\hbox{for all $m \ge m_0$,}
\]
holds with $\Omega^-_m = \{x \in \Omega:\ u(x) < - m\}$, 
then $\displaystyle \esssup_{\Omega}(-u)$ 
is bounded from above by a positive constant which can be
chosen so that it depends only on $N$, $p$, $r$, $a^*$, $m_0$ and any
constant which is greater than both $|\Omega^-_{m_0}|$ and 
$\|u\|_{\Omega, p}$.
\end{lemma}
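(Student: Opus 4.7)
The plan is to run a Stampacchia-type decay iteration on the super-level sets $\Omega_m^+$. First, fix $m \ge m_0$, set $\varphi(m) := |\Omega_m^+|$, and introduce the truncation $v := (u-m)^+$, which lies in $W^{1,p}_0(\Omega)$ with support in $\Omega_m^+$. The Sobolev embedding (with the routine modification if $p=N$) yields
\[
\|v\|_{L^{p^*}(\Omega_m^+)}^{p}\ \le\ C_S\int_{\Omega_m^+}|\nabla u|^p dx,
\]
so the hypothesis gives
\[
\|v\|_{L^{p^*}(\Omega_m^+)}^{p}\ \le\ a^* C_S\,\Bigl(m^r\varphi(m)+\int_{\Omega_m^+}|u|^r dx\Bigr).
\]
Using $|u|\le v+m$ on $\Omega_m^+$, H\"older's inequality (with exponents $p^*/r$ and $p^*/(p^*-r)$), and the fact that $u \in L^{p^*}(\Omega)$ with $\|u\|_{p^*} \le C_S'\|u\|_{\Omega,p}$, one arrives at
\[
\|v\|_{p^*}^{p}\ \le\ C_1 m^r\varphi(m) + C_1 \|v\|_{p^*}^{r}\,\varphi(m)^{1-r/p^*},
\]
with $C_1$ depending only on $a^*, N, p, r$.

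Next, for $k>m$ the inclusion $\Omega_k^+\subset\Omega_m^+$ together with $u-m\ge k-m$ on $\Omega_k^+$ implies
\[
(k-m)^{p^*}\varphi(k)\ \le\ \|v\|_{L^{p^*}(\Omega_m^+)}^{p^*}.
\]
The main obstacle is to convert the previous displayed inequality into a Stampacchia-type recurrence $\varphi(k)\le C(k-m)^{-p^*}m^\sigma\varphi(m)^\beta$ with $\beta>1$. To this end observe that $\varphi(m)\to 0$ as $m\to\infty$ (since $u\in L^{p^*}(\Omega)$), while $\|v\|_{p^*}$ is a priori bounded by $\|u\|_{p^*}$. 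Hence for all $m\ge \bar m$ with $\bar m$ sufficiently large, the superlinear term $C_1\|v\|_{p^*}^{r}\varphi(m)^{1-r/p^*}$ can be absorbed into $\tfrac12\|v\|_{p^*}^p$ (using the a priori bound when $r>p$, and the smallness of $\varphi(m)^{1-p/p^*}$ when $r=p$), yielding
\[
\|v\|_{p^*}^p\ \le\ 2C_1\, m^r\varphi(m).
\]
Raising to the $p^*/p$-power and combining with the previous display gives
\[
\varphi(k)\ \le\ \frac{C_2\,m^{rp^*/p}}{(k-m)^{p^*}}\,\varphi(m)^{p^*/p},
\qquad k>m\ge \bar m,
\]
with exponent $\beta=p^*/p>1$.

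Finally, the classical Stampacchia iteration lemma applied to this recurrence furnishes an explicit level $k_*<+\infty$ beyond which $\varphi$ vanishes, that is $\esssup_{\Omega}u\le k_*$. A careful tracking of the constants shows that both $\bar m$ and $k_*$ can be chosen depending only on $N,p,r,a^*,m_0$ together with any upper bound $a_0^*$ for $\max\{|\Omega_{m_0}^+|,\|u\|_{\Omega,p}\}$, since those are the only ingredients entering $C_S$, $C_1$, $C_2$, the absorption step, and the starting measure of the iteration. The converse statement for $\Omega_m^-$ follows verbatim upon replacing $u$ by $-u$ and $(u-m)^+$ by $(-u-m)^+$, giving the analogous bound on $\esssup_{\Omega}(-u)$.
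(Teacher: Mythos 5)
The paper does not actually prove Lemma \ref{tecnico}: it is imported verbatim from \cite[Lemma 5.6]{CSS2}, so there is no in-paper argument to compare against. Your Stampacchia--De Giorgi truncation scheme is the standard (and, in \cite{CSS2}, essentially the actual) route, and the skeleton is sound: the Sobolev inequality for $(u-m)^+$, the splitting $|u|\le v+m$ with H\"older, the absorption of the superlinear term for $m$ large (legitimate in both cases $r=p$ and $r>p$ because $\varphi(m)^{1-r/p^*}\to 0$ with a rate controlled by $\|u\|_{\Omega,p}\le a_0^*$ via Chebyshev), and the Chebyshev-type estimate $(k-m)^{p^*}\varphi(k)\le\|v\|_{p^*}^{p^*}$ all check out, and the constants indeed involve only the admissible data.

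The one step you pass over too quickly is the final invocation of ``the classical Stampacchia iteration lemma.'' Your recurrence is $\varphi(k)\le C_2\,m^{rp^*/p}(k-m)^{-p^*}\varphi(m)^{p^*/p}$, whose constant grows with $m$ (since $r\ge p$ forces $rp^*/p\ge p^*$), so the basic lemma with an $m$-independent constant does not apply verbatim. The standard fix is to iterate on geometric levels $k_j=\bar m(2-2^{-j})\in[\bar m,2\bar m]$, bound $m^{rp^*/p}\le(2\bar m)^{rp^*/p}$, and reduce to a recurrence $\varphi_{j+1}\le C(\bar m)\,2^{jp^*}\varphi_j^{\beta}$ with $\beta=p^*/p>1$, which forces $\varphi_j\to 0$ only under a smallness condition on $\varphi(\bar m)$ relative to $C(\bar m)$. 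Since $C(\bar m)\sim\bar m^{(r-p)p^*/p-0}\cdot\bar m^{-p^*}\cdot(\text{const})$ up to the $(k-m)^{-p^*}$ normalization and $\varphi(\bar m)\le\|u\|_{p^*}^{p^*}\bar m^{-p^*}$, the condition amounts to a power of $\bar m$ with exponent $(r-p)p^*/(p^*-p)-p^*$, which is negative precisely because $r<p^*$; hence a suitable $\bar m$ (depending only on $N,p,r,a^*,m_0,a_0^*$) exists and $\esssup_\Omega u\le 2\bar m$. You should make this verification explicit, as it is the only place where the hypothesis $r<p^*$ is genuinely needed beyond H\"older; with it supplied, your proof is complete, and the negative part follows, as you say, by applying the positive part to $-u$.
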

\begin{proposition} \label{boundX}
A constant $M_0 > 0$ exists such that
\begin{equation}\label{bddX}
\|u_k\|_{X}\le M_0\quad\mbox{ for all } k\in\N.
\end{equation}
\end{proposition}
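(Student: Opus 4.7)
The plan is to control separately the three contributions in $\|u_k\|_X = \|u_k\|_V + \|u_k\|_W + |u_k|_\infty$, exploiting property $(iii)$: $u_k$ is a critical point of $\J$ along every direction in $X_{B_k}$, so $u_k$ itself and every truncation belonging to $X_{B_k}$ are admissible test functions.

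First I would bound the weighted Sobolev norms. Since $\langle d\J(u_k),u_k\rangle = 0$ and $\J(u_k)\le\beta^{*}$ by property $(ii)$, we have $\mu\beta^{*}\ge \mu\J(u_k) - \langle d\J(u_k),u_k\rangle$. Arguing verbatim as in the proof of Proposition \ref{convRN}, assumptions $(h_2)$, $(h_3)$ and $(g_2)$ give the lower bound
\[
\mu\J(u_k) - \langle d\J(u_k),u_k\rangle \;\ge\; \frac{\alpha_0\alpha_1}{p}|\nabla u_k|_{p}^{p} + \frac{\mu-p}{p}|u_k|_{V,p}^{p} + \frac{\alpha_0\alpha_1}{q}|\nabla u_k|_{q}^{q} + \frac{\mu-q}{q}|u_k|_{W,q}^{q},
\]
whose coefficients are all strictly positive by Remark \ref{RemV1}. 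This produces a constant $M_1>0$, independent of $k$, such that $\|u_k\|_V + \|u_k\|_W \le M_1$.

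The main obstacle is the uniform $L^{\infty}$ bound: each $u_k$ lies in $L^{\infty}(\R^N)$ by construction, but a priori $|u_k|_\infty$ could blow up with $k$. My plan is to apply Lemma \ref{tecnico} to $u_k$ on the bounded domain $\Omega = B_k$. For any $m\ge 1$ the truncation $R^{+}_m u_k$ introduced in Step~2 of the proof of Proposition \ref{wCPSbdd} belongs to $X_{B_k}$. Testing $d\J(u_k)=0$ against it, invoking $(h_2)$, $(h_4)$ to handle the $A$, $B$ contributions and dropping the positive $V$, $W$ ones exactly as in that proof, yields
\[
\frac{\alpha_0\alpha_2}{p}\int_{\Omega^{+}_{k,m}}|\nabla u_k|^p\, dx \;\le\; \int_{\Omega^{+}_{k,m}} g(x,u_k)\, R^{+}_m u_k\, dx,
\]
where $\Omega^{+}_{k,m} := \{x\in B_k : u_k(x) > m\}$. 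Since $u_k>m\ge 1$ and $R^{+}_m u_k\le u_k$ on $\Omega^{+}_{k,m}$, $(g_1)$ bounds the right-hand side by $d\int_{\Omega^{+}_{k,m}}|u_k|^s\, dx$, which is exactly the structure required by Lemma \ref{tecnico} with $r=s\in[p,p^{*}[$.

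To conclude uniformly in $k$ it remains to check that the quantities fed into Lemma \ref{tecnico} are $k$-independent. The previous step provides a uniform bound on $\|u_k\|_{B_k, p}$, because $(P_1)$ gives $\|u_k\|_{B_k, p}^{p} \le |\nabla u_k|_{p}^{p} + (\essinf V)^{-1}|u_k|_{V,p}^{p}$, and Chebyshev's inequality yields $|\Omega^{+}_{m_0}| \le m_0^{-p}(\essinf V)^{-1}\|u_k\|_V^{p}$, both controlled by $M_1$. Therefore the ``better'' form of Lemma \ref{tecnico} furnishes an upper bound on $\esssup_{B_k} u_k$ depending only on $N$, $p$, $s$ and the structural constants, hence independent of $k$. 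Running the same argument with $R^{-}_m$ in place of $R^{+}_m$ controls $\esssup_{B_k}(-u_k)$, and combining with the Sobolev bound delivers \eqref{bddX}.
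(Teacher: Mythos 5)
Your proposal is correct and follows essentially the same route as the paper: the weighted Sobolev bound via $\mu\J(u_k)-\langle d\J(u_k),u_k\rangle$ as in Proposition \ref{convRN}, then the uniform $L^\infty$ bound by testing against the truncations $R^{\pm}_m u_k$ and invoking Lemma \ref{tecnico} with $k$-independent data. The only cosmetic difference is that you control $|\Omega^{+}_{m_0}|$ by Chebyshev's inequality where the paper bounds $|B^{+}_{k,1}|$ directly by $\int(|u_k|^p+|u_k|^q)\,dx$; both rest on the same uniform bound on $\|u_k\|_V+\|u_k\|_W$.
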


\begin{proof}
From conditions $(i)$ and $(iii)$ we infer that
\begin{equation}\label{p0}
\left\langle d\J(u_k), u_k\right\rangle =0 \quad\mbox{ for all } k\in\N.
\end{equation}
Thus from \eqref{p0}, taking $\mu$ as in assumption $(h_3)$ and arguing as in Proposition \ref{convRN} we get
\[
\mu\J(u_k)\ =\ \mu\J(u_k)-\left\langle d\J(u_k), u_k\right\rangle\ge d_1\|u_k\|_{V}^p +d_2\|u_k\|_W^q,
\]
which, combined with condition $(ii)$,  ensures that
\begin{equation}\label{M1}
\|u_k\|_{V} +\|u_k\|_W\le d_3 \quad\mbox{ for all } k\in\N.
\end{equation}
In particular, from Remark \ref{Remb}, \eqref{Xnorm2} and \eqref{M1} we get
\begin{equation} \label{ukBk}
\|u_k\|_{B_k, p}\le d_4 \quad\mbox{ for all } k\in\N.
\end{equation}
Now, in order to obtain estimate \eqref{bddX}, from
definition \eqref{Xdefn} 
we need to prove that $(u_k)_k$ is a bounded sequence in $L^{\infty}(\R^N)$ too.
To this aim, we note that
for a fixed $k \in \N$ either $|u_k|_\infty \le 1$ or $\ |u_k|_\infty > 1$.
If $\ |u_k|_\infty > 1$, then 
\[
\esssup_{\R^N} u_k > 1\quad \hbox{and/or} \quad
\esssup_{\R^N} (-u_k) > 1.
\]
Assume that $\displaystyle \esssup_{\R^N} u_k > 1$
and consider the set
\[
B^{+}_{k,1} = \{x \in \R^N:\ u_k(x) > 1\}.
\]
From condition $(i)$ we have that
\[
B^{+}_{k,1} \subset \ B_k,
\]
$B^{+}_{k,1}$ is an open bounded domain
such that not only $|B^{+}_{k,1}| > 0$ but also, by means of \eqref{WVem} we have
\[
\begin{split}
|B^{+}_{k,1}| &\le \int_{B^{+}_{k,1}} (|u_k|^p +|u_k|^q) dx\le \int_{\R^N} (|u_k|^p +|u_k|^q) dx\\
& \le \|u_k\|_p^p+\|u_k\|_q^q\le d_5 (\|u_k\|_V^p +\|u_k\|_W^q).
\end{split}
\]
Hence estimate \eqref{M1} gives 
\begin{equation}\label{measbdd}
|B^{+}_{k,1}|\le d_6.
\end{equation}
For any $m \in \N$ we consider 
the new function $R^+_m : \R \to \R$ defined as
\[
t \mapsto R^+_mt = \left\{\begin{array}{ll}
0&\hbox{if $t \le m$}\\
t-m&\hbox{if $t > m$}
\end{array}\right. .
\]
Clearly, for any $m \ge 1$, 
we have that $R^+_m u_k \in X_k$ and from condition $(iii)$,  \eqref{diffOm} and hypotheses $(V_1)$, $(h_2)$, $(h_4)$ and same arguments as in Step 2 of Proposition \ref{wCPSbdd}, it follows that
\[
0\ =\ \langle d\J(u_k), R^+_m u_k\rangle \ge \frac{\alpha_0\alpha_2}{p}\int_{B^+_{k,m}} |\nabla u_k|^p -\int_{\R^N} g(x, u_k) R^+_m u_k dx,
\]
i.e.,
\begin{equation}\label{ie}
\frac{\alpha_0\alpha_2}{p}\int_{B^+_{k,m}} |\nabla u_k|^p dx
\le \int_{\R^N} g(x, u_k)R^+_m u_k dx,
\end{equation}
with $B^+_{k,m} = \{x\in\R^N: u_k(x)>m\}$. 
Clearly, we have $B^+_{k,m}\subseteq B^+_{k,1}$ so from
\eqref{measbdd} we have that
\begin{equation}   \label{OmM}
|B^+_{k,m}|\le d_7.
\end{equation}
Since $(g_2)$ implies $g(x,t) > 0$ if $t>0$ for a.e. $x \in \R^N$, then
$g(x,u_k(x)) > 0$ for a.e. $x \in B_{k,m}^{+}$; so, hypothesis $(g_1)$ implies that
\[
\int_{\R^N} g(x, u_k)R^+_m u_k dx \le \int_{B^{+}_{k,m}} g(x, u_k)u_k dx\le 2a\int_{B^{+}_{k,m}} u_k^s dx,
\]
which, together with \eqref{ie}, suggest
\[
\int_{B^{+}_{k,m}} |\nabla u_k|^p dx \le  
d_8 \int_{B^{+}_{k,m}} u_k^s dx \quad \hbox{for all $m \ge 1$,}
\]
with $d_8> 0$ independent of $m$ and $k$. Since \eqref{hpsotto} holds, Lemma \ref{tecnico}  
applies with $\Omega=B_{k}$ and $d_9 > 1$ exists such that
\[
\esssup_{B_{k}} u_k \le d_9,
\]
where the constant $d_9$ can be choosen independent of $k$ as it only rely upon $N, p, s, d_8$ and $a_0^*$, with $a_0^*\ge \max\{d_7, d_4\}$ (see \eqref{ukBk} and \eqref{OmM}).\\
Similar arguments apply if $\displaystyle \esssup_{\R^N} (-u_k) > 1$.
Summing up, $d_{10} \ge 1$ exists such that
\[
|u_k|_{\infty} \le d_{10}\qquad \hbox{for all $k \in \N$}
\]
and \eqref{bddX} is complied. 
\end{proof}

From estimate \eqref{bddX} and \eqref{Xdefn} it follows that $u_\infty \in W_V^{1, p}(\R^N)\cap W_W^{1, q}(\R^N)$
exists such that, up to subsequences, 
\begin{equation} \label{weakV}
u_k\rightharpoonup u_\infty \quad\mbox{ weakly in } W_V^{1,p}(\R^N)\cap W_W^{1, q}(\R^N).
\end{equation}
Furthermore, from $ii)$ in Corollary \ref{Cor1} it follows that, up to subsequences,
\begin{align} \label{al1}
&u_k\to u_{\infty} \quad\mbox{strongly in } L^r(\R^N) \mbox{ for each } r\ge p;\\ \label{aeRN}
&u_k\to u_{\infty} \quad\mbox{a.e. in }\; \R^N.
\end{align}
On the other hand,  from \eqref{bddX} and \eqref{aeRN}, we have
\begin{equation} \label{inLinf}
u_{\infty}\in L^{\infty}(\R^N)
\end{equation}
(see \cite[Proposition 6.5]{CSS2} for the details). Hence, by definition \eqref{Xdefn}, $u_{\infty}\in X$. 

\begin{proposition}\label{limwpr}
We have that
\begin{equation}\label{strongwpr}
u_k \to u_\infty \quad \hbox{strongly in $W_V^{1,p}(\R^N)\cap W_W^{1, q}(\R^N)$.}
\end{equation}
\end{proposition}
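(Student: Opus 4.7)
Since $W_V^{1,p}(\R^N)$ and $W_W^{1,q}(\R^N)$ are uniformly convex reflexive Banach spaces, the weak convergence \eqref{weakV} upgrades to strong convergence as soon as one establishes $\|u_k\|_V\to\|u_\infty\|_V$ and $\|u_k\|_W\to\|u_\infty\|_W$. My plan is to deduce these norm convergences from a pointwise a.e. convergence of the gradients combined with a careful passage to the limit in the source term.

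The preliminary step is to identify $u_\infty$ as a weak critical point of $\J$ on $X$. For any $\varphi\in C_c^\infty(\R^N)$, as soon as $\supp\varphi\subset B_k$, condition $(iii)$ with $v=\varphi$ gives $\langle d\J(u_k),\varphi\rangle=0$. To pass to the limit in the two quasilinear terms $\int A(x,u_k)|\nabla u_k|^{p-2}\nabla u_k\cdot\nabla\varphi\,dx$ and its $q$-analogue, a.e. convergence of the gradients is required. I would obtain it via a Boccardo--Murat type argument: choose $\chi_R\in C_c^\infty(\R^N)$ with $\chi_R\equiv 1$ on $B_R$, $\supp\chi_R\subset B_{R+1}$, and plug the test function $\chi_R\, T_\delta(u_k-u_\infty)\in X_{B_k}$ (for $k>R+1$) into condition $(iii)$. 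Using $(h_1)$--$(h_4)$, the $L^\infty$ bound \eqref{bddX}, the strong $L^r$-convergence \eqref{al1} to absorb the lower-order terms, and the classical Simon inequalities
\[
\bigl(|\xi|^{p-2}\xi-|\eta|^{p-2}\eta\bigr)\cdot(\xi-\eta)\ \ge\ c_p\,\omega_p(\xi,\eta)
\]
(with $\omega_p(\xi,\eta)=|\xi-\eta|^p$ if $p\ge2$ and the standard variant for $1<p<2$), one isolates the monotone contribution $\int_{B_R}A(x,u_k)\bigl(|\nabla u_k|^{p-2}\nabla u_k-|\nabla u_\infty|^{p-2}\nabla u_\infty\bigr)\cdot\nabla T_\delta(u_k-u_\infty)\,dx\to 0$. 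Letting first $\delta\to 0^+$ and then $R\to+\infty$ via a diagonal subsequence yields $\nabla u_k\to\nabla u_\infty$ a.e. in $\R^N$. By $(P_3)$ and density of $C_c^\infty(\R^N)$ in $X$, condition $(iii)$ extends to $\langle d\J(u_\infty),v\rangle=0$ for every $v\in X$.

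With pointwise convergence of gradients at hand, I next upgrade to convergence of norms. Fatou's lemma, applied to the four nonnegative integrands $A(x,u_k)|\nabla u_k|^p$, $B(x,u_k)|\nabla u_k|^q$, $V(x)|u_k|^p$, $W(x)|u_k|^q$ (nonnegativity from $(h_2)$ and $(P_1)$), gives one-sided control; in particular, $\int V|u_\infty|^p\le\liminf\int V|u_k|^p$ and $|\nabla u_\infty|_p^p\le\liminf|\nabla u_k|_p^p$, and likewise for the $q$-quantities. For the reverse inequalities, I use the identities $\langle d\J(u_k),u_k\rangle=0$ (obtained from $(iii)$ with $v=u_k\in X_{B_k}$) and $\langle d\J(u_\infty),u_\infty\rangle=0$. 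Hypothesis $(h_4)$ bounds from below the sums $A+\tfrac1p A_t u_k$ and $B+\tfrac1q B_t u_k$ by $\tfrac{\alpha_2}{p}A$ and $\tfrac{\alpha_2}{q}B$ respectively, so these identities allow one to express each of the four quantities above in terms of $\int g(x,u_k)u_k\,dx$ plus controlled remainders. Since $(g_1)$, the $L^\infty$ bound \eqref{bddX} and \eqref{al1} yield $\int g(x,u_k)u_k\,dx\to\int g(x,u_\infty)u_\infty\,dx$ by dominated convergence, one obtains matching upper limits and hence genuine convergence $\|u_k\|_V\to\|u_\infty\|_V$, $\|u_k\|_W\to\|u_\infty\|_W$.

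Combining these norm convergences with \eqref{weakV} and the uniform convexity of $W_V^{1,p}(\R^N)$ and $W_W^{1,q}(\R^N)$ delivers the strong convergence \eqref{strongwpr}. I expect the main obstacle to be the Boccardo--Murat argument of the second paragraph: the coefficients $A(x,u_k)$, $B(x,u_k)$ depend on $u_k$, so the monotonicity of $\xi\mapsto|\xi|^{p-2}\xi$ must be isolated after carefully handling cross-terms in which the difference $A(x,u_k)-A(x,u_\infty)$ (respectively for $B$) appears. Here the uniform bound \eqref{bddX} together with the continuity of $A,B$ in $t$ (via $(h_0)$--$(h_1)$) and the dominated convergence theorem in $L^r_{\mathrm{loc}}$ (from \eqref{al1}) are essential to ensure that these cross-terms are negligible.
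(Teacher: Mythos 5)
Your strategy is genuinely different from the paper's. The paper works directly with the single global test function $\psi(v_k)=v_k\,\e^{\eta v_k^2}$, $v_k=u_k-u_\infty$, in $\langle d\J(u_k),\psi(v_k)\rangle=0$: the inequality $\psi'(t)-d_7|\psi(t)|>\tfrac12$ absorbs the $A_t$, $B_t$ terms into the principal part, and monotonicity of $\xi\mapsto|\xi|^{p-2}\xi$ and of $t\mapsto|t|^{p-2}t$ then yields \eqref{strongwpr} in one stroke, without ever knowing that $u_\infty$ is a critical point (that is Proposition \ref{critt}, proved \emph{afterwards} using \eqref{strongwpr}). You invert this order: a.e.\ convergence of gradients by a Boccardo--Murat localization, then $d\J(u_\infty)=0$, then convergence of norms from the two energy identities, then Radon--Riesz via uniform convexity. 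This route is viable in principle, and your identification of the cross-terms involving $A(x,u_k)-A(x,u_\infty)$ as the main obstacle in the localization step is apt.

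However, two steps have genuine gaps. First, $C_c^\infty(\R^N)$ is \emph{not} dense in $(X,\|\cdot\|_X)$, because $\|\cdot\|_X$ contains the $L^\infty$ norm; so you cannot pass "by density" from $\langle d\J(u_\infty),\varphi\rangle=0$ for $\varphi\in C_c^\infty(\R^N)$ to $\langle d\J(u_\infty),u_\infty\rangle=0$, and your entire third paragraph hinges on that identity. It can be repaired (approximate $u_\infty$ by mollified truncations converging in $W_V^{1,p}\cap W_W^{1,q}$, uniformly bounded and converging a.e., and pass to the limit term by term using $A_t(\cdot,u_\infty)|\nabla u_\infty|^p\in L^1(\R^N)$), but not by the density claim as stated. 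Second, the "matching upper limits" step does not work as described: $\langle d\J(u_k),u_k\rangle=0$ controls $\int\big[A+\tfrac1p A_t u_k\big]|\nabla u_k|^p\,dx$, not $\int A|\nabla u_k|^p\,dx$ or $|\nabla u_k|_p^p$, and bounding these coefficients below via $(h_4)$ only compares a lower-bounded quantity for $u_k$ with the \emph{full} energy of $u_\infty$, which is strictly larger in general. The argument that does work is: the four integrands $\big[A+\tfrac1p A_t u_k\big]|\nabla u_k|^p$, $\big[B+\tfrac1q B_t u_k\big]|\nabla u_k|^q$, $V|u_k|^p$, $W|u_k|^q$ are nonnegative and converge a.e.\ (this needs your gradient convergence), the sum of their integrals converges to the sum of the limit integrals (by the two energy identities and $\int g(x,u_k)u_k\,dx\to\int g(x,u_\infty)u_\infty\,dx$), hence each integral converges separately by Fatou applied summand by summand; one must then still strip off the bounded, uniformly positive, a.e.-convergent coefficient $A+\tfrac1p A_t u_k$ (Scheff\'e's lemma plus generalized dominated convergence) to reach $|\nabla u_k|_p^p\to|\nabla u_\infty|_p^p$. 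None of this machinery appears in your write-up, so as it stands the norm-convergence step is asserted rather than proved.
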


\begin{proof}
Following an idea introduced in \cite{BMP}, 
let us consider the real map $\psi(t) = t \e^{\eta t^2}$, 
where $\eta > (\frac{\beta_2}{2\beta_1})^2$ will be fixed once
$\beta_1$, $\beta_2 > 0$ are chosen in a suitable way later on. By
definition, we have that
\begin{equation}\label{eq4}
\beta_1 \psi'(t) - \beta_2 |\psi(t)| > \frac{\beta_1} 2\qquad \hbox{for all $t \in \R$.}
\end{equation}
Defining $v_{k}=u_k - u_\infty$, \eqref{weakV} implies that
\begin{equation}\label{cc2}
v_{k} \rightharpoonup 0\quad \hbox{weakly in $W^{1,p}_V(\R^N)\cap W^{1, q}_W (\R^N)$,}
\end{equation}
while from \eqref{al1}, respectively \eqref{aeRN},
it follows that
\begin{equation}\label{cc21}
v_{k} \to 0 \quad \hbox{strongly in $L^r(\R^N)$ for each $r\ge p$;} 
\end{equation}
respectively
\begin{equation}\label{cc22}
v_{k} \to 0 \quad\hbox{a.e. in $\R^N$.}
\end{equation}
Moreover, from \eqref{inLinf} and \eqref{bddX} we obtain that
\begin{equation}\label{cc23}
v_k \in X\quad \hbox{and}\quad |v_{k}|_\infty \le \bar{M}_0 \quad\hbox{for all $k \in\N$,}
\end{equation}
with $\bar{M}_0 = M_0 + |u_\infty|_\infty$.
Hence, from \eqref{cc23} we have
\begin{equation}\label{stim1}
|\psi(v_{k})| \le \psi(\bar{M}_0),\quad 0<\psi'(v_{k}) \le \psi'(\bar{M}_0) \qquad\hbox{a.e. in $\R^N$,}
\end{equation}
while \eqref{cc22} implies
\begin{equation}\label{stim2}
\psi(v_{k}) \to 0, \quad
\psi'(v_{k}) \to 1 \qquad\hbox{a.e. in $\R^N$.}
\end{equation}
Thus, since $\psi(v_k)\in X_{B_k}$, from $(iii)$ and \eqref{diff} we infer that
\begin{equation}  \label{6.40}
\begin{split}
0\ = &\left\langle d\J(u_k), \psi(v_k)\right\rangle
=\int_{\R^N} \psi^{\prime}(v_k) A(x, u_k) |\nabla u_k|^{p-2} \nabla u_k\cdot\nabla v_k dx\\
&+\int_{\R^N} \psi^{\prime}(v_k) B(x, u_k) |\nabla u_k|^{q-2} \nabla u_k\cdot\nabla v_k dx\\
& +\frac1p \int_{\R^N} A_t(x, u_k) \psi(v_k) |\nabla u_k|^p dx\\
& +\frac1q \int_{\R^N} B_t(x, u_k) \psi(v_k) |\nabla u_k|^q dx +\int_{\R^N} V(x)|u_k|^{p-2} u_k \psi(v_k) dx\\
& +\int_{\R^N} W(x)|u_k|^{q-2} u_k \psi(v_k) dx -\int_{\R^N} g(x, u_k) \psi(v_k) dx.
\end{split}
\end{equation}
We note that $(g_0)$--$(g_1)$ together with \eqref{stim1},  Hölder inequality, \eqref{cont1} \eqref{bddX}, 
 imply that
\[
\left\vert\int_{\R^N} g(x, u_k) \psi(v_k) dx\right\vert \le d_1(|u_k|_p^{p-1} |v_k|_p^p +|u_k|_s^{s-1}|v_k|_s^s)\le d_2(|v_k|_p^p +|v_k|_s^s)
\]
which, together with \eqref{cc21} ensures that
\begin{equation}\label{ip2}
\int_{\R^N} g(x,u_k) \psi(v_k) dx \to 0.
\end{equation}
Moreover \eqref{bddX}, assumptions $(h_1)$--$(h_2)$ 
and direct computations give
\[
\begin{split}
&\left\vert\int_{\R^N}A_t(x, u_k) \psi(v_{k}) |\nabla u_k|^p dx\right\vert
\le\ d_3\int_{\R^N} |\psi(v_{k})| |\nabla u_k|^{p} dx\\
&\qquad=d_3\Big(\int_{\R^N} |\psi(v_{k})| |\nabla u_k|^{p-2}\nabla u_k\cdot\nabla v_{k} dx\\
&\qquad\qquad\qquad\qquad\quad +\int_{\R^N}|\psi(v_{k})| |\nabla u_k|^{p-2}\nabla u_k\cdot\nabla u_\infty dx\Big)\\
&\qquad= d_3\int_{\R^N} |\psi(v_{k})| |\nabla u_k|^{p-2}\nabla u_k\cdot\nabla v_{k} dx +\varepsilon_k
\end{split}
\]
since by the Hölder inequality and \eqref{bddX} we have
\begin{equation} \label{tre3}
\begin{split}
&\int_{\R^N} |\psi(v_{k})| |\nabla u_k|^{p-2}\nabla u_k\cdot\nabla u_\infty dx\\
&\quad\le d_4\left(\int_{\R^N}|\psi(v_{k})|^p |\nabla u_\infty|^p dx\right)^{\frac1p}
\left(\int_{\R^N} |\nabla u_k|^p dx\right)^{\frac{p-1}{p}} \\
&\quad\le d_5\left(\int_{\R^N}|\psi(v_{k})|^p |\nabla u_\infty|^p dx\right)^{\frac1p}\
\to\ 0
\end{split}
\end{equation}
as \eqref{stim1}, \eqref{stim2} allow us to apply the Dominated Convergence Theorem which ensures that
\[
\int_{\R^N}|\psi(v_{k})|^p |\nabla u_\infty|^p dx\to 0.
\]
Same arguments can be rearranged to prove that
\begin{equation}   \label{Bt6}
\begin{split}
&\left\vert\int_{\R^N}B_t(x, u_k) \psi(v_{k}) |\nabla u_k|^q dx\right\vert\\
&\qquad\le d_6\int_{\R^N} |\psi(v_{k})| |\nabla u_k|^{q-2}\nabla u_k\cdot\nabla v_{k} dx +\varepsilon_k.
\end{split}
\end{equation}
Back to \eqref{6.40}, using estimates \eqref{ip2}--\eqref{Bt6} and setting
\[
h_k(x) =\psi^{\prime}(v_k(x))-d_7 |\psi(v_k(x))| \quad\mbox{ for a.e. } x\in\R^N
\]
with $d_7 =\max\left\{\frac{d_3}{p}, \frac{d_6}{q}\right\}$, we obtain that
\begin{equation}\label{epsquasi}
\begin{split}
\varepsilon_k &\ge \int_{\R^N} A(x, u_k) h_k(x) |\nabla u_k|^{p-2}\nabla u_k\cdot\nabla v_k dx\\
&\quad +\int_{\R^N} B(x, u_k) h_k(x) |\nabla u_k|^{q-2}\nabla u_k\cdot\nabla v_k dx\\
&\quad+\int_{\R^N} V(x)|u_k|^{p-2} u_k \psi(v_k) dx +\int_{\R^N} W(x)|u_k|^{q-2} u_k \psi(v_k) dx.
\end{split}
\end{equation}
On the other hand,  taking $\beta_1=1$ and $\beta_2 =d_7$, 
from \eqref{stim1}, \eqref{stim2} and direct computations we have that
\begin{equation} \label{stima10}
h_{k}(x) \to 1 \ \hbox{a.e. in $\R^N$} \ 
\hbox{and} \
|h_{k}(x)| \le \psi'(\bar{M}_0) + d_7|\psi(\bar{M}_0)|\ \hbox{a.e. in $\R^N$,}
\end{equation}
while from \eqref{eq4} we deduce
\begin{equation} \label{frac12}
h_{k}(x)>\frac12\quad\mbox{ a.e. in $\R^N$}.
\end{equation}
Back to \eqref{epsquasi}, direct computations give
\begin{equation}\label{quasi2}
\begin{split} 
\varepsilon_{k}\ \ge\ & +\int_{\R^N}(h_k A(x, u_k) -A(x, u_{\infty}))|\nabla u_{\infty}|^{p-2}\nabla u_{\infty}\cdot\nabla v_k \,dx\\
&+\int_{\R^N} h_k A(x, u_k)(|\nabla u_k|^{p-2}\nabla u_k -|\nabla u_{\infty}|^{p-2}\nabla u_{\infty})\cdot\nabla v_k\, dx\\
& +\int_{\R^N}(h_k B(x, u_k) -B(x, u_{\infty}))|\nabla u_{\infty}|^{q-2}\nabla u_{\infty}\cdot\nabla v_k\, dx\\
&+\int_{\R^N} h_k B(x, u_k)(|\nabla u_k|^{q-2}\nabla u_k -|\nabla u_{\infty}|^{q-2}\nabla u_{\infty})\cdot\nabla v_k\, dx\\
&+\int_{\R^N} V(x)(|u_k|^{p-2} u_k -|u_{\infty}|^{p-2} u_{\infty})\psi(v_k) dx\\
& +\int_{\R^N} V(x) |u_{\infty}|^{p-2} u_{\infty} \psi(v_k)\,dx\\
&+\int_{\R^N} W(x)(|u_k|^{q-2} u_k -|u_{\infty}|^{q-2} u_{\infty})\psi(v_k) dx\\
& +\int_{\R^N} W(x) |u_{\infty}|^{q-2} u_{\infty} \psi(v_k)\,dx
\end{split}
\end{equation}
where we have used \eqref{cc2} to infer that
\[ 
\begin{split}
\int_{\R^N} A(x,u_\infty) |\nabla u_\infty|^{p-2}\nabla u_\infty \cdot\nabla v_{k}dx\ \to\ 0,\\[3pt]
 \int_{\R^N} B(x,u_\infty) |\nabla u_\infty|^{q-2}\nabla u_\infty \cdot\nabla v_{k}dx\ \to\ 0.
\end{split}
\]
On the other hand, from Hölder's inequality, we obtain
\begin{equation} \label{hkDC}
\begin{split}
&\int_{\R^N} |(h_k A(x, u_k) -A(x,u_\infty))|\nabla u_\infty|^{p-2}\nabla u_\infty\cdot\nabla v_{k}| dx\\
&\quad\le\left(\int_{\R^N}|h_{k} A(x, u_k) -A(x,u_\infty)|^{\frac{p}{p-1}}|\nabla u_\infty|^p dx\right)^{\frac{p-1}{p}}
\|v_{k}\|_p\ \to\ 0,
\end{split}
\end{equation}
since assumption $(h_0)$, \eqref{aeRN} and \eqref{stima10} imply that 
\[
h_{k}A(x,u_k) -A(x,u_\infty) \to 0 \quad \hbox{a.e. in $\R^N$,}
\]
while \eqref{bddX}, \eqref{stima10}, \eqref{inLinf} and $(h_1)$ give 
\[
|h_{k} A(x, u_k) -A(x,u_\infty)|^{\frac{p}{p-1}}|\nabla u_\infty|^p\le d_8 |\nabla u_\infty|^p
\quad \hbox{a.e. in $\R^N$.}
\]
Thus \eqref{hkDC} follows by \eqref{cc21} and the Dominated Convergence Theorem.\\
Moreover, we have
\begin{equation}\label{four}
\begin{split}
&\int_{\R^N} V(x) |u_{\infty}|^{p-2} u_{\infty} \psi(v_k)dx\to 0\\
& \int_{\R^N} W(x) |u_{\infty}|^{q-2} u_{\infty} \psi(v_k)dx\to 0.
\end{split}
\end{equation}
In fact, via the identification
\[
\varphi\in L_V^p(\R^N)\mapsto\int_{\R^N} V(x)|u_{\infty}|^{p-2} u_{\infty} \, \varphi\, dx\in\R
\]
and Hölder's inequality, we obtain that
\[
\begin{split}
\left\vert \int_{\R^N} V(x)|u_{\infty}|^{p-2} u_{\infty} \, \varphi\, dx\right\vert &\le\left(\int_{\R^N} V(x)|\varphi|^p dx\right)^{\frac1p}\left(\int_{\R^N} V(x) |u_{\infty}|^p dx\right)^{\frac{p-1}{p}}\\[3pt]
&\le d_9 \|\varphi\|_{V}^{p-1},
\end{split}
\]
i.e. $V(x)|u_{\infty}|^{p-2} u_{\infty} \in (L_V^p(\R^N))^{\prime}$. Since the previous argument can be rephrased to prove that $W(x)|u_{\infty}|^{q-2} u_{\infty} \in (L_W^q(\R^N))^{\prime}$, we deduce from \eqref{cc2} that \eqref{four} holds.\\
Finally, summming up, from \eqref{frac12}--\eqref{four}, assumption $(P_1)$, the definition of $\psi(v_k)$ and strong convexity of the power function with exponent $p, q>1$, we have
\[
\begin{split}
\varepsilon_k &\ge \frac{\alpha_0}{2}\int_{\R^N} \big[(|\nabla u_k|^{p-2}\nabla u_k -|\nabla u_\infty|^{p-2}\nabla u_\infty)\\
&\qquad\qquad\qquad +(|\nabla u_k|^{q-2}\nabla u_k -|\nabla u_\infty|^{q-2}\nabla u_\infty)\big]\cdot\nabla v_{k} dx\\
&+\int_{\R^N} V(x)(|u_k|^{p-2} u_k -|u_{\infty}|^{p-2} u_{\infty}) v_k dx\\
&\qquad\qquad\qquad +\int_{\R^N} W(x)(|u_k|^{q-2} u_k -|u_{\infty}|^{q-2} u_{\infty}) v_k dx\ge 0.
\end{split}
\]
Hence we infer that both
\[
\begin{split}
&\int_{\R^N} (|\nabla u_k|^{p-2}\nabla u_k -|\nabla u_\infty|^{p-2}\nabla u_\infty)\cdot\nabla(u_k-u_{\infty}) dx\to 0,\\
&\int_{\R^N}(|\nabla u_k|^{q-2}\nabla u_k 
-|\nabla u_\infty|^{q-2}\nabla u_\infty)\cdot\nabla (u_k -u_{\infty}) dx\to 0
\end{split}
\]
and
\[
\begin{split}
&\int_{\R^N} V(x)(|u_k|^{p-2} u_k -|u_{\infty}|^{p-2} u_{\infty})(u_k-u_{\infty}) dx \to 0, \\
&\int_{\R^N} W(x)(|u_k|^{q-2} u_k -|u_{\infty}|^{q-2} u_{\infty})(u_k-u_{\infty}) dx\to 0.
\end{split}
\]
Thus \eqref{strongwpr} holds.
\end{proof}

\begin{proposition}\label{critt}
One has
\begin{equation}\label{crit0}
\langle d\J(u_\infty),\varphi\rangle = 0 \quad \hbox{for all $\varphi \in C_c^\infty(\R^N)$}
\end{equation}
with $C_c^\infty(\R^N) = \{\varphi \in C^\infty(\R^N):\ \supp\varphi\subset\subset \R^N\}$.
Hence, $d\J(u_\infty)= 0$ in $X$.  
\end{proposition}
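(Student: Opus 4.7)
The strategy is to pass to the limit $k\to+\infty$ in the identity $\langle d\J(u_k),\varphi\rangle = 0$ provided by property $(iii)$, and then to upgrade from $C_c^\infty(\R^N)$ to the full space $X$ by an approximation argument.

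For the first step, fix $\varphi\in C_c^\infty(\R^N)$ and choose $k_0\in\N$ so that $\supp\varphi\subset B_k$ for every $k\ge k_0$; then $\varphi\in X_{B_k}$ and Remark \ref{rem00} combined with $(iii)$ yields $\langle d\J(u_k),\varphi\rangle = 0$ for all $k\ge k_0$. All the hypotheses of Proposition \ref{C1} are satisfied along the sequence $(u_k)_k$ with limit $u_\infty$: the a.e.\ convergence \eqref{aeRN}, the strong convergence in $W_V^{1,p}(\R^N)\cap W_W^{1,q}(\R^N)$ just established in Proposition \ref{limwpr}, and the uniform $L^\infty$ bound $|u_k|_\infty\le M_0$ coming from Proposition \ref{boundX}. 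Hence $\|d\J(u_k)-d\J(u_\infty)\|_{X'}\to 0$, and evaluating at $\varphi\in X$ gives $\langle d\J(u_\infty),\varphi\rangle = \lim_{k\to +\infty}\langle d\J(u_k),\varphi\rangle = 0$, which is precisely \eqref{crit0}.

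To promote \eqref{crit0} to $d\J(u_\infty)=0$ in $X$, take any $v\in X$ and approximate it by $\varphi_n=\rho_{\varepsilon_n}\ast(\chi_n v)\in C_c^\infty(\R^N)$, where $\rho_\varepsilon$ is a standard mollifier and $\chi_n$ a smooth cutoff equal to $1$ on $B_n$. Thanks to $(P_3)$, which makes the weights $V$ and $W$ locally bounded, a routine dominated-convergence argument yields $\varphi_n\to v$ strongly in $W_V^{1,p}(\R^N)\cap W_W^{1,q}(\R^N)$, $\varphi_n\to v$ a.e.\ in $\R^N$, and $|\varphi_n|_\infty\le|v|_\infty$ for all $n$. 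Plugging $\varphi_n$ into \eqref{crit0} and letting $n\to+\infty$, the divergence-type and potential terms in \eqref{diff} converge by Hölder's inequality against the strong weighted-Sobolev convergence, while the two delicate pointwise-in-$v$ terms $\int A_t(x,u_\infty)\varphi_n|\nabla u_\infty|^p dx$ and $\int B_t(x,u_\infty)\varphi_n|\nabla u_\infty|^q dx$ pass to the limit by the Dominated Convergence Theorem, using the $L^\infty$ bounds on $A_t$, $B_t$ from $(h_1)$, the a.e.\ convergence of $\varphi_n$, the uniform bound $|\varphi_n|\le|v|_\infty$, and the $L^1$ majorants $|\nabla u_\infty|^p$ and $|\nabla u_\infty|^q$. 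This yields $\langle d\J(u_\infty),v\rangle=0$ for every $v\in X$.

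The only genuine obstacle is the density step: $C_c^\infty(\R^N)$ is not dense in $X$ in the full norm $\|\cdot\|_X$ because of the $|\cdot|_\infty$ component, so one cannot simply invoke continuity of $d\J(u_\infty)$ along $\|\cdot\|_X$. The cutoff-and-mollification scheme circumvents this by producing approximations that converge in a \emph{weaker} mode (strong weighted-Sobolev plus a.e.\ plus uniform $L^\infty$ bound), which is precisely the mode under which each summand in \eqref{diff} is continuous in its second argument. In other words, the same three-fold convergence that drives Proposition \ref{C1} is what makes the density extension go through.
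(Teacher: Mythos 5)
Your proposal is correct and follows essentially the same route as the paper: property $(iii)$ gives $\langle d\J(u_k),\varphi\rangle=0$ for $k$ large, and the three-fold convergence from Propositions \ref{boundX}, \ref{limwpr} and \eqref{aeRN} lets Proposition \ref{C1} yield $\|d\J(u_k)-d\J(u_\infty)\|_{X'}\to 0$, whence \eqref{crit0}. The only difference is that the paper leaves the final density step (``hence $d\J(u_\infty)=0$ in $X$'') implicit, whereas you supply a correct cutoff-and-mollification argument for it, rightly observing that one must exploit the mode of convergence under which each term of \eqref{diff} is continuous rather than density in the full $\|\cdot\|_X$ norm.
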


\begin{proof}
Let $\varphi \in C_c^\infty(\Omega)$. Thus, a radius $R \ge 1$ exists
such that $\supp \varphi \subset B_R$. For all $k\ge R$
we have that $\varphi \in X_k$ so $(iii)$ holds and
$\langle d\J(u_k),\varphi\rangle = 0$. 
Moreover, \eqref{bddX}, \eqref{aeRN}, Propositions \ref{limwpr} and \ref{C1}, ensure that
\[
\|d\J(u_k)-d\J(u_{\infty})\|_{X^{\prime}}\to 0\quad\mbox{ as $k\to +\infty$,}
\]
hence \eqref{crit0} holds.
\end{proof}

\begin{proof}[Proof of Theorem \ref{ExistMain}]
Thanks to Proposition \ref{critt}, the statement of Theorem \ref{ExistMain} is true 
if we prove $u_\infty \not\equiv 0$.\\ 
Suppose by contradiction that $u_\infty=0$. Thus, from assumption $(g_1)$ and \eqref{al1} 
we have that
\begin{equation}\label{gto0}
\int_{\R^N} g(x, u_k) u_k dx\ \to\ 0,
\end{equation}
which, together with assumption $(g_2)$, ensures that
\begin{equation}\label{Gto0}
\int_{\R^N} G(x, u_k) dx \to 0.
\end{equation}
Furthermore, from \eqref{diff}, $(iii)$, $(h_4)$ and also $(h_2)$ it results
\[
\begin{split}
0 &= \left\langle d\J(u_k), u_k\right\rangle =\int_{\R^N} A(x, u_k)|\nabla u_k|^p dx
 +\frac1p\int_{\R^N} A_t(x, u_k) u_k |\nabla u_k|^p dx\\
&\quad +\int_{\R^N} V(x) |u_k|^p dx +\int_{\R^N} B(x, u_k)|\nabla u_k|^q dx\\
&\quad +\frac1q\int_{\R^N} B_t(x, u_k) u_k |\nabla u_k|^q dx
 +\int_{\R^N} W(x) |u_k|^q dx-\int_{\R^N} g(x, u_k) u_k dx\\
& \ge \frac{\alpha_2\alpha_0}{p}\int_{\R^N} |\nabla u_k|^p dx+ \int_{\R^N} V(x) |u_k|^p dx + \frac{\alpha_2\alpha_0}{q}\int_{\R^N} |\nabla u_k|^q dx\\
&\quad +\int_{\R^N} W(x) |u_k|^q dx-\int_{\R^N} g(x, u_k) u_k dx,
\end{split}
\]
which implies that
\begin{equation} \label{weigh0}
\|u_k\|_V^p +\|u_k\|_W^q \to 0\quad\mbox{ as } k\to +\infty
\end{equation}
by means of \eqref{weightnorm} and \eqref{gto0}.
Hence, from \eqref{funct}, \eqref{Gto0},  \eqref{bddX}, assumption $(h_1)$,  and \eqref{weigh0} we infer 
\[
\J(u_k) \le d_1 (\|u_k\|_V^p +\|u_k\|_W^q) - \int_{\R^N} G(x, u_k) dx \to 0
\]
in contradiction with the estimate in $(ii)$. 
\end{proof}
\begin{proof}[Proof of Theorem \ref{MoltMain}]
Fix $\beta>0$. For any $k\in\N$, let $u_k$ be the critical point established via Proposition \ref{MoltOm}. Thus, the sequence $(u_k)_k\subset X$ meets conditions $(i)$--$(iii)$ and, in particular
\[
\beta\le \J(u_k)\le \beta^*,
\]
where $\beta^*$ is independent of $k$. Arguing as done above,  it follows that $u_{\infty}\in X$ exists which is a critical point for the functional $\J$. Clearly, Proposition \ref{C1} implies
\[
\J(u_k)\to \J(u_{\infty}),
\]
hence $\J(u_{\infty})\ge\beta$. Summing up, for any $\beta>0$, a critical point $u_{\infty}\in X$ exists with critical level greater or equal to $\beta$.
By the arbitrariness of $\beta$,  we obtain the claimed result.
\end{proof}

\bigbreak

\noindent{\bf \large Funding}

\medskip

\noindent The research that led to the present paper was partially supported 
by MIUR--PRIN project ``Qualitative and quantitative aspects of nonlinear PDEs'' (2017JPCAPN\underline{\ }005) and {\sl Fondi di Ricerca di Ateneo} 2017/18 ``Problemi differenziali non lineari''.\\
Both the authors are members of the Research Group INdAM--GNAMPA.

\end{document}